\newtheorem{thm}{Theorem}[section]
\newtheorem{proposition}[thm]{Proposition}
\newtheorem{lemma}[thm]{Lemma}
\newtheorem{corollary}[thm]{Corollary}
\theoremstyle{remark}
\newtheorem{rem}[thm]{\bf Remark}
\newtheorem{prob}{Problem}
\newcommand{\mbb}{\mathbb}
\newcommand{\fra}{\mathfrak}
\newcommand{\ds}{\displaystyle}
\newcommand{\N}{\mathbb{N}}
\newcommand{\Z}{\mbb{Z}}
\newcommand{\ZZ}{\{\pm I_n\}}
\newcommand{\Q}{\mbb{Q}}
\newcommand{\D}{\mathcal{D}}
\newcommand{\U}{\fra{U}}
\newcommand{\B}{\mathfrak{B}}
\newcommand{\CP}{\mathcal{P}}
\newcommand{\QM}{\mathbb{Q}^\times}
\newcommand{\KM}{K^\times}
\newcommand{\OR}{\mathcal{O}}
\newcommand{\OM}{\mathcal{O}^{\times}}
\newcommand{\RM}{R^{\times}}
\newcommand{\SM}{S^{\times}}
\newcommand{\define}{\stackrel{\text{def}}{=}}
\begin{document}
\title{On  groups elementarily equivalent to a group of triangular matrices $T_n(R)$}

\author{ Alexei Miasnikov,
 Mahmood Sohrabi\footnote{Address: Stevens Institute of Technology, Department of Mathematical Sciences, Hoboken, NJ 07087, USA. Email: msohrab1@stevens.edu }}

\maketitle
\begin{abstract}
In this paper we investigate the structure of groups elementarily equivalent to the group $T_n(R)$ of all  invertible upper triangular $n\times n$ matrices, where $n\geq 3$ and $R$ is a characteristic zero integral domain. In particular we give both necessary and sufficient conditions for a group being elementarily equivalent to $T_n(R)$ where $R$ is a characteristic zero algebraically closed field, a real closed field, a number field, or the ring of integers of a number field.  \\
{\bf 2010 MSC:} 03C60, 20F16\\
{\bf Keywords:} Elementary equivalence, Group of upper triangular matrices, Abelian deformation.
\end{abstract}

 \section{Motivation}
 Given an algebraic structure $\mathfrak{U}$ one can ask if the first-order theory of $\mathfrak{U}$  is decidable, or what are the structures (perhaps under some restrictions) which have the same first-order  theory as $\mathfrak{U}$. A. Tarski posed several problems of this nature in the 1950's. These type of problems are referred to as \emph{Tarski-type problems} or simply  \emph{Tarski problems}.

Tarski-type  problems on groups, rings, and other algebraic structures have been very inspirational  and have led to some  important developments in modern algebra and model theory. 
Usually solutions to  these   problems for a   structure $\mathfrak{U}$ clarify the most fundamental algebraic properties of $\mathfrak{U}$.   Indeed, it suffices to mention here results on first-order theories   of algebraically closed fields,  real closed fields \cite{Tarski2}, the fields of $p$-adic  numbers \cite{Ax-Kochen, Ershov}, abelian groups and modules \cite{Szmielewa, Baur}, boolean algebras \cite{Tarski3, Ershov3}, and free and hyperbolic groups \cite{KM1, KM2, Sela, Sela8}. 
 We refer the reader to~\cite{MS2010,ICM} for a brief survey of the history of Tarski-type problems in groups. 

We propose three  specific Tarski-type problems here:

\begin{prob}
Given a classical linear group $G_m(K)$ over a field $K$, where $G \in \{GL,SL,PGL,PSL, \}$ and $m \geq 2$, characterize all groups elementarily equivalent to $G_m(K)$ .
\end{prob}
\begin{prob} Given a (connected) solvable linear algebraic group $G$ characterize all groups elementarily equivalent to $G$.\end{prob}

\begin{prob} Given an arbitrary polycyclic-by-finite group $G$ characterize all groups elementarily equivalent to $G$.\end{prob}

A restricted version of Problem 1 was introduced and initially studied by Malcev, who proved that in,   the notation above,  $G_m(K_1) \equiv G_n(K_2)$ (where $K_1, K_2$ are fields of characteristic zero) if and only if  $m = n$ and $K_1 \equiv K_2$. In a series of papers Bunina and Mikhalev extended Malcev's results for other rings and groups (see \cite{BM}). In general,  not much is known  about Problem 1. The main difficulty here is that given a group, say $H$,  which is first-order equivalent to  $G_m(K)$  one has to attempt to recover the ``linear"  structure of $H$ only by first-order formulas,  and  only then  may apply Malcev's theorem. 

Problems 2 and 3 are also wide open, even though there are some relevant results. C. Lasserre and F. Oger~\cite{LO} give a criterion for elementary equivalence of two polycyclic groups.  Problem 3 is still open even in the case where $G$ is finitely generated nilpotent.  In \cite{beleg99} O. Belegradek described groups elementarily equivalent to a given nilpotent group $UT_n(\mathbb{Z})$,  and the authors of the present paper described all groups elementarily equivalent to a free nilpotent group of finite rank  \cite{MS2009, MS2010}.  In  \cite{MS2012} we  developed  techniques which seem to be useful in tackling Problem 3 in the nilpotent case. 
O. Fr\'econ~\cite{FR} considers the problem of elementary equivalence and description of abstract isomorphisms of algebraic groups, but the ground fields are always assumed to be algebraically closed, which allows one to use the technique of finite Morley rank and alike.

 This paper contributes to the study of the above problems in the following ways. Firstly, we present a framework to approach these and similar problems via  nilpotent radicals in solvable groups. Secondly, we solve these problems for  the group of all invertible $n\times n$ upper triangular matrices $T_n(R)$ over a ring $R$ (under some restrictions on $R$), which is interesting in its own right, but also  demonstrating that the approach works. Even though these results may look particular, we believe otherwise, since the groups $T_n(R)$ are typical, ``model" representatives of the groups in the problems. Besides,  the groups $T_n(R)$, as they are, play an important  part  in the study of model theory of groups $G_m(K)$ from Problem 1, in fact, this study to some extent directly depends on the understanding of model theory of groups $T_n(R)$.  

In Section~\ref{prelim:sec} we shall give a quick review of the basic notation and concepts needed to understand the main results of this paper. In Section~\ref{results:sec} we describe these main results and the structure of the rest of the paper.

\section{Preliminaries and statements of the main results}
 \label{prelim:sec}

\subsection{Preliminaries}

\subsubsection{Basic group-theoretic and ring-theoretic notation} 
For a group $G$ by $Z(G)={x\in G:xy=yx, \forall y\in G}$ we mean the center of $G$. the derived subgroup $G'$ of $G$ is the subgroup of $G$ generated by all commutators $[x,y]=x^{-1}y^{-1}xy$ of elements $x$ and $y$ of $G$. We also occasionally use $x^y$ for $y^{-1}xy$, for $x$ and $y$ in $G$.

All rings in this paper are commutative associative with unit. We denote the ring of integers by $\Z$ and the field of rationals by $\Q$. By a \emph{number field} we mean a finite extension of $\Q$. By \emph{the ring of integers $\OR$ of a number field $F$} we mean the subring of $F$ consisting of all roots of monic polynomials with integer coefficients. For a ring $R$, by $\RM$ we mean the multiplicative group of invertible (unit) elements of $R$. By $R^+$ we mean the additive group of $R$. 
    
\subsubsection{ Extensions and 2-cocycles} Assume that $A$ is an abelian group and  $B$ is a group. A function \[f:B\times B\rightarrow A\] satisfying
\begin{itemize}
 \item $f(xy,z)f(x,y)=f(x,yz)f(y,z)$, \quad $\forall x,y,z\in B,$
 \item $f(1,x)=f(x,1)=1$, $\forall x\in B$,
\end{itemize}
is called a \textit{2-cocycle}. If $B$ is abelian a 2-cocycle $f:B\times B\rightarrow A$ is \textit{symmetric} if it also satisfies the identity:
\[f(x,y)=f(y,x)\quad \forall x,y \in B.\]
By an extension of $A$ by $B$ we mean a short exact sequence of groups
\[1\rightarrow A \xrightarrow{\mu} E \xrightarrow{\nu} B \rightarrow 1,\]
where $\mu$ is the inclusion map. The extension is called \emph{abelian} if $E$ is abelian and it is called \emph{central} if $A\leq Z(E)$. A \textit{2-coboundary}\index{ $2$-coboundary} $g:B\times B \rightarrow A$ is a 2-cocycle satisfying :
 \[\psi(xy)=g(x,y)\psi(x)\psi(y), \quad \forall x,y\in B,\]
 for some function $\psi:B\rightarrow A$. One can make the set $Z^2(B,A)$ of all 2-cocycles  and the set $B^2(B,A)$ of all 2-coboundaries into
 abelian groups in an obvious way. Clearly $B^2(B,A)$
 is a subgroup of $Z^2(B,A)$. Let us set \[H^2(B,A)= Z^2(B,A)/B^2(B,A).\]
Assume $f$ is a 2-cocycle. Define a group $E(f)$ by $E(f)=B\times A$ as sets with the multiplication
\[(b_1, a_1)(b_2, a_2)=(b_1b_2, a_1a_2f(b_1,b_2)) \quad \forall a_1,a_2\in A,\forall b_1,b_2 \in B.\]
The above operation is a group operation and the resulting extension is central. It is a well known fact that there is a bijection between the equivalence classes of central
extensions of $A$ by $B$ and elements of the group $H^2(B,A)$ given by assigning $f \cdot B^2(B,A)$ the equivalence class of $E(f)$.

If $B$ is abelian $f\in Z^2(B,A)$  is symmetric if and only if it arises from an abelian extension of $A$ by $B$. As it
can be easily imagined there is a one to one correspondence between the equivalent classes of abelian extensions
and the quotient group \[Ext(B,A)=S^2(B,A)/(S^2(B,A)\cap B^2(B,A))\index{ $Ext(B,A)$},\] where $S^2(B,A)$\index{ $S^2(B,A)$} denotes the group of symmetric
2-cocycles. For further details we refer the reader to ~(\cite{robin}, Chapter 11). 

\subsubsection{Model-theoretic notation and terminology} 
Our reference for basic model theory is~\cite{hodges}.

A group $G$ is considered to be
the structure $\langle|G|,\cdot,^{-1},1\rangle$ where $\cdot$, $^{-1}$ and $1$, name multiplication, inverse operation
and the trivial element of the group respectively. We call the corresponding first-order language $L$.

Let $\mathfrak{U}$ be a structure and $\phi(x_1,\ldots, x_n)$ be a first-order formula of the signature of
$\mathfrak{U}$ with $x_1$,\ldots ,$x_n$ free variables. Let $(a_1,\ldots, a_n)\in |\mathfrak{U}|^n$. We denote
such a tuple by $\bar{a}$. The notation $\mathfrak{U} \models \phi(\bar{a})$ is intended to mean that the tuple
$\bar{a}$ satisfies $\phi(\bar{x})$ when $\bar{x}$ is an abbreviation for the tuple $(x_1,\ldots, x_n)$ of
variables. 

Given a structure $\mathfrak{U}$ and a first-order formula $\phi(x_1,\ldots, x_n)$ of the signature of
$\mathfrak{U}$, $\phi(\mathfrak{U}^n)$ refers to $\{\bar{a}\in |\mathfrak{U}|^n: \mathfrak{U}\models
\phi(\bar{a})\}$. Such a relation or set is called \textit{first-order definable without parameters} or \emph{absolutely definable}. If
$\psi(x_1,\ldots,x_n,y_1,\ldots,y_m)$ is a first-order formula of the signature of $\mathfrak{U}$ and $\bar{b}$
an $m$-tuple of elements of $\mathfrak{U}$ then $\psi(\mathfrak{U}^n,\bar{b})$ means $\{\bar{a}\in
|\mathfrak{U}|^n :\mathfrak{U}\models \psi(\bar{a},\bar{b})\}$. A set or relation like this is said to be
\textit{first-order definable with parameters}.

Now let $T$ be a theory of signature $\Delta$. Suppose that $S: Mod (T) \rightarrow K$ is a functor defined on the class $Mod (T)$ of all models of the theory $T$ (a category with isomorphisms) into a certain category $K$ of structures of signature $\Sigma$. If there exists a system of first-order formulas $\Psi$ of signature $\Delta$, which absolutely interprets the system $S(\frak{B})$ in any model $\frak{B}$ of the theory $T$ we say that $S(\frak{B})$ is \textit{absolutely interpretable in $\frak{B}$ uniformly with respect to $T$}. For a definition of interpretability see~\cite{hodges}.

Let $\mathfrak{U}$ be a structure of signature $\Sigma$. The \textit{elementary theory} $Th(\mathfrak{U})$ of the structure
$\mathfrak{U}$ is the set: \[\{\phi:\mathfrak{U}\models \phi, \phi \textrm{ a first-order sentence of signature
}\Sigma\}.\]
We say two structures $\mathfrak{U}$ and $\mathfrak{B}$ of signature $\Sigma$ are \textit{elementarily
equivalent} and write $\U\equiv \B$ if $Th(\mathfrak{U})=Th(\mathfrak{B})$.

\subsection{Statements of the main results}\label{results:sec}
The group $G=T_n(R)$ is a semi-direct product \[T_n(R)=D_n(R) \ltimes_{\phi_{n,R}} UT_n(R),\] where $D_n(R)$ is the subgroup of all diagonal matrices in $T_n(R)$, $UT_n(R)$ denotes the subgroup of all upper unitriangular matrices (i.e. upper triangular with 1's on the diagonal), and the homomorphism  $\phi_{n,R}: D_n(R) \to Aut(UT_n(R))$ describes the action of $D_n(R)$ on $UT_n(R)$ by conjugation. The subgroup $UT_n(R)$ is the so-called \emph{unipotent radical of $G$}, i.e. the subgroup consisting of all unipotent matrices in $G$, so sometimes we denote it as $G_u$. The subgroup $D_n(R)$ is a direct product $(\RM)^n$ of $n$ copies of the multiplicative group of units $\RM$ of $R$. The center $Z(G)$ of $G$ consists of diagonal scalar matrices $Z(G)=\{\alpha I_n: \alpha\in \RM\}\cong \RM$, where $I_n$ is the identity matrix. Again it is standard knowledge that $Z(G)$ is a direct factor of $D_n(R)$, i.e. there is a subgroup $B\leq D_n$ such that $D_n=B \times Z(G)$. Now we define a new group just by deforming the multiplication on $D_n$. Let $E_n=E_n(R)$ be an arbitrary abelian extension of $Z(G)\cong \RM$ by $D_n/Z(G)\cong (\RM)^{n-1}$. As it is customary in extension theory we can assume $E_n=D_n=B \times Z(G)$ as sets, while the product on $E_n$ is defined as follows:
\[(x_1,y_1)\cdot (x_2,y_2)=(x_1x_2,y_1y_2f(x_1,x_2)),\]
for a symmetric 2-cocycle $f\in S^2(B,Z(G)).$
 Now define a map $\psi_{n,R}: E_n\to Aut(UT_n(R))$ by
 \[\psi_{n,R}((x,y))\define\phi_{n,R}((x,y)),~~ (x,y)\in B \times Z(G).\]
 The definition actually makes sense since $ker(\phi_{n,R})=Z(G)$ and it is easy to verify that it is indeed a homomorphism.
Now define a  new group structure $H$ on the base set of $G$ by
 \[H\define E_n \ltimes_{\psi_{n,R}} UT_n(R).\]  We call such a group $H$ an \emph{abelian deformation of $T_n(R)$}.
 
 Indeed any abelian extension $E_n$ of $\RM$ by $(\RM)^{n-1}$, due to the fact that $ Ext((\RM)^{n-1},\RM)\cong \prod_{i=1}^{n-1} Ext(\RM,\RM)$, is uniquely determined by some symmetric 2-cocycles $f_i\in S^2(\RM,\RM)$, $i=1, \ldots , n-1$ up to equivalence of extensions. So if the $f_i$ are the defining 2-cocycles for $E_n$ above we also denote the group $H$ obtained above by $T_n(R, f_1, \ldots , f_{n-1})$ or $T_n(R,\bar{f})$. Indeed a more telling notation is to use $D_n(R,\bar{f})$ for $E_n(R)$ and therefore
 \[T_n(R,\bar{f})=D_n(R,\bar{f})\ltimes _{\psi_{n,R}}UT_n(R).\]
 
  We will study these groups in more detail in Section~\ref{abdef:sec}. 
 
We are now ready to state the main results of this paper.

\begin{thm}\label{mainthm} Let $G=T_n(R)$ be the group of invertible $n\times n$ upper triangular matrices over a characteristic zero integral domain $R$. If $H$ is an arbitrary group elementarily equivalent to $G$ then $H\cong T_n(S,f_1, \ldots, f_{n-1})$ for some ring $S\equiv R$ and symmetric 2-cocycles $f_i\in S^2(\SM,\SM)$.\end{thm}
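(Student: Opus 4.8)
The plan is to treat the statement as an \emph{interpretation} problem: I would produce one system of first-order formulas that, evaluated in any model of $T=Th(G)$, reconstructs a ring together with enough of the matrix coordinatisation of $T_n$ to determine the group up to the diagonal cocycle data, and then exploit that this interpretation is uniform with respect to $T$, so that $G\equiv H$ automatically forces the interpreted ring $S$ to satisfy $S\equiv R$. \textbf{Step 1 (the definable skeleton).} First I would pin down, by $R$-independent formulas, the subgroups carrying the structure: the center $Z(G)=\{\alpha I_n:\alpha\in\RM\}\cong\RM$ is immediate, and the Fitting subgroup $F=Z(G)\cdot UT_n(R)$ is the crucial one, since a direct computation shows the maximal nilpotent normal subgroup of $T_n(R)$ is exactly $Z(G)\cdot UT_n(R)$ (any diagonal element with two distinct entries acts non-unipotently and so cannot lie in a nilpotent normal subgroup). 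Recovering $F$ by a first-order formula (which requires expressing Fitting-subgroup membership by a bounded system of commutator identities) is a genuine technical point; granting it, $UT_n(R)\cong F/Z(G)$ becomes interpretable and $G/F\cong D_n(R)/Z(G)\cong(\RM)^{n-1}$ acts on it. In $H$ the same formulas give a normal nilpotent $F_H$ with $Z(H)$ inside it and $F_H/Z(H)\equiv UT_n(R)$.

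\textbf{Step 2 (interpreting the ring).} Inside $U:=F/Z(G)\cong UT_n(R)$ I would interpret $R$ in the classical Mal'cev manner: successive terms of the lower central series cut out copies of $R^+$, and the commutator pairing supplies multiplication. The hypothesis $n\geq 3$ is used precisely here, as it guarantees nilpotency class at least $2$ so that the pairing is nontrivial and multiplication is definable. This interprets a ring uniformly in $T$; carried into $H$ it yields a ring $S$, and uniformity gives $S\equiv R$. Along the way the recovered action of $D_n/Z(G)$ grades $U$ into the root subgroups $e_{ij}(R)\cong R^+$, each of which I would check is definable and matches the interpreted $S^+$.

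\textbf{Step 3 (rigidity of the unipotent part --- the main obstacle).} The subtle claim is that $F_H/Z(H)\cong UT_n(S)$ \emph{exactly}, with no cocycle deformation, even though a nilpotent group merely elementarily equivalent to $UT_n(R)$ would in general only be an abelian deformation of $UT_n(S)$. I expect this non-deformation to be the heart of the proof. The mechanism I would use is the torus action recovered in Step~2: the diagonal acts on the root subgroups with \emph{distinct weights} $d_i/d_j$, so the weight decomposition of $U$ is definable, every relevant extension among root subgroups splits, and the full matrix-entry coordinate system --- not merely the associated graded --- becomes definable. Once all entries and the multiplication formula are definable over $S$, the group reassembles as genuine $UT_n(S)$, which removes the cohomological freedom. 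Establishing this cleanly, and showing that the diagonal action does not introduce hidden twists into $U$, is where the ambient linear structure must be shown to overcome the flexibility nilpotent groups enjoy in isolation.

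\textbf{Step 4 (the diagonal deformation and assembly).} Finally I would analyse $D_H:=H/F_H$ together with $Z(H)$. Here the situation is opposite to Step~3: $D_H$ is abelian and the torus acts trivially on itself, so nothing splits the extension $1\to Z(H)\to (\text{diagonal part of }H)\to (\SM)^{n-1}\to 1$. Having identified $Z(H)\cong\SM$ and the quotient $\cong(\SM)^{n-1}$, this is an arbitrary abelian extension of $\SM$ by $(\SM)^{n-1}$, classified by $Ext((\SM)^{n-1},\SM)\cong\prod_{i=1}^{n-1}Ext(\SM,\SM)$, hence by symmetric $2$-cocycles $f_1,\ldots,f_{n-1}\in S^2(\SM,\SM)$. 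Matching the recovered action with $\psi_{n,S}$ and combining with Step~3 then assembles $H\cong D_n(S,\bar f)\ltimes_{\psi_{n,S}}UT_n(S)=T_n(S,\bar f)$, which is the desired conclusion.
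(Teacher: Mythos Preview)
Your proposal is correct and follows the same route as the paper: pin down the Fitting subgroup, interpret the ring \`a la Belegradek in the unipotent part, use the diagonal action to kill the quasi-unitriangular cocycles on the unipotent side (this is indeed the heart of the argument, exactly as you flag in Step~3), and then read off the residual $f_i$ from the abelian extension on the diagonal side. The paper's implementation is slightly more concrete --- the torus action is realised by the involutions $d_i=d_i(-1)$ (always available since $-1\in\RM$) and the splitting in Step~3 by the identity $[d_2,y]=t_{12}(\alpha)^2$, while the diagonal complement in Step~4 is obtained as the centraliser of the $d_i$ rather than as a quotient --- but the architecture is identical.
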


The above theorem gives a necessary condition for a group $H$ to be elementarily equivalent to $T_n(R)$. As for sufficient condition(s) we need to first define a specific type of 2-cocycles. Given a ring $R$ as in the statement of the theorem above a symmetric 2-cocycle $f:\RM\times \RM \to \RM$ is said to be \emph{coboundarious on torsion} or $\emph{CoT}$ if the restriction $g:T\times T \to \RM$, where $T=T(\RM)$, of $f$ to $T\times T$ is a 2-coboundary.

\begin{thm} \label{main(improved):thm} Assume $R$ is an integral domain of characteristic zero where the maximal torsion subgroup $T(\RM)$ of $\RM$ is finite. Then for a group $H$
\[T_n(R)\equiv H \Leftrightarrow H\cong T_n(S, \bar{f}),\] 
for some ring $S\equiv R$ and some CoT 2-cocycles $f_i\in S^2(\SM,\SM)$, $i=1, \ldots ,n-1$.  \end{thm}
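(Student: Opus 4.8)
The plan is to prove both implications, leaning on the structural description already secured in Theorem~\ref{mainthm} and isolating exactly the role of the hypothesis that $T(\RM)$ is finite. The guiding principle is that the first-order theory of a deformation $T_n(S,\bar f)$ ought to depend only on $Th(S)$ together with the restriction of each class $[f_i]\in Ext(\SM,\SM)$ to the finite torsion subgroup $T=T(\SM)$; everything in the torsion-free part of the cocycle should be elementarily invisible. The CoT condition says precisely that these torsion restrictions vanish, which is why it is the exact dividing line.

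For the forward implication I would start from Theorem~\ref{mainthm}, which already gives $H\cong T_n(S,\bar f)$ with $S\equiv R$ and symmetric $f_i$; the only thing left is to show each $f_i$ is CoT. Here I would use that finiteness of $T(\RM)$ is first-order (the number of solutions of $x^m=1$ is bounded and fixed), so the finite group of torsion units, sitting inside the center $Z(G)\cong\RM$, is absolutely definable uniformly, and so is the finite central extension $1\to T(Z)\to \Theta\to \Theta/T(Z)\to 1$ obtained by restricting the diagonal to its torsion. Being a finite definable configuration, its isomorphism type is captured by a single first-order sentence. In $T_n(R)$ the diagonal is a genuine direct product, so this extension splits; elementary equivalence then forces it to split in $H\cong T_n(S,\bar f)$ as well, which is exactly the assertion that $f_i\!\mid_{T\times T}$ is a coboundary, i.e.\ CoT.

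For the converse I would reduce to two claims: (a) $T_n(S)\equiv T_n(R)$, and (b) $T_n(S,\bar f)\equiv T_n(S)$ whenever the $f_i$ are CoT. Claim (a) is immediate from the uniform absolute interpretability of $T_n$ in the ring, so that $S\equiv R$ transfers to the matrix groups. For (b) I would first normalize, replacing each $f_i$ by a cohomologous cocycle with $f_i\!\mid_{T\times T}=1$ (possible because CoT makes the torsion restriction a coboundary, and changing $f_i$ within its class alters $T_n(S,\bar f)$ only up to isomorphism). I would then pass to sufficiently saturated elementary extensions and show that the deformed diagonal becomes isomorphic to the direct product: by the exact sequence attached to $0\to T\to\SM\to\SM/T\to 0$, CoT places $[f_i]$ in the image of $Ext(\SM/T,\SM)$, and in the saturated model the torsion-free quotient $\SM/T$ is either free (the number-ring situation, where $Ext(\SM/T,\SM)=0$) or divisible (the field situation, where $\SM$ is divisible modulo torsion, hence injective enough that the relevant $Ext$ again vanishes). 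In either case $[f_i]$ dies, the deformed diagonal is carried isomorphically onto $D_n(S)$ by a map that is the identity modulo the center and compatible with the $UT_n$-action, and this isomorphism extends to the whole semidirect product, yielding $T_n(S,\bar f)\equiv T_n(S)$.

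The hard part will be step (b) in full generality, namely proving that the torsion-free component of the cocycle is genuinely invisible to first-order logic. The delicate point is that $\SM/T$ need not become divisible in an arbitrary elementary extension—ultrapowers of a number ring produce a nonstandard copy of $\Z$, which is neither divisible nor obviously free—so one cannot simply ``divide away'' the torsion-free part. I expect the clean route is a Feferman--Vaught-type reduction: show that $Th\bigl(T_n(S,\bar f)\bigr)$ is uniformly interpreted in the ring $S$ enriched only by the finite datum $\mathrm{res}_T[\bar f]$, so that any two deformations with the same torsion restriction are elementarily equivalent. Granting such a reduction, CoT (torsion restriction $=0$, as for the undeformed group) forces $T_n(S,\bar f)\equiv T_n(S)\equiv T_n(R)$; producing this interpretation, which must discard the torsion-free cohomology while retaining the finite torsion extension class, is the main obstacle.
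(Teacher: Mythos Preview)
Your forward direction is essentially the paper's argument: finiteness of $T(\RM)$ lets you write a single sentence asserting that the preimage of the torsion of $\Delta_i/Z(G)$ splits over $Z(G)$, and this transfers to $H$, forcing each $f_i$ to be CoT. That part is fine.

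The gap is in your converse, and you have correctly located it yourself. Your plan is to pass to a saturated (or ultrapower) model and argue that the torsion-free quotient $B^*=(\SM)^*/T$ becomes ``free or divisible,'' making the relevant $Ext$ vanish. As you note, this is false for ultrapowers of a number ring: $B^*$ is a nonstandard model of the theory of finitely generated free abelian groups, hence neither free nor divisible. Your fallback, a Feferman--Vaught-style interpretation that strips the torsion-free cohomology, is not carried out and would be substantially harder than what is actually needed.

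The missing idea is pure-injectivity. An $\aleph_1$-saturated abelian group is pure-injective (Eklof--Fischer), so in the ultrapower $B^*$ is a pure-injective group. On the other hand, in any abelian extension of $B^*$ by $B^*$ the image of $B^*$ is pure, simply because the quotient is torsion-free. Hence $B^*$ splits off and $Ext(B^*,B^*)=0$. Combined with $Ext(B^*,T)=0$ (torsion-free by finite) and the CoT hypothesis (which kills the $T\times T$ and $T\times B^*$ pieces), every component of $f_i^*$ is a coboundary, so $T_n(S,\bar f)^*\cong T_n(S^*,\bar{f^*})\cong T_n(S^*)\cong T_n(S)^*$, and Keisler--Shelah finishes. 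This replaces your unexecuted Feferman--Vaught step with a one-line appeal to a classical theorem about saturated abelian groups; once you have it, the rest of your outline goes through exactly as written.
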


Since any number field or any ring of integers of a number field satisfies the hypotheses of Theorem~\ref{main(improved):thm}, the following result is immediate.

\begin{corollary}\label{CoT:cor} Assume $R$ is a number field or the ring of integers of a number field. Then $H\equiv T_n(R)$ if and only if $H\cong T_n(S,\bar{f})$ for some ring $S\equiv R$, where each $f_i$ is  CoT.\end{corollary}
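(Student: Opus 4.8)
The plan is to derive Corollary~\ref{CoT:cor} directly from Theorem~\ref{main(improved):thm}; since the latter already supplies the equivalence for every integral domain of characteristic zero whose unit group has finite torsion, all that remains is to check that a number field $F$, as well as the ring of integers $\OR$ of a number field, fall under that hypothesis. First I would record the trivial part: a number field is a field, hence an integral domain, and it has characteristic zero since it contains $\Q$; the ring of integers $\OR$ is a subring of $F$, so it is again an integral domain of characteristic zero. Thus in both cases $R$ is a characteristic zero integral domain, and only the finiteness of $T(\RM)$ needs justification.

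The substance of the argument is the finiteness of the torsion subgroup. In a commutative domain the torsion subgroup of the unit group consists precisely of the roots of unity it contains, so $T(\FM)$ equals the group $\mu(F)$ of roots of unity lying in $F$. I would argue that $\mu(F)$ is finite by the standard degree bound: if $F$ contains a primitive $m$-th root of unity $\zeta_m$, then $\Q(\zeta_m)\subseteq F$, whence $\varphi(m)=[\Q(\zeta_m):\Q]$ divides $[F:\Q]$; since $[F:\Q]$ is finite and $\varphi(m)\to\infty$, only finitely many $m$ can occur, and therefore $\mu(F)$ is finite (indeed cyclic). For the ring of integers the key point is that every root of unity is an algebraic integer, so the torsion units of $\OM$ are exactly the roots of unity in $F$; that is, $T(\OM)=\mu(F)$, which we have just shown to be finite.

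With both hypotheses verified, the corollary is immediate: applying Theorem~\ref{main(improved):thm} with $R$ taken to be $F$ or $\OR$ yields that $H\equiv T_n(R)$ if and only if $H\cong T_n(S,\bar{f})$ for some ring $S\equiv R$ with each $f_i$ being CoT. I expect essentially no obstacle here beyond the classical finiteness-of-roots-of-unity fact recalled above; the genuine mathematical work is entirely contained in Theorem~\ref{main(improved):thm}, and the role of this corollary is only to exhibit two natural families of rings to which that theorem applies.
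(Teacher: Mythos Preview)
Your proposal is correct and follows exactly the paper's approach: the paper states that the corollary is immediate from Theorem~\ref{main(improved):thm} once one observes that number fields and their rings of integers satisfy its hypotheses, and you simply supply the standard verification (finiteness of roots of unity via the degree bound $\varphi(m)\mid [F:\Q]$) that the paper leaves implicit.
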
 

In case that $R$ is a characteristic zero algebraically closed field or a real closed field the introduction of abelian deformations is not necessary.

\begin{thm}\label{charthm-alg-realclosed:thm} Assume $F$ is a characteristic zero algebraically closed field or a real closed field. Then
 \[ H\equiv T_n(F) \Leftrightarrow H\cong T_n(K),\] 
for some field  $K\equiv F$.\end{thm}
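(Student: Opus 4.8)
The plan is to derive the theorem from the two necessary conditions already established (Theorems~\ref{mainthm} and~\ref{main(improved):thm}) by showing that, for these two classes of fields, the abelian deformation produced there is forced to be trivial, so that no genuine deformation survives. The easy ($\Leftarrow$) direction I would handle by noting that $T_n(\cdot)$ is absolutely interpretable in its coefficient ring uniformly (upper-triangularity and invertibility are absolutely definable conditions on the matrix entries), so $K\equiv F$ as fields immediately yields $T_n(K)\equiv T_n(F)$; hence any $H\cong T_n(K)$ with $K\equiv F$ satisfies $H\equiv T_n(F)$.

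For the ($\Rightarrow$) direction, assume $H\equiv T_n(F)$. By Theorem~\ref{mainthm}, $H\cong T_n(S,\bar f)$ for a ring $S\equiv F$ and symmetric $2$-cocycles $f_i\in S^2(\SM,\SM)$. Since ``characteristic zero algebraically closed field'' and ``real closed field'' are each captured by a (scheme of) first-order axioms in the language of rings, $K:=S$ is again a field of the same type with $K\equiv F$. It then suffices to show that each class $[f_i]\in Ext(\KM,\KM)$ is trivial: if so, the extension $E_n=D_n(K,\bar f)$ is equivalent to the split extension, and because the action $\psi_{n,K}$ factors through $D_n/Z(G)$, a compatible splitting induces an isomorphism $T_n(K,\bar f)\cong T_n(K)$, as required.

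The core of the proof is therefore a computation of $Ext(\KM,\KM)$. \textbf{Algebraically closed case.} Every element of $K$ has $m$-th roots for all $m$, so $\KM$ is divisible, hence injective, and $Ext(A,\KM)=0$ for all $A$; in particular every $[f_i]=0$. Here CoT plays no role and is in fact unavailable, since $T(\KM)$, the group of all roots of unity, is infinite, so Theorem~\ref{main(improved):thm} does not apply and one must invoke Theorem~\ref{mainthm} directly. \textbf{Real closed case.} Now $T(\KM)=\{\pm1\}\cong\Z/2\Z$ is finite, so Theorem~\ref{main(improved):thm} applies and the $f_i$ may be taken CoT. Writing $\KM=T\oplus V$ with $T=\{\pm1\}$ and $V=K^{>0}$ a torsion-free divisible group (a $\Q$-vector space), I would use additivity of $Ext$ in each variable together with $Ext(A,V)=0$ (injectivity of $V$) and $Ext(\Q,\Z/2\Z)=0$ (a standard computation, yielding $Ext(V,T)=0$) to conclude that the natural map $Ext(\KM,\KM)\to Ext(T,T)$, given by restriction along $T\hookrightarrow\KM$ and the projection $\pi_T:\KM\to T$, is an isomorphism. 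The CoT hypothesis says precisely that $f_i|_{T\times T}$ is a coboundary into $\KM$; post-composing with $\pi_T$ keeps it a coboundary, so the image of $[f_i]$ in $Ext(T,T)$ vanishes, whence $[f_i]=0$ and again $T_n(K,\bar f)\cong T_n(K)$.

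The hard part will be the real closed case: in contrast with the algebraically closed one, $Ext(\KM,\KM)\cong Ext(\Z/2\Z,\Z/2\Z)\cong\Z/2\Z$ is nonzero, so nontrivial abelian deformations of $T_n(K)$ genuinely exist and divisibility alone does not suffice. The delicate point is to verify that every contribution to $Ext(\KM,\KM)$ other than the purely torsion one vanishes, and that the CoT condition annihilates exactly the surviving torsion term; keeping the $Ext$ decomposition aligned with the cocycle restriction $f_i\mapsto \pi_T\circ f_i|_{T\times T}$ is where the care is needed.
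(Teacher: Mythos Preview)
Your argument is correct. The algebraically closed case and the $\Leftarrow$ direction match the paper's treatment. For the real closed case, however, you take a genuinely different route from the paper.

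You invoke Theorem~\ref{main(improved):thm} to obtain CoT cocycles, then compute $Ext(\KM,\KM)\cong Ext(T,T)$ via additivity and the vanishing results $Ext(-,V)=0$ (divisibility) and $Ext(V,T)=0$ (Lemma~\ref{tbytf:lem}), and finally observe that CoT annihilates precisely the surviving summand $Ext(T,T)$. The paper instead stays with Theorem~\ref{mainthm} alone and argues directly by first-order definability: it notes that in $G=T_n(F)$ the subgroup $\Delta_i$ admits the definable direct decomposition $\Delta_i=\{x:x^2=1\}\times\Delta_i^2$ (2-torsion times squares), and since this decomposition is expressible by an $L$-sentence, it transfers to $\Lambda_i$ in $H$, exhibiting the splitting of $A_1$ explicitly. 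The paper never appeals to CoT or to the ultraproduct machinery behind Theorem~\ref{main(improved):thm} for this case.

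Your approach is cleaner homologically and explains conceptually why no deformation survives: the only obstruction sits in $Ext(\Z/2,\Z/2)$, and CoT is exactly the hypothesis that kills it. The paper's approach is more elementary in that it avoids the Keisler--Shelah argument underlying Theorem~\ref{main(improved):thm} and instead produces a concrete definable complement; it also makes the $\Leftarrow$ direction immediate without needing to check that the trivial cocycle is CoT. Both are valid, and your identification of the delicate point (that $Ext(\KM,\KM)\neq 0$ here, so one genuinely needs CoT rather than mere divisibility) is exactly right.
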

 As for the necessity of introducing abelian deformations we prove the following theorems. 

\begin{thm}\label{elemnotiso-Q:thm} There is a countable field $K$, $K\equiv \Q$ and there are some $f_i\in S^2(\KM,\KM)$ such that $T_n(\Q)\equiv T_n(K, \bar{f})$ but $T_n(K, \bar{f})\ncong T_n(K')$ for any field $K'$.\end{thm}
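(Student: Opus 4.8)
My plan is to separate a genuine abelian deformation from every honest triangular group by a single isomorphism invariant of the abelianization. Since any isomorphism $T_n(K,\bar f)\cong T_n(K')$ forces $T_n(K')\equiv T_n(\Q)$ and hence $K'\equiv\Q$, it suffices to rule out fields $K'\equiv\Q$. Over a field the derived subgroup of $T_n$ is exactly $UT_n$, and an abelian deformation alters only the abelian diagonal $E_n=D_n(K,\bar f)$ while retaining the action $\psi_{n,R}=\phi_{n,R}$; consequently $[T_n(K,\bar f),T_n(K,\bar f)]=UT_n$ and $T_n(K,\bar f)^{ab}\cong E_n$, whereas $T_n(K')^{ab}\cong ((K')^{\times})^n$. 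As abelianization is functorial, the theorem reduces to exhibiting a countable $K\equiv\Q$ and CoT cocycles $f_1,\dots,f_{n-1}$ with
\[ E_n\ncong ((K')^{\times})^n\quad\text{as abelian groups, for every }K'\equiv\Q. \]

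The invariant I would use is the $\Q$-dimension of the maximal divisible subgroup. For every field $K'\equiv\Q$ the torsion subgroup of $(K')^{\times}$ is $\{\pm1\}$ (for $m\geq 3$ the sentence ``$\Phi_m$ has a root'' fails in $\Q$, and ``$-1$ is a square'' fails in $\Q$, so both fail in $K'$), whence $\mathrm{div}((K')^{\times})$ is torsion-free and $\dim_\Q\mathrm{div}(((K')^{\times})^n)=n\cdot\dim_\Q\mathrm{div}((K')^{\times})$. Thus for $n\geq 2$ the divisible rank of any group $((K')^{\times})^n$ lies in $\{0,n,2n,\dots\}\cup\{\aleph_0\}$, and in particular is never equal to $1$. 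It therefore suffices to construct $K$ and $\bar f$ so that $\dim_\Q\mathrm{div}(E_n)=1$.

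For the construction I would first secure a countable $K\equiv\Q$ whose multiplicative group has maximal divisible subgroup of rank exactly one, $\mathrm{div}(\KM)=\Delta_0\cong\Q$; by compactness there is an elementary extension of $\Q$ containing an element with compatible $m$-th roots for all $m$, and one descends to a countable model in which the divisible rank is held at $1$. Then $A:=Z(G)\cong\KM$ has $\mathrm{div}(A)=\Delta_0$ and $B:=D_n/Z(G)\cong(\KM)^{n-1}$ has $\mathrm{div}(B)=\Delta_0^{\,n-1}$. Since the restriction $Ext(\KM/\{\pm1\},\KM)\to Ext(\Delta_0,\KM)$ (using $\Delta_0\hookrightarrow\KM/\{\pm1\}$) is onto the $\Q$-vector space $Ext(\Delta_0,\KM)$, I can pick symmetric $2$-cocycles $f_1,\dots,f_{n-1}$ that (i) are inflated from $\KM/\{\pm1\}$, hence restrict to the trivial cocycle on $\{\pm1\}\times\{\pm1\}$ and are CoT, and (ii) restrict to $\Q$-linearly independent classes in $Ext(\Delta_0,\KM)$. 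By (ii) every nonzero line of $\mathrm{div}(B)$ has nonzero restricted extension class, so no nonzero divisible subgroup of $\mathrm{div}(B)$ lifts to $E_n$; since $\mathrm{div}(A)=\Delta_0$ does lift, a short diagram chase gives $\mathrm{div}(E_n)=\Delta_0$, of rank $1$. As the $f_i$ are CoT and $K\equiv\Q$ has finite multiplicative torsion, Theorem~\ref{main(improved):thm} yields $T_n(K,\bar f)\equiv T_n(\Q)$, while the divisible-rank computation gives $E_n\ncong((K')^{\times})^n$ for all $K'\equiv\Q$; combining this with the abelianization reduction finishes the proof.

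The main obstacle is the first step of the construction: producing a field $K\equiv\Q$ with multiplicative divisible rank exactly $1$ (more generally, finite and not a multiple of $n$). Realizing a single infinitely divisible element by a type is immediate, but one must argue that the chosen countable model acquires no further independent divisible elements — e.g. by taking an atomic model over the realized element, or a carefully controlled direct limit — and simultaneously verify that $Ext(\Delta_0,\KM)$ still carries $n-1$ linearly independent classes available as CoT cocycles. I expect the delicate bookkeeping to lie entirely in pinning the divisible rank, the remainder being formal.
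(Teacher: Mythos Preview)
Your approach via the divisible rank of the abelianization is genuinely different from the paper's, and the gap you yourself flag is real and not minor. The paper avoids any delicate control of $\mathrm{div}(\KM)$ entirely: Lemma~\ref{abdef-notiso:lem} shows that for characteristic zero integral domains, \emph{any} isomorphism $T_n(R,\bar f)\to T_n(S)$ forces $R\cong S$ and every $f_i$ to be a $2$-coboundary. Hence for the theorem it suffices to exhibit a countable $K\equiv\Q$ and a single CoT cocycle $f\in S^2(\KM,\KM)$ that is not a coboundary. Lemma~\ref{Z-inter-Q*:lem} supplies exactly this, producing $K$ with $\KM\cong\{\pm1\}\times A\times D$ where $A$ is nontrivial reduced torsion-free and $D$ nontrivial torsion-free divisible; a nonzero class in $Ext(D,A)$ inflates to a non-coboundary in $Ext(\KM,\KM)$ that is automatically CoT (its restriction to $\{\pm1\}\times\{\pm1\}$ is trivial). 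One sets $f_1=f$ and the remaining $f_i$ trivial, and Lemma~\ref{abdef-notiso:lem} finishes.

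Your plan, by contrast, needs a field $K\equiv\Q$ with $\dim_\Q\mathrm{div}(\KM)$ \emph{exactly} $1$, together with $n-1$ $\Q$-independent classes in $Ext(\Q,\KM)$ realizable as restrictions of CoT cocycles on $\KM$. Realizing one infinitely divisible element by a type is routine; pinning the divisible part at one dimension in a countable model is not. The mechanisms you suggest (an atomic model over the parameter, or a controlled direct limit) presuppose tameness of $Th(\Q)$ --- existence of prime models over sets, or an omitting-types argument that simultaneously kills every other divisible line in $\KM$ --- for which you give no justification, and which are far from clear for this undecidable theory. Note also that your reduction ``$T_n(K')\equiv T_n(\Q)$ hence $K'\equiv\Q$'' already uses the bi-interpretability of the ring with the group; once you grant yourself that much machinery, the paper's route through Lemma~\ref{abdef-notiso:lem} is both shorter and complete, and requires no control whatsoever on the size of $\mathrm{div}(\KM)$.
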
 

\begin{thm}\label{charthm-ringofinter:thm} Assume $\OR$ is the ring of integers of an algebraic number field.
\begin{enumerate}
\item If $\OM$ is finite, then a group $H$ is elementarily equivalent to $T_n(\OR)$ if and only if $H\cong T_n(R)$ for some ring $R\equiv \OR$.
\item If $\OM$ is infinite, then there exists a countable ring $R\equiv \OR$ and some $f_i\in S^2(\RM,\RM)$ such that $T_n(\OR)\equiv T_n(R,\bar{f})$ but $T_n(R,\bar{f})\ncong T_n(S)$ for any ring $S$.
\end{enumerate} 
\end{thm}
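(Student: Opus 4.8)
My plan is to derive both parts from Theorem~\ref{main(improved):thm}, which applies verbatim to $\OR$ because the torsion subgroup $T(\OM)$ of the unit group of a ring of integers is the finite group $\mu$ of roots of unity. By that theorem, $H\equiv T_n(\OR)$ if and only if $H\cong T_n(S,\bar{f})$ for some $S\equiv\OR$ and CoT cocycles $f_i$, so in both parts the only remaining question is \emph{when the deformation $\bar{f}$ can be made nontrivial while staying CoT}. The dichotomy in the statement is governed entirely by the structure of $\SM$, and the conceptual point is that over $\OR$ itself every CoT cocycle is forced to be a coboundary, whereas over a suitable elementarily equivalent ring it need not be.

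For part~(1) I would first note that ``$\RM$ has exactly $|\OM|$ elements'' is a first-order sentence, so $\OM$ finite forces $\SM$ finite, hence $\SM=T(\SM)$, for every $S\equiv\OR$. For such $S$ the CoT condition, which only constrains the restriction of $f_i$ to $T(\SM)\times T(\SM)=\SM\times\SM$, says exactly that $f_i$ is a coboundary on all of $\SM\times\SM$; thus the abelian extension $E_n=D_n(S,\bar{f})$ splits and $T_n(S,\bar{f})\cong T_n(S)$. Combined with the forward direction of Theorem~\ref{main(improved):thm} this gives the ``only if'' of~(1) with $R=S$; the ``if'' is immediate, since $T_n(-)$ is uniformly interpretable in the ground ring and interpretations preserve elementary equivalence, so $R\equiv\OR$ yields $T_n(R)\equiv T_n(\OR)$.

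For part~(2) the key is to locate a CoT cocycle that is not a coboundary. Writing $T=T(\RM)$ and $\Lambda=\RM/T$, the splitting of $E_n$ is classified by a class in $Ext(\RM,\RM)=Ext(T,\RM)\oplus Ext(\Lambda,\RM)$, and the CoT condition says precisely that this class lies in the summand $Ext(\Lambda,\RM)$, i.e. that its restriction to $T$ vanishes. Over $\OR$, Dirichlet's unit theorem gives $\Lambda\cong\Z^{\,r}$ with $r=r_1+r_2-1\geq 1$; this $\Lambda$ is free, $Ext(\Lambda,-)=0$, and so over $\OR$ every CoT cocycle is a coboundary---this is exactly why the ring must be changed. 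I would therefore produce, by compactness and the downward L\"owenheim--Skolem theorem, a \emph{countable} ring $R\equiv\OR$ whose torsion-free quotient $\Lambda=\RM/T(\RM)$ is not free (the finite torsion $T=\mu$ is unchanged under $\equiv$, so all the flexibility lives in $\Lambda$, and a nonstandard model of the theory of $\Z^{\,r}$ realizing an appropriate height type fails to be free). Since a countable torsion-free abelian group is free if and only if it is a Whitehead group (Stein's theorem), non-freeness yields a nonzero class in $Ext(\Lambda,\RM)$, hence a symmetric CoT cocycle $f_1$ that is not a coboundary; taking $f_2=\dots=f_{n-1}$ trivial, the resulting $\bar{f}$ is CoT, so $T_n(R,\bar{f})\equiv T_n(\OR)$ by the backward direction of Theorem~\ref{main(improved):thm}, while $E_n=D_n(R,\bar{f})$ is a non-split extension of $Z(G)$ by $(\RM)^{n-1}$.

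It remains to see that this non-splitting survives as an isomorphism invariant, giving $T_n(R,\bar{f})\ncong T_n(S)$ for every ring $S$. Here I would use that the unipotent radical $G_u=UT_n$ and the center $Z(G)$ are definable, hence characteristic, subgroups, so any abstract isomorphism carries the short exact sequence $1\to Z(G)G_u/G_u\to G/G_u\to G/Z(G)G_u\to 1$ of one group to that of the other; since a splitting transports along such a commutative ladder of isomorphisms, the extension class---split versus non-split---is an invariant. For $T_n(S)$ this sequence is split, because $Z(G)$ is a direct factor of $D_n(S)$, whereas for our $T_n(R,\bar{f})$ it is not; hence the two groups cannot be isomorphic. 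I expect the main obstacle to be the construction in part~(2): arranging a single countable $R\equiv\OR$ in which $\Lambda$ is simultaneously nonstandard \emph{and} non-free, and extracting from it a genuinely CoT, genuinely non-coboundarious cocycle---this is where the interplay of Dirichlet's theorem, the vanishing of $Ext$ on free groups, and Stein's theorem does the real work, while the isomorphism-invariance of non-splitting is comparatively routine once the definability of $G_u$ and $Z(G)$ is in hand.
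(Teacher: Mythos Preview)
Your Part~(1) is correct and essentially the paper's argument.

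For Part~(2) there are two genuine gaps.

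\textbf{The Ext computation.} Stein's theorem gives only $Ext(\Lambda,\Z)\neq 0$ from non-freeness of $\Lambda$; you need a nonzero class in $Ext(\Lambda,\RM)$. Since $\RM\cong T\times\Lambda$ and $Ext(\Lambda,T)=0$ by Lemma~\ref{tbytf:lem}, this amounts to $Ext(\Lambda,\Lambda)\neq 0$, which does not follow from $Ext(\Lambda,\Z)\neq 0$ unless you know $\Z$ is a direct summand of $\Lambda$---and you have not established that. The paper takes a different route: it \emph{interprets the ring $\Z$} inside $\OR$ via a definable cyclic subgroup $\lambda^{\Z}\leq\OM$ (Lemma~\ref{x^Z:inter:lem}, a Romanovskii-style argument using divisibility in $\OR$). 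In any countable nonstandard $R\equiv\OR$ this produces a subgroup $M^*\leq\RM$ which, as the additive group of a nonstandard model of $\Z$, splits as $A'\oplus D'$ with $D'\neq 0$ torsion-free divisible. Divisibility makes $D'$ a direct summand of $\RM$; the reduced complement $A$ is nonzero because $\RM\equiv\OM$ is not divisible; and then $Ext(D',A)\neq 0$ gives a CoT non-coboundary directly. Your compactness sketch can be salvaged along the same lines (realize a unit that is an $n$-th power for every $n$), but the conclusion should go through the divisible summand, not through Stein.

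\textbf{The isomorphism invariance.} You assert that $G_u=UT_n$ is definable, hence characteristic. The paper only obtains definability of $G_{u^\pm}=UT_n\times\{\pm I_n\}$ (Lemma~\ref{Gupm:lem}); when $G'\subsetneq UT_n$, which is the typical situation over a ring of integers, $UT_n$ itself is not shown to be definable, and an abstract isomorphism $\phi\colon T_n(R,\bar f)\to T_n(S)$ need not carry $UT_n(R)$ onto $UT_n(S)$. Lemma~\ref{abdef-notiso:lem} works around this: one first shows $\phi$ maps $UT_n(R)\times\{\pm I_n\}$ onto $UT_n(S)\times\{\pm I_n\}$, then twists $\phi$ by a homomorphism $\delta'\colon UT_n(R)\to\{\pm I_n\}$ to produce an isomorphism $\psi$ that \emph{does} send $UT_n(R)$ onto $UT_n(S)$; only then can one compare $G/UT_n$ on the two sides and apply Lemma~\ref{split:lem} to force the $f_i$ to be coboundaries. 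Your short-exact-sequence transport argument needs exactly this correction before it goes through.
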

   

The rest of the paper is structured as follows. In Section~\ref{Tn(R):sec} we briefly discuss the structure of $T_n(R)$. More specifically we describe such a group by generators and relations. Section~\ref{abdef:sec} discusses abelian deformations $T_n(R,\bar{f})$ of $T_n(R)$ in some detail. In Section~\ref{fitt:sec} we discuss first-order definability of nilpotent subgroups and the Fitting subgroup of a group where the Fitting subgroup is itself nilpotent. Section~\ref{mainthm:sec}, clearly titled as so, provides a proof of Theorem~\ref{mainthm}. Section~\ref{suff:sec} provides a proof of Theorem~\ref{main(improved):thm}. In Section~\ref{closed:sec} we present a proof Theorem~\ref{charthm-alg-realclosed:thm}. In Section~\ref{abiso:sec} we discuss the conditions under which a group $T_n(R)$ and some abelian deformation $T_n(S,\bar{f})$ are isomorphic as groups. Sections~\ref{Tn(Q):sec} and~\ref{Tn(O):sec} provide proofs of Theorem~\ref{elemnotiso-Q:thm} and Theorem~\ref{charthm-ringofinter:thm}, respectively.

\section{Generators and relations for $T_n(R)$}\label{Tn(R):sec}
 Again we recall that the group $G=T_n(R)$ is isomorphic to a semi-direct product $D_n(R) \ltimes UT_n(R)$, where $UT_n(R)$ denotes the group of all $n\times n$ upper triangular unipotent matrices and $D_n(R)$ is the group of $n\times n$ diagonal matrices over $R$. Obviously $D_n=D_n(R)\cong (\RM)^n$ where $\RM$ denotes the group multiplicative units of $R$. In order to describe generators and relations for $T_n(R)$ we need those of $UT_n(R)$, $D_n$ and a description of the action of $D_n$ on $UT_n(R)$. To begin, let $e_{ij}$, $i<j$, be the matrix with $ij$'th entry $1$ and every other entry $0$, and let $t_{ij}=I_n+e_{ij}$, where $I_n$ is the $n\times n$ identity matrix. Let also $t_{ij}(\alpha)=I_n+\alpha e_{ij}$, for $\alpha\in R$. 

The $t_{ij}(\alpha)$'s, $1\leq i<j\leq n$, $\alpha\in R$, called \emph{transvections}, generate  $UT_n(R)$ and satisfy the well-known (Steinberg) relations:
\begin{enumerate}
\item $t_{ij}(\alpha)t_{ij}(\beta)=t_{ij}(\alpha+\beta), \forall \alpha,\beta\in R.$
\item \begin{equation*}[t_{ij}(\alpha),t_{kl}(\beta)]=\left\{\begin{array}{ll}
 t_{il}(\alpha\beta)  & \text{if } j=k \\
 t_{kj}(-\alpha\beta)  & \text{if } i=l \\
 I_n &\text{if } i\neq l, j\neq k
  \end{array}\right.
  \end{equation*}
for all $\alpha,\beta\in R$.
\end{enumerate} 
 Indeed these generators and relations define $UT_n(R)$ up to isomorphism. 
 
Let $diag[\alpha_1, \ldots ,\alpha_n]$ be the $n\times n$ diagonal matrix with $ii$'th entry $\alpha_i\in \RM$. The group $D_n(R)$ consists precisely of these elements as the $\alpha_i$ range over $\RM$. Now consider the following diagonal matrices
\[d_i(\alpha)\stackrel{\text{def}}{=} diag[1,\ldots ,  \underbrace{\alpha}_{i'\text{th}}, \ldots, 1],\]
and let us set \[d_i\define d_i(-1).\] Clearly the $d_i(\alpha)$ generate $D_n(F)$ as $\alpha$ ranges over $\RM$. The only relations they satisfy are:
\begin{enumerate}
\item $d_{i}(\alpha)d_{i}(\beta)=d_{i}(\alpha\beta) , \forall \alpha,\beta \in \RM$
\item $[d_i(\alpha),d_j(\beta)]=I_n, \forall \alpha,\beta\in \RM$,
\end{enumerate}
which illustrate the isomorphism $D_n(R)\cong(\RM)^n$.
The elements $d_i(\alpha)$, $1\leq i \leq n$, $\alpha\in \RM$ and $t_{kl}(\beta)$, $1\leq k<l\leq n$, $\beta\in R$ generate $T_n(R)$. To have a presentation for $T_n(R)$ all we need now is to describe the action of the $d_i(\alpha)$ on the $t_{kl}(\beta)$ by conjugation. 

Some simple matrix calculations show that:
 \begin{equation}\label{conj-act:eq}
 d_k(\alpha^{-1})t_{ij}(\beta)d_k(\alpha)=\left\{\begin{array}{ll}
 t_{ij}(\beta)  & \text{if } k\neq i, k\neq j \\
 t_{ij}(\alpha^{-1}\beta) &\text{if } k=i\\
 t_{ij}(\alpha\beta)&\text{if } k=j
 \end{array}\right.
 \end{equation}
  
So indeed we hinted towards a proof of the following lemma:

\begin{lemma}\label{TN-pres:lem} 
The group $T_n(R)$ is generated by  
\[\{d_i(\alpha),t_{kl}(\beta): 1\leq i \leq n, 1\leq k<l\leq n, \alpha\in \RM , \beta\in R\},\]
with relations:

\begin{enumerate}
\item $t_{ij}(\alpha)t_{ij}(\beta)=t_{ij}(\alpha+\beta), \forall \alpha,\beta\in R.$
\item \begin{equation*}[t_{ij}(\alpha),t_{kl}(\beta)]=\left\{\begin{array}{ll}
 t_{il}(\alpha\beta)  & \text{if } j=k \\
 t_{kj}(-\alpha\beta)  & \text{if } i=l \\
 I_n &\text{if } i\neq l, j\neq k
  \end{array}\right.
  \end{equation*}
for all $\alpha,\beta\in R$.
\item $d_{i}(\alpha)d_{i}(\beta)=d_{i}(\alpha\beta) , \forall \alpha,\beta \in \RM$
\item $[d_i(\alpha),d_j(\beta)]=I_n, \forall \alpha,\beta\in \RM$,

\item \begin{equation*}
d_k(\alpha^{-1})t_{ij}(\beta)d_k(\alpha)=\left\{\begin{array}{ll}
 t_{ij}(\beta)  & \text{if } k\neq i, k\neq j \\
 t_{ij}(\alpha^{-1}\beta) &\text{if } k=i\\
 t_{ij}(\alpha\beta)&\text{if } k=j
\end{array}\right.
\end{equation*}
\end{enumerate}
 \end{lemma}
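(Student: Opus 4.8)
The plan is to recognize relations (1)--(5) as exactly the standard presentation of the semidirect product $G=T_n(R)=D_n(R)\ltimes_{\phi_{n,R}}UT_n(R)$, assembled from a presentation of the normal factor $UT_n(R)$, a presentation of the complement $D_n(R)$, and relations recording the conjugation action. Concretely, I would let $P$ denote the abstract group defined by the generators and relations in the statement, and let $\pi\colon P\to G$ be the homomorphism sending each abstract generator to the matrix of the same name. This $\pi$ is well defined because relations (1) and (2) are the Steinberg relations holding in $UT_n(R)$, relations (3) and (4) hold in $D_n(R)\cong(\RM)^n$, and relation (5) is precisely the conjugation rule~\eqref{conj-act:eq}; moreover $\pi$ is surjective since the named matrices generate $G$. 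It then suffices to prove that $\pi$ is injective.

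Next I would analyse the internal structure of $P$. Let $\bar N\leq P$ be the subgroup generated by the images of the $t_{kl}(\beta)$ and let $\bar Q\leq P$ be generated by the images of the $d_i(\alpha)$. Relation (5) shows that conjugating any $t_{ij}(\beta)$ by any $d_k(\alpha)^{\pm1}$ yields another transvection, so $\bar N$ is normalized by $\bar Q$; as $\bar N$ is normalized by itself and $P=\langle\bar N,\bar Q\rangle$, we obtain $\bar N\trianglelefteq P$. A collection argument---using relation (5) repeatedly to push every $d$-generator to the right past the $t$-generators, rewriting each transvection it crosses---shows that every element of $P$ can be written as $nq$ with $n\in\bar N$ and $q\in\bar Q$; hence $P=\bar N\bar Q$.

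The crucial step is to identify the two factors exactly. Since the generators of $\bar N$ satisfy relations (1) and (2), and these relations present $UT_n(R)$ up to isomorphism, there is a surjection $u\colon UT_n(R)\twoheadrightarrow\bar N$ with $t_{kl}(\beta)\mapsto t_{kl}(\beta)$. The composite $\pi\circ u\colon UT_n(R)\to UT_n(R)$ fixes every generator, hence is the identity; a map with a left inverse is injective, so $u$ is an isomorphism and $\pi|_{\bar N}$ is an isomorphism onto $UT_n(R)$. The same retraction argument, using that relations (3) and (4) present $D_n(R)\cong(\RM)^n$, shows $\pi|_{\bar Q}$ is an isomorphism onto $D_n(R)$. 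Finally, if $x\in\bar N\cap\bar Q$ then $\pi(x)\in UT_n(R)\cap D_n(R)=\{I_n\}$, and injectivity of $\pi|_{\bar N}$ forces $x=1$; thus $\bar N\cap\bar Q=1$ and $P=\bar N\rtimes\bar Q$. Injectivity of $\pi$ now follows: writing $x=nq$ with $\pi(x)=I_n$ and projecting $G$ onto its diagonal factor gives $\pi(q)=I_n$, so $q=1$, whence $\pi(n)=I_n$ and $n=1$.

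I expect the only genuinely nontrivial ingredient to be the already-cited fact that the Steinberg relations (1) and (2) present $UT_n(R)$ up to isomorphism; granting that, the remainder is the general lemma on presentations of semidirect products. The only points requiring care are the Hopfian-style retraction (composing $u$ and its diagonal analogue with $\pi$) that upgrades the surjections onto $\bar N$ and $\bar Q$ into isomorphisms, and the verification that $\bar N\cap\bar Q$ is trivial; neither involves more than the bookkeeping already set up above.
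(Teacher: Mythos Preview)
Your proposal is correct and follows exactly the approach the paper sketches: the paper merely ``hints towards a proof'' by assembling the Steinberg presentation of $UT_n(R)$, the obvious presentation of $D_n(R)\cong(\RM)^n$, and the conjugation rule~\eqref{conj-act:eq} into a presentation of the semidirect product $T_n(R)=D_n(R)\ltimes UT_n(R)$. You have supplied the details the paper omits---in particular the retraction argument showing $\pi|_{\bar N}$ and $\pi|_{\bar Q}$ are injective and the verification that $\bar N\cap\bar Q=1$---but the underlying strategy is identical.
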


\section{Abelian deformations of $T_n(R)$}\label{abdef:sec}
Even though abelian deformations of $T_n(R)$ are not necessarily matrix groups, they are close enough to justify the use of the same notation. For example we denote the identity element of such a group by $I_n$. In this section we first define abelian deformations $T_n(R,\bar{f})$ (already defined in Section~\ref{results:sec}) of $T_n(R)$ via generators and relations. 

For each $i=1, \ldots, n-1$ pick $f_i\in S^2(\RM,\RM)$. Define an \emph{abelian deformation} $T_n(R,f_1, \ldots, f_{n-1})$ of $T_n(R)$ as any group isomorphic to the group defined as follows.

$T_n(R,\bar{f})$ is the group generated by 
\[\{d_i(\alpha),t_{kl}(\beta): 1\leq i \leq , 1\leq k<l\leq n, \alpha\in \RM , \beta\in R\},\]

with the defining relations:
\begin{enumerate}
\item $t_{ij}(\alpha)t_{ij}(\beta)=t_{ij}(\alpha+\beta).$
\item \begin{equation*}[t_{ij}(\alpha),t_{kl}(\beta)]=\left\{\begin{array}{ll}
 t_{il}(\alpha\beta)  & \text{if } j=k \\
 t_{kj}(-\alpha\beta)  & \text{if } i=l \\
 I_n &\text{if } i\neq l, j\neq k
  \end{array}\right.
\end{equation*}
\item If $1\leq i\leq n-1$, then $d_{i}(\alpha)d_{i}(\beta)=d_{i}(\alpha\beta)diag(f_i(\alpha,\beta))$, where\\ $diag(f_i(\alpha,\beta))=d_1(f_i(\alpha,\beta))\cdots d_n(f_i(\alpha,\beta)), $
\item $[d_i(\alpha),d_j(\beta)]=I_n$
\item \begin{equation*}
d_k(\alpha^{-1})t_{ij}(\beta)d_k(\alpha)=\left\{\begin{array}{ll}
 t_{ij}(\beta)  & \text{if } k\neq i, k\neq j \\
 t_{ij}(\alpha^{-1}\beta) &\text{if } k=i\\
 t_{ij}(\alpha\beta)&\text{if } k=j
\end{array}\right.
\end{equation*}
\end{enumerate}
 

\begin{lemma}
\label{well-def:lem}
The set $T_n(R,\bar{f})$ is a group for any choice of $f_i\in S^2(\RM,\RM)$.
\end{lemma}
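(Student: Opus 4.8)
The plan is to identify the presented object $T_n(R,\bar f)$ with the semidirect product $E_n \ltimes_{\psi_{n,R}} UT_n(R)$ introduced in Section~\ref{results:sec}, and to deduce the group axioms from those of this semidirect product. Concretely, I would realize $T_n(R,\bar f)$ as the set $E_n \times UT_n(R)$, where $E_n = D_n(R,\bar f)$ is the abelian extension of $Z(G)\cong\RM$ by $D_n/Z(G)\cong(\RM)^{n-1}$ determined by the $f_i$, equipped with the usual semidirect-product multiplication twisted by $\psi_{n,R}$. The work then splits into: (i) checking that $E_n$ is a group; (ii) checking that $\psi_{n,R}$ is a well-defined homomorphism into $Aut(UT_n(R))$, after which the semidirect product is automatically a group; and (iii) verifying that this group is exactly the one presented by relations (1)--(5), so that the two descriptions agree.

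For step (i), the underlying set of $E_n$ is $B\times Z(G)\cong(\RM)^n$ and the multiplication is the cocycle-twisted product recorded in relation (3). The only nonroutine point is associativity, and a direct computation shows that associativity of the product $d_i(\alpha)d_i(\beta)d_i(\gamma)$ is equivalent to the $2$-cocycle identity $f_i(\alpha\beta,\gamma)f_i(\alpha,\beta)=f_i(\alpha,\beta\gamma)f_i(\beta,\gamma)$; here one uses that the twist $diag(f_i(\alpha,\beta))$ is a scalar matrix, hence central, so it may be moved freely past the other diagonal generators. The normalization $f_i(1,\cdot)=f_i(\cdot,1)=1$ supplies the identity $d_i(1)=I_n$, inverses are then given explicitly by $d_i(\alpha)^{-1}=d_i(\alpha^{-1})\,diag(f_i(\alpha,\alpha^{-1}))^{-1}$, and symmetry of each $f_i$ guarantees $d_i(\alpha)d_i(\beta)=d_i(\beta)d_i(\alpha)$, which together with relation (4) makes $E_n$ abelian.

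For step (ii), I would note that $\psi_{n,R}$ is defined to agree with the conjugation action $\phi_{n,R}$ of $D_n(R)$ on $UT_n(R)$ given by relation (5). Since $\ker\phi_{n,R}=Z(G)$ and the entire cocycle twist takes values in $Z(G)$---indeed scalar matrices act trivially on $UT_n(R)$ by conjugation---the twisted multiplication on $E_n$ does not disturb the action, so $\psi_{n,R}$ remains well defined and a homomorphism. Consequently $E_n\ltimes_{\psi_{n,R}}UT_n(R)$ is a group.

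Finally, for step (iii), I would check that this semidirect product satisfies all of relations (1)--(5), which yields a surjection from the presented group onto it, and then exhibit the normal form ``(diagonal word)(unitriangular word)'' for the presented group: relations (3)--(5) let one push every diagonal generator to the left past the transvections, at the cost of central scalar corrections, while relations (1)--(2) are precisely the defining relations of $UT_n(R)$. Matching normal forms shows the surjection is injective, so the two descriptions coincide and $T_n(R,\bar f)$ is a group. The main obstacle is the associativity computation of step (i), since this is exactly the point at which the $2$-cocycle condition becomes indispensable; everything else is bookkeeping, provided one consistently tracks the scalar corrections and invokes $\ker\phi_{n,R}=Z(G)$ to see that the twist is invisible to the action.
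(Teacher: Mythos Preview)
Your proposal is correct and follows essentially the same route as the paper: the paper's proof is explicitly a sketch that identifies $T_n(R,\bar f)$ as the internal semidirect product $E_n\ltimes_{\psi_{n,R}} G_u$ with $G_u\cong UT_n(R)$ generated by the transvections via relations (1)--(2), $E_n$ the abelian group generated by the $d_i(\alpha)$ via relations (3)--(4), and relation (5) furnishing the action $\psi_{n,R}$ with kernel $Z(G)$. Your steps (i)--(iii) simply flesh out this sketch with the associativity/cocycle verification and the normal-form argument that the paper leaves implicit.
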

\begin{proof} 
This is really sketch of a proof. The $t_{ij}(\beta)$ generate a group $G_u\cong{UT_n(R)}$ by relations (1.) and (2.). The $d_i(\alpha)$ generate an abelian group $E_n$ by (3.) and (4.). Note that both of the above are closed under group operations and $G_u\cap E_n=I_n$. By (5.) $G_u$ is stable under the action of $E_n$ by conjugation which is described by (5.) itself, i.e. (5.) describes a homomorphism $\psi_{n,R}:E_n \to Aut(G_u)$ so that $T_n(R,\bar{f}) = E_n\ltimes_{\psi_{n,R}} G_u$, as an internal product, and $ker(\psi_{n,R})=Z(G)=\{diag(\alpha): \alpha\in \RM\}$.\end{proof}

For the future reference, if $R$ is a characteristic zero integral domain by \emph{$-I_n$} we denote the unique element of order 2 of $Z(T_n(R,\bar{f}))$. By $\pm I_n$ we mean $I_n$ or $-I_n$.

A few remarks are in order here.
\begin{rem}\label{biasNobias:rem}  Let $G=T_n(R,\bar{f})$ for some 2-cocycles $f_i\in S^2(\RM,\RM)$. The apparent bias in the definition of $T_n(R,\bar{f})$ to $j=1, \ldots ,n-1$ is no bias at all. Indeed from the definition it is clear that $G$ is a semi-direct product of the normal subgroup $G_u$ generated by $\{t_{ij}(\alpha): 1\leq i<j\leq n, \alpha \in R\}$ and the abelian subgroup $E_n$ generated by $\{d_i(\alpha):i=1,\ldots n, \alpha\in \RM\}$. Again it is clear that $G_u\cong UT_n(R)$.
So each 2-cocycle $f_i$, $i=1, \ldots , n-1$, defines the subgroup $d_i(\RM)$ generated by $\{d_i(\alpha):\alpha\in \RM\}\cup Z(G)$, as an abelian extension of $Z(G)\cong \RM$ by  $d_i(\RM)/Z(G)\cong \RM$. Let for $\alpha,\beta\in \RM$,
\[f(\alpha,\beta)=f_1(\alpha,\beta)\cdots f_{n-1}(\alpha,\beta),\]
and assume $f_n\in S^2(\RM,\RM)$ defines the subgroup $d_n(\RM)$ as an extension of $Z(G)$ by $\RM$. Then
\begin{align*}
diag(\alpha\beta)&=diag(\alpha)diag(\beta)\\
&= d_1(\alpha)d_1(\beta)\cdots d_n(\alpha)d_n(\beta)\\
&= diag(\alpha\beta)diag(f(\alpha,\beta)f_n(\alpha,\beta))
\end{align*}
So we can easily conclude that 
\[f_n(\alpha,\beta)=(f(\alpha,\beta))^{-1}.\]
 Now it is clear that given any $g\in S^2(\RM,\RM)$ one can set, say 
\[f_{n-1}(\alpha,\beta)=(f_1(\alpha,\beta))^{-1}\cdots (f_{n-2}(\alpha,\beta))^{-1}(g(\alpha,\beta))^{-1}\]
to see that $d_n(\RM)$ is defined now by the symmetric 2-cocycle $f_n=g$.    

 Finally we note (omitting the proof) that if $f_i\in B^2(\RM,\RM)$, for all $i=1,\ldots, n-1$, then $T_n(R)\cong T_n(R,\bar{f}).$ \end{rem}

\section{Nilpotent and Fitting subgroups}\label{fitt:sec}
 We start the section by some general remarks on Fitting subgroups and their first-order
definability in groups where the Fitting subgroup is nilpotent. Then we focus on the derived subgroup and Fitting subgroup of an abelian deformation of $T_n(R)$ where $R$ is a characteristic zero integral domain.
\subsection{Definability of the Fitting subgroup of a group where the Fitting subgroup is nilpotent}
The authors are indebted to V. A. Romankov for suggesting the proof of Lemma \ref{le:ncl-g}.

By the \emph{Fitting subgroup} of a group $G$, denoted by $Fitt(G)$ we mean the subgroup generated by all normal nilpotent subgroups of $G$.  Denote by $\CP$ the class of groups $G$  where the Fitting subgroup is itself nilpotent. For example every polycyclic-by-finite group is in $\CP$. Also $T_n(R)$ for any commutative associative ring $R$ unit is in $\CP$. Note that for every group $G$ in $\CP$, $Fitt(G)$ is the unique maximal normal nilpotent subgroup of $G$.

In this section we frequently use the following known result.

\begin{proposition} \label{pr:gamma-c}
Let $G$ be a group generated by a set $A$. Then for any $c \in \N$ the term $\gamma_cG$ of the lower central series of $G$ is generated as a normal subgroup by all left-normed commutators of length $c$ in the generators from $A$.
\end{proposition}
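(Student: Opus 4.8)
The plan is to argue by induction on $c$, comparing $\gamma_c G$ with the subgroup $N_c$ generated as a normal subgroup of $G$ by the set $L_c$ of all left-normed commutators $[a_1,\ldots,a_c]$ of length $c$ with each entry in $A$. For $c=1$ the set $L_1$ is $A$ itself, so $N_1$ is the normal closure of $A$, which is all of $G=\gamma_1 G$ since $A$ already generates $G$. This settles the base case.

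For the inductive step I assume the statement holds for $c$ and for every group equipped with any generating set, and recall that $\gamma_{c+1}G=[\gamma_c G,G]$. The inclusion $N_{c+1}\subseteq \gamma_{c+1}G$ is the easy half: every element of $L_{c+1}$ has the form $[w,a]$ with $w\in L_c\subseteq\gamma_c G$ (by the induction hypothesis) and $a\in A\subseteq G$, hence lies in $[\gamma_c G,G]=\gamma_{c+1}G$; since $\gamma_{c+1}G$ is normal, the normal closure $N_{c+1}$ of $L_{c+1}$ is contained in it.

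The reverse inclusion $\gamma_{c+1}G\subseteq N_{c+1}$ is where the work lies, and I would obtain it by passing to the quotient $\widetilde G=G/N_{c+1}$, which is generated by the images $\widetilde A$ of $A$. Applying the induction hypothesis to $\widetilde G$ shows that $\gamma_c\widetilde G$ is the normal closure of the images $\widetilde w$ of the commutators $w\in L_c$. For each such $w$ and each $a\in A$ one has $[w,a]\in L_{c+1}\subseteq N_{c+1}$, so $\widetilde w$ commutes with every generator $\widetilde a$; since the centralizer of $\widetilde w$ is a subgroup containing $\widetilde A$, it equals all of $\widetilde G$, i.e.\ $\widetilde w\in Z(\widetilde G)$. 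Because central elements are fixed under conjugation, the normal closure of the $\widetilde w$ coincides with the ordinary subgroup they generate and is therefore central; thus $\gamma_c\widetilde G\leq Z(\widetilde G)$, whence $\gamma_{c+1}\widetilde G=[\gamma_c\widetilde G,\widetilde G]=1$, which is exactly $\gamma_{c+1}G\subseteq N_{c+1}$.

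The only delicate point is this last paragraph: one must be careful that the property ``generated as a normal subgroup by $L_c$'' can be upgraded, after quotienting by $N_{c+1}$, to genuine centrality of $\gamma_c\widetilde G$, using the elementary commutator facts that $[w,a]=1$ forces $w$ to commute with $a^{-1}$ and with arbitrary products of generators. Everything else is a direct unwinding of the definition of the lower central series, and I expect no further obstacle.
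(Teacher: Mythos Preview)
Your argument is correct. The induction via the quotient $G/N_{c+1}$ is a clean way to handle the hard inclusion: once the images of the length-$c$ left-normed commutators are seen to be central (because each $[w,a]$ with $w\in L_c$, $a\in A$ dies in the quotient), the normal closure they generate is central, so $\gamma_{c+1}(G/N_{c+1})=1$ and hence $\gamma_{c+1}G\subseteq N_{c+1}$. The one fact you invoke implicitly---that $\gamma_{c+1}(G/N)=\gamma_{c+1}(G)\,N/N$ for any normal subgroup $N$---is standard and immediate from the definition of the lower central series.

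As for comparison with the paper: there is nothing to compare. The paper states Proposition~\ref{pr:gamma-c} as a ``known result'' and gives no proof; it is simply quoted for later use in Lemmas~\ref{le:nilpotent-c} and~\ref{le:ncl-g}. Your write-up therefore supplies what the paper omits, and the approach you take is the textbook one (essentially the argument behind, e.g., the standard fact that nilpotency class can be read off from commutators in generators).
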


\begin{lemma} \label{le:nilpotent-c}
For any $c, n \in \N$ there is a formula $\Phi_c(x_1, \ldots,x_n)$  such that for any group $G$ and any tuple of elements $g_1, \ldots,g_n \in G$  the following holds:
\begin{equation} \label{eq:Phi-c}
G \models \Phi_c(g_1, \ldots,g_n) \Longleftrightarrow  \gamma_c\langle g_1, \ldots,g_n\rangle = 1.
\end{equation}
\end{lemma}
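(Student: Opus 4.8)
The plan is to read the formula straight off Proposition~\ref{pr:gamma-c}. Set $G_0 = \langle g_1, \ldots, g_n\rangle$. By that proposition, $\gamma_c G_0$ is generated, \emph{as a normal subgroup of $G_0$}, by the left-normed commutators of length $c$ in the generators $g_1, \ldots, g_n$, i.e.\ by the elements
\[
[g_{i_1}, g_{i_2}, \ldots, g_{i_c}], \qquad (i_1, \ldots, i_c) \in \{1, \ldots, n\}^c.
\]
There are at most $n^c$ such commutators, so this is a \emph{finite} generating set. The one genuine observation I would invoke is that the normal closure of a set $S$ in any group is trivial if and only if every element of $S$ is the identity. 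Consequently $\gamma_c G_0 = 1$ holds precisely when all of the finitely many left-normed commutators above equal $1$.

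Accordingly I would simply define
\[
\Phi_c(x_1,\ldots,x_n) \define \bigwedge_{(i_1,\ldots,i_c)\in\{1,\ldots,n\}^c} [x_{i_1}, x_{i_2}, \ldots, x_{i_c}] = 1,
\]
where $[x_{i_1}, \ldots, x_{i_c}]$ abbreviates the left-normed commutator word $[\cdots[[x_{i_1},x_{i_2}],x_{i_3}]\cdots,x_{i_c}]$. Each such commutator is a term in the group language (built from $\cdot$ and $^{-1}$), so each conjunct is an atomic formula of the form $w(\bar x)=1$ for a fixed group word $w$; the conjunction is finite, hence $\Phi_c$ is a bona fide first-order formula of the language $L$, and crucially it depends only on $c$ and $n$, not on $G$. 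The equivalence~\eqref{eq:Phi-c} is then immediate: for any group $G$ and any tuple $g_1, \ldots, g_n$, we have $G \models \Phi_c(g_1, \ldots, g_n)$ iff all the displayed left-normed commutators in the $g_i$ are trivial, which by the previous paragraph is iff $\gamma_c\langle g_1, \ldots, g_n\rangle = 1$.

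There is no serious obstacle here; the content is entirely packaged into Proposition~\ref{pr:gamma-c}. The only point meriting care is the reduction from ``the normal subgroup $\gamma_c G_0$ is trivial'' to ``each of its normal generators is trivial'' --- that is, that we may test triviality on generators rather than on the full normal closure. This is exactly what lets us avoid quantifying over the (variably generated) subgroup $\langle g_1, \ldots, g_n\rangle$ and instead write a quantifier-free finite conjunction, which is what makes $\Phi_c$ first-order and uniform in $G$.
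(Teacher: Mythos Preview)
Your proof is correct and follows essentially the same approach as the paper: both invoke Proposition~\ref{pr:gamma-c} to obtain a finite normal generating set for $\gamma_c\langle g_1,\ldots,g_n\rangle$ and then take the finite conjunction of the corresponding atomic equations. The only cosmetic difference is that the paper indexes its conjunction over left-normed commutator words in the alphabet $x_1^{\pm 1},\ldots,x_n^{\pm 1}$ (i.e.\ allowing inverses), whereas you range only over $\{1,\ldots,n\}^c$; your version is still correct since $[a^{-1},b]=([a,b]^{-1})^{a^{-1}}$ and similar identities show the inverses are redundant for normal generation.
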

\begin{proof}
 Denote by $Com_c(x_1, \ldots,x_n)$ the set of all left-normed commutators of length $c$ in variables $x_1, \ldots,x_n$, viewed as words in the alphabet $x_1, x_1^{-1}, \ldots, x_n, x_n^{-1}$. Then by Proposition \ref{pr:gamma-c} the formula 
\[\Phi_c(x_1, \ldots,x_n) = \bigwedge_{w \in Com_c(x_1, \ldots,x_n)} w = 1\]
satisfies the condition (\ref{eq:Phi-c}). This proves the lemma.\end{proof}
\begin{corollary}\label{cor:nilpotent=c}
There is a formula $\Phi_{=c}(x_1, \ldots,x_n)$  such that for any group $G$ and any tuple of elements $g_1, \ldots,g_n \in G$  the formula 
$\Phi_{=c}(g_1, \ldots,g_n)$ holds in $G$ if and only if the subgroup generated by $  g_1, \ldots,g_n$ is nilpotent of class $c$.
\end{corollary}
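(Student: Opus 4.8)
The plan is to build $\Phi_{=c}$ by combining the formula $\Phi_c$ from Lemma~\ref{le:nilpotent-c} with its analogue at level $c$. Recall that a finitely generated group $H = \langle g_1, \ldots, g_n\rangle$ is nilpotent of class exactly $c$ precisely when its lower central series terminates at step $c$ but not earlier; that is, $\gamma_{c+1}H = 1$ while $\gamma_c H \neq 1$. The formula $\Phi_c(x_1, \ldots, x_n)$ already captures $\gamma_c\langle x_1, \ldots, x_n\rangle = 1$, so the natural first move is to set
\[
\Phi_{=c}(x_1, \ldots, x_n) = \Phi_{c+1}(x_1, \ldots, x_n) \wedge \neg\, \Phi_c(x_1, \ldots, x_n).
\]

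First I would verify the forward direction: if $G \models \Phi_{=c}(g_1, \ldots, g_n)$, then $\Phi_{c+1}$ gives $\gamma_{c+1}\langle g_1, \ldots, g_n\rangle = 1$, so $H = \langle g_1, \ldots, g_n\rangle$ is nilpotent of class at most $c$, and $\neg\Phi_c$ gives $\gamma_c H \neq 1$, so the class is at least $c$; hence it is exactly $c$. Conversely, if $H$ is nilpotent of class exactly $c$, then by definition $\gamma_{c+1}H = 1$ and $\gamma_c H \neq 1$, so both conjuncts hold. This direction is essentially immediate once one is careful about the convention that ``class $c$'' means $\gamma_{c+1} = 1$ and $\gamma_c \neq 1$; a small bookkeeping point is the degenerate case $c = 0$ (the trivial group), where $\gamma_0$ or $\gamma_1$ must be interpreted so that the statement still reads correctly, but for $c \geq 1$ everything is uniform.

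The main thing to get right is not a deep obstacle but a matter of indexing: one must confirm that $\Phi_{c+1}$ really is the formula delivered by Lemma~\ref{le:nilpotent-c} applied with parameter $c+1$ (so that it genuinely asserts $\gamma_{c+1}\langle \bar x\rangle = 1$), and that the lower central series convention matches the one used in Proposition~\ref{pr:gamma-c}. Since Lemma~\ref{le:nilpotent-c} produces such a formula for every $c \in \N$, and first-order formulas are closed under conjunction and negation, the resulting $\Phi_{=c}$ is a legitimate first-order formula. I expect the entire argument to be short, with the only care needed being the off-by-one alignment between nilpotency class and the index of the vanishing term of the lower central series.
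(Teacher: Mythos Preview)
Your proposal is correct and is exactly the intended argument: the paper states the corollary without proof, as it follows immediately from Lemma~\ref{le:nilpotent-c} by taking $\Phi_{=c} = \Phi_{c+1} \wedge \neg \Phi_c$, precisely as you wrote. Your remark about checking the indexing convention against Proposition~\ref{pr:gamma-c} (where $\gamma_cG$ is generated by commutators of length $c$, so $\gamma_1G = G$) is the only thing to verify, and you have it right.
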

\begin{corollary}\label{cor:nilpotent-maximal}
Assume that $g_1$, $g_2$, \ldots, $g_n$ are elements of a group $G$, which generate a maximal nilpotent subgroup $H$ of some nilpotency class, say $c$. Then

\[g\in H \Leftrightarrow G\models \Phi_{=c}(g,g_1, \ldots, g_n).\]
\end{corollary}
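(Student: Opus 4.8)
The plan is to reduce the claim to Corollary~\ref{cor:nilpotent=c} together with the maximality hypothesis on $H$, so that the argument becomes essentially formal. A first, minor point to dispatch is that the instance of $\Phi_{=c}$ appearing here has $n+1$ free variables, whereas Corollary~\ref{cor:nilpotent=c} was stated for $n$. But the construction in Lemma~\ref{le:nilpotent-c} produces a formula of the required shape for any fixed number of variables (it is just a finite conjunction of commutator identities), so I may freely invoke a formula $\Phi_{=c}(x, x_1, \ldots, x_n)$ that holds of a tuple exactly when the subgroup it generates is nilpotent of class $c$.

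For the forward implication, I would assume $g \in H$. Since $g_1, \ldots, g_n$ already generate $H$ and $g$ lies in $H$, adjoining $g$ to the generating set changes nothing: $\langle g, g_1, \ldots, g_n \rangle = \langle g_1, \ldots, g_n \rangle = H$. As $H$ is nilpotent of class $c$ by hypothesis, Corollary~\ref{cor:nilpotent=c} applied to the tuple $(g, g_1, \ldots, g_n)$ yields $G \models \Phi_{=c}(g, g_1, \ldots, g_n)$.

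For the reverse implication, I would assume $G \models \Phi_{=c}(g, g_1, \ldots, g_n)$. By Corollary~\ref{cor:nilpotent=c} the subgroup $K = \langle g, g_1, \ldots, g_n \rangle$ is nilpotent. Clearly $K \supseteq \langle g_1, \ldots, g_n \rangle = H$, so $K$ is a nilpotent subgroup of $G$ containing the \emph{maximal} nilpotent subgroup $H$; maximality then forces $K = H$, whence $g \in K = H$, as required.

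I do not expect a genuine obstacle here: all the real work of encoding ``nilpotent of class $c$'' as a first-order condition was already done in Lemma~\ref{le:nilpotent-c} and Corollary~\ref{cor:nilpotent=c}, and what remains is a short group-theoretic combination. The one point deserving care is that the reverse direction relies essentially on the \emph{maximality} of $H$ and not merely on its being nilpotent of class $c$: without maximality, $K$ could strictly contain $H$ while still satisfying $\Phi_{=c}$, and the stated equivalence would break down.
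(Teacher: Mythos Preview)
Your proposal is correct and follows essentially the same approach as the paper: the forward direction is immediate since adjoining $g\in H$ does not change the generated subgroup, and the reverse direction uses that $K=\langle g,g_1,\ldots,g_n\rangle$ is nilpotent and contains $H$, so maximality forces $K=H$. Your extra remark about the arity of $\Phi_{=c}$ is a harmless clarification the paper leaves implicit.
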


\begin{proof} The $\Rightarrow$ direction is clear. To prove the other direction just note that if $g$ satisfies $\Phi_{=c}(g,g_1, \ldots, g_n)$ then the subgroup $K=\langle g,g_1, \ldots, g_n\rangle$ is nilpotent and contains $H$ . So by maximality of $H$, $K=H$, which implies the result.
\end{proof}

\begin{lemma} \label{le:ncl-g} 
There is a formula $\Phi_{ncl,c}(x)$  such that for any group $G$ and any element $g \in G$  the formula 
$\Phi_{ncl, c}(g)$ holds in $G$ if and only if the normal subgroup generated by $g$ is nilpotent of class at most $c$.
\end{lemma}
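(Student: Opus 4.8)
The plan is to observe that the normal subgroup generated by $g$, call it $N = \langle g^h : h \in G\rangle$, is generated as an ordinary subgroup by the conjugates $g^h$, $h \in G$, and then to express its nilpotency of class at most $c$ by the single condition $\gamma_{c+1}(N) = 1$. The obstacle preventing a direct application of Lemma~\ref{le:nilpotent-c} is that $N$ is typically not finitely generated --- one generator $g^h$ for each $h \in G$ --- so the finite conjunction of commutator relations used there is unavailable. The key idea is to trade this infinite generating set for universal quantifiers over the conjugating elements.

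Concretely, I would apply Proposition~\ref{pr:gamma-c} to the group $N$ with generating set $A = \{g^h : h \in G\}$. It gives that $\gamma_{c+1}(N)$ is generated, as a normal subgroup of $N$, by all left-normed commutators of length $c+1$ in elements of $A$. Since a normal subgroup generated by a family of elements is trivial exactly when every one of those elements is trivial, we get that $N$ is nilpotent of class at most $c$ (equivalently $\gamma_{c+1}(N) = 1$) if and only if
\[
[\, g^{h_1}, g^{h_2}, \ldots, g^{h_{c+1}} \,] = 1 \quad \text{for all } h_1, \ldots, h_{c+1} \in G,
\]
where the bracket denotes the left-normed commutator. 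Each entry of such a commutator is a conjugate $g^{h_i}$ determined by the single group element $h_i$, so the displayed condition is expressible by the first-order formula
\[
\Phi_{ncl,c}(x) \;:=\; \forall y_1 \cdots \forall y_{c+1}\; \bigl(\, [\, x^{y_1}, x^{y_2}, \ldots, x^{y_{c+1}} \,] = 1 \,\bigr),
\]
in which $x^{y}$ abbreviates $y^{-1} x y$ and the length-$(c+1)$ left-normed commutator is itself a fixed group word. This is the desired formula.

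The verification then amounts to two routine checks: first, that nilpotency of class at most $c$ for $N$ is indeed equivalent to $\gamma_{c+1}(N)=1$ (this is the definition of nilpotency class via the lower central series); and second, that the left-normed commutators of length $c+1$ in the generators $g^h$ are exactly the words $[\, x^{y_1}, \ldots, x^{y_{c+1}} \,]$ evaluated at $x = g$ as the $y_i$ range over $G$, so that the universal quantifiers faithfully range over all the required generators. I expect no genuine difficulty beyond correctly bookkeeping the commutator length (length $c+1$ for class at most $c$) and invoking Proposition~\ref{pr:gamma-c} to restrict attention to left-normed commutators rather than all nested commutators. The only conceptual point worth stressing is that although $N$ has unboundedly many generators, a fixed commutator of bounded length $c+1$ involves only $c+1$ of them, which is precisely what renders the infinite generating set harmless and the condition first-order.
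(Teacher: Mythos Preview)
Your proof is correct and essentially identical to the paper's: both observe that $\langle g\rangle^G$ is generated by the conjugates $\{g^h : h\in G\}$, invoke Proposition~\ref{pr:gamma-c} to reduce $\gamma_{c+1}(N)=1$ to the vanishing of all left-normed commutators of length $c+1$ in these conjugates, and then write down the same formula $\forall y_1\cdots\forall y_{c+1}\,[x^{y_1},\ldots,x^{y_{c+1}}]=1$. If anything, your bookkeeping of the index (length $c+1$ for class $\le c$) is cleaner than the paper's slightly inconsistent prose.
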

\begin{proof}
The normal subgroup $\langle g \rangle^G$ generated by $g$ in $G$ is generated by a set $A = \{y^{-1}gy \mid y \in G\}$. We  mentioned  in the proof of Lemma \ref{le:nilpotent-c} that $\gamma_c\langle g \rangle^G = 1$  if and only if all the left-normed commutators of length $c$ in generators from $A$ are equal to 1 in $G$. Therefore the following formula does the job:
\[\Phi_{ncl,c}(x) = \forall y_1 \ldots \forall y_{c+1} ([y_1^{-1}xy_1, \ldots,y_{c+1}^{-1}xy_{c+1}] = 1.\] 
\end{proof}

A straightforward  modification of the argument in Lemma \ref{le:ncl-g}  proves the following lemma.
\begin{lemma}
There is a formula $\Phi_{ncl,c}(x_1, \ldots,x_n)$  such that for any group $G$ and any elements $g_1, \ldots,g_n \in G$  the formula 
$\Phi_{ncl,c}(g_1, \ldots,g_n)$ holds in $G$ if and only if the normal subgroup generated by $g_1, \ldots,g_n$ is nilpotent of class at most $c$.
\end{lemma}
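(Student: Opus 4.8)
The plan is to follow the proof of Lemma~\ref{le:ncl-g} almost verbatim, replacing the single element $g$ by the tuple $g_1, \ldots, g_n$ and keeping careful track of the two sources of combinatorial data that enter a left-normed commutator built from conjugates of the generators. First I would observe that the normal subgroup $\langle g_1, \ldots, g_n\rangle^G$ is generated, as an ordinary subgroup, by the set
\[A = \{\, y^{-1} g_i y \mid y \in G,\ 1 \leq i \leq n \,\}\]
of all conjugates of all the $g_i$. This is the only structural input required, and it is immediate from the definition of the normal closure.

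Next I would invoke Proposition~\ref{pr:gamma-c} applied to $\langle g_1, \ldots, g_n\rangle^G$ with generating set $A$: the term $\gamma_{c+1}\langle g_1, \ldots, g_n\rangle^G$ is generated as a normal subgroup by all left-normed commutators of length $c+1$ in the elements of $A$. Since the normal closure is nilpotent of class at most $c$ precisely when its $\gamma_{c+1}$ is trivial, it is nilpotent of class at most $c$ if and only if every length-$(c+1)$ commutator in elements of $A$ equals $1$.

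It remains to encode this condition by a single first-order formula. A typical element of $A$ has the form $y^{-1} g_i y$, so a typical length-$(c+1)$ left-normed commutator is
\[[\, y_1^{-1} g_{i_1} y_1,\ y_2^{-1} g_{i_2} y_2,\ \ldots,\ y_{c+1}^{-1} g_{i_{c+1}} y_{c+1}\,],\]
determined by a choice of conjugators $y_1, \ldots, y_{c+1} \in G$ together with an index tuple $(i_1, \ldots, i_{c+1}) \in \{1, \ldots, n\}^{c+1}$. The conjugators are handled by universal quantification, exactly as in Lemma~\ref{le:ncl-g}, while the index tuple --- which in the one-generator case was forced --- now ranges over a set with more than one element, and I would absorb it into a finite conjunction. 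Thus I propose the formula
\[\Phi_{ncl,c}(x_1, \ldots, x_n) = \bigwedge_{(i_1, \ldots, i_{c+1}) \in \{1, \ldots, n\}^{c+1}} \forall y_1 \cdots \forall y_{c+1}\ \bigl[\, y_1^{-1} x_{i_1} y_1,\ \ldots,\ y_{c+1}^{-1} x_{i_{c+1}} y_{c+1}\,\bigr] = 1.\]

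The only point meriting a word of justification --- and the reason the modification is \emph{straightforward} rather than automatic --- is that $\Phi_{ncl,c}$ is genuinely first-order. This holds because both $n$ and $c$ are fixed constants: the conjunction runs over the finite set of $n^{c+1}$ index tuples, and each conjunct uses only $c+1$ quantified variables and a single fixed-length commutator word. Hence I expect no obstacle of substance, only the bookkeeping of confirming that the conjunction enumerates exactly the length-$(c+1)$ left-normed commutators in conjugates of the $g_i$, which follows at once from the description of $A$ above.
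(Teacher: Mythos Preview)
Your proposal is correct and is exactly the straightforward modification the paper has in mind: you replace the single generator in Lemma~\ref{le:ncl-g} by the tuple $g_1,\ldots,g_n$, observe that the normal closure is generated by all conjugates of all the $g_i$, apply Proposition~\ref{pr:gamma-c}, and absorb the finitely many index patterns into a conjunction while universally quantifying over the conjugators. The paper gives no further detail than ``a straightforward modification of the argument in Lemma~\ref{le:ncl-g}'', and your write-up is precisely that modification spelled out.
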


Lemma \ref{le:ncl-g} immediately implies 
\begin{corollary}\label{le:FittingNilpotent}
Let $G$ be a group where  Fitting subgroup $Fitt(G)$ is nilpotent of class $c$. Then the formula $\Phi_{ncl,c}(x)$ (from Lemma \ref{le:ncl-g}) defines the subgroup $Fitt(G)$ in $G$.
 \end{corollary}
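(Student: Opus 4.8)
The plan is to prove the set equality $\Phi_{ncl,c}(G) = Fitt(G)$ by the two obvious inclusions, using Lemma~\ref{le:ncl-g} to pass between the syntactic condition ``$G\models\Phi_{ncl,c}(g)$'' and the algebraic condition ``the normal closure $\langle g\rangle^G$ is nilpotent of class at most $c$''. Since the corollary is essentially a repackaging of Lemma~\ref{le:ncl-g} together with the defining property of the Fitting subgroup, I expect each inclusion to reduce to a one-line argument.

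For the inclusion $Fitt(G)\subseteq\Phi_{ncl,c}(G)$, I would take $g\in Fitt(G)$ and observe that, because $Fitt(G)$ is a normal subgroup of $G$ containing $g$, the normal closure $\langle g\rangle^G$ lies inside $Fitt(G)$. The hypothesis that $Fitt(G)$ is nilpotent of class $c$ then guarantees that this subgroup, being a subgroup of $Fitt(G)$, is nilpotent of class at most $c$, and Lemma~\ref{le:ncl-g} converts this into $G\models\Phi_{ncl,c}(g)$. This is the step where the numerical bound $c$ is actually used, and it is the only place the hypothesis enters.

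For the reverse inclusion $\Phi_{ncl,c}(G)\subseteq Fitt(G)$, I would take $g$ with $G\models\Phi_{ncl,c}(g)$, apply Lemma~\ref{le:ncl-g} to get that $\langle g\rangle^G$ is a nilpotent normal subgroup of $G$, and then invoke the definition of $Fitt(G)$ as the subgroup generated by all normal nilpotent subgroups to conclude $\langle g\rangle^G\subseteq Fitt(G)$, whence $g\in Fitt(G)$. This direction needs nothing beyond the definition of the Fitting subgroup; it does not even use that $Fitt(G)$ is itself nilpotent.

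There is no serious obstacle here: all the real content sits in the previously established Lemma~\ref{le:ncl-g}. The only point requiring a moment's care is that the class bound must match, i.e.\ that membership in $Fitt(G)$ forces the normal closure to have class \emph{at most $c$} rather than merely some finite class; this is exactly what choosing $c$ to be the nilpotency class of $Fitt(G)$ achieves, so that a single fixed constant $c$ can be used uniformly in the defining formula.
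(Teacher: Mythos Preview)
Your proof is correct and is exactly the argument the paper has in mind: the paper offers no explicit proof, merely stating that the corollary is an immediate consequence of Lemma~\ref{le:ncl-g}, and your two-inclusion verification is precisely the natural unpacking of that remark.
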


Now we show that nilpotency of Fitting subgroup of a group is an elementary property.

For $c, k \in \N$ define a sentence
\[
Fitt_{c,k} = \forall g (\Phi_{ncl,c+k}(g) \to \Phi_{ncl,c}(g))
\]
and put
\[
Fitt_c = \{Fitt_{c,k} \mid k \in \N\}.
 \]
It follows that if a group $G$ satisfies all the sentences from $Fitt_c$ then every nilpotent normal subgroup of the type $\langle g \rangle^G$ is nilpotent of class at most $c-1$.  Now define a sentence 
\[
\Phi_c ^\ast = \forall g_1 \ldots\forall g_c (\bigwedge_{i = 1}^n \Phi_{ncl,c}(g_i) \to \Phi_{ncl,c}(g_1, \ldots,g_n).
\]
and put 
\[
Fitt_c^\ast = \{Fitt_{c,k} \mid k \in \N\} \cup \{\Phi_c^\ast\}.
\]
The statement $\Phi_c ^\ast$ says that the set $A = \{ g \in G \mid  \gamma_c(\langle g \rangle^G) = 1\}$ generates a nilpotent group of class at most $c-1$.  

The argument above proves  
\begin{proposition}\label{pr:Fitt}
For any group $G$ the following holds
\[G \models Fitt_c ^\ast\Longleftrightarrow \gamma_cFitt(G) = 1.\]
\end{proposition}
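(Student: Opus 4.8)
The plan is to produce a first-order–definable generating set for $Fitt(G)$ and then use Proposition~\ref{pr:gamma-c} to translate triviality of $\gamma_c$ into the vanishing of finitely many commutators in those generators. Put
\[A = \{g \in G : \langle g \rangle^G \text{ is nilpotent}\}.\]
First I would verify $Fitt(G) = \langle A\rangle$: if $g\in A$ then $\langle g\rangle^G$ is a normal nilpotent subgroup, so $g\in Fitt(G)$; conversely any normal nilpotent subgroup $M$ satisfies $M\subseteq A$, because for $m\in M$ the closure $\langle m\rangle^G$ lies in $M$ and is therefore nilpotent. Since $Fitt(G)$ is generated by the normal nilpotent subgroups, this yields $Fitt(G)\subseteq\langle A\rangle\subseteq Fitt(G)$.

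For $(\Rightarrow)$, suppose $G\models Fitt_c^\ast$. The point of the scheme $Fitt_c$ is to bound the nilpotency class uniformly on $A$: if $g\in A$ then $\langle g\rangle^G$ has finite class, say at most $c+k$, so $\Phi_{ncl,c+k}(g)$ holds by Lemma~\ref{le:ncl-g}, and then $Fitt_{c,k}$ forces $\Phi_{ncl,c}(g)$. Hence $A$ coincides with the absolutely definable set cut out by $\Phi_{ncl,c}$. Now $\Phi_c^\ast$ says exactly that this definable set generates a group of nilpotency class at most $c-1$; applying Proposition~\ref{pr:gamma-c} to $\langle A\rangle$, whose $\gamma_c$ is normally generated by the left-normed commutators of length $c$ in elements of $A$, we conclude $\gamma_c Fitt(G)=\gamma_c\langle A\rangle=1$.

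For $(\Leftarrow)$, suppose $\gamma_c Fitt(G)=1$, so $Fitt(G)$ is nilpotent of class at most $c-1$. Each $Fitt_{c,k}$ holds because any $g$ with $\Phi_{ncl,c+k}(g)$ has $\langle g\rangle^G$ nilpotent, hence contained in $Fitt(G)$ and of class at most $c-1$, so $\Phi_{ncl,c}(g)$ holds. Likewise $\Phi_c^\ast$ holds: whenever the arguments satisfy $\Phi_{ncl,c}$ their normal closures are nilpotent, so the arguments lie in $Fitt(G)$, and any left-normed commutator of length $c$ of them lands in $\gamma_c Fitt(G)=1$.

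The step I expect to be the main obstacle is not the commutator bookkeeping but the definability issue hidden in the first direction. A priori $Fitt(G)=\langle A\rangle$ is an infinite join of normal nilpotent subgroups, and membership in its generating set $A$ is only piecewise first-order, since it requires an \emph{unbounded} nilpotency class. The reason $Fitt_c$ must be an infinite scheme rather than a single sentence is precisely to guarantee that, in any model satisfying it, this unbounded condition collapses to the single formula $\Phi_{ncl,c}$, rendering $A$ uniformly definable; only after this collapse can the single sentence $\Phi_c^\ast$ quantify over $A$ and, via Proposition~\ref{pr:gamma-c}, control the nilpotency class of the entire Fitting subgroup. Keeping the index conventions of $\Phi_{ncl,\bullet}$ and of the two schemes mutually consistent throughout is the one genuinely error-prone point.
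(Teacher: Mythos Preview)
Your proposal is correct and follows essentially the same approach as the paper. The paper offers no separate proof beyond the discussion immediately preceding the proposition: it introduces $A=\{g:\gamma_c\langle g\rangle^G=1\}$, notes that the scheme $Fitt_c$ forces every nilpotent $\langle g\rangle^G$ to have bounded class so that $A$ is cut out by the single formula $\Phi_{ncl,c}$, and that $\Phi_c^\ast$ then bounds the class of $\langle A\rangle=Fitt(G)$ via Proposition~\ref{pr:gamma-c}---exactly your argument, and your caveat about the index conventions is well taken (the paper itself has a slight mismatch between its stated meaning of $\Phi_{ncl,c}$ and the conclusion $\gamma_c Fitt(G)=1$).
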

\begin{corollary}\label{co:class-P}
The class $\CP$ is closed under elementary equivalence.
\end{corollary}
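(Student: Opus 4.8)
The plan is to leverage Proposition~\ref{pr:Fitt} directly: for each fixed $c$, the set of sentences $Fitt_c^\ast$ exactly axiomatizes the condition $\gamma_c Fitt(G) = 1$. Since membership in $\CP$ is precisely the assertion that $\gamma_c Fitt(G) = 1$ holds for \emph{some} $c$, and since elementary equivalence transfers satisfaction of any fixed set of first-order sentences, the result follows once we are careful about the quantification over $c$.

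First I would take a group $G \in \CP$ together with an arbitrary $H \equiv G$, and aim to show $H \in \CP$. Because $G \in \CP$, its Fitting subgroup is nilpotent, of some class $c-1$, so $\gamma_c Fitt(G) = 1$ for this particular $c$. By Proposition~\ref{pr:Fitt} this is equivalent to $G \models Fitt_c^\ast$, i.e.\ $G$ satisfies every sentence belonging to the set $Fitt_c^\ast$.

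Next, since $H \equiv G$, the group $H$ satisfies exactly the same first-order sentences as $G$; in particular $H$ satisfies every sentence of $Fitt_c^\ast$, so $H \models Fitt_c^\ast$. Applying Proposition~\ref{pr:Fitt} to $H$ now yields $\gamma_c Fitt(H) = 1$, whence $Fitt(H)$ is nilpotent and $H \in \CP$. By the symmetry of the relation $\equiv$ the converse implication holds as well, establishing that $\CP$ is closed under elementary equivalence.

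The point to watch --- and the only conceptual obstacle --- is that the defining condition of $\CP$ is an infinite disjunction, ``$\exists c$ such that $\gamma_c Fitt(G) = 1$'', which is \emph{not} itself expressible by a single first-order sentence, nor by a single first-order theory. The reason the argument nonetheless goes through is that one never needs to render the class condition first-order; it suffices that for each individual witness $c$ the property $\gamma_c Fitt(G) = 1$ is captured by the first-order theory $Fitt_c^\ast$, and that elementary equivalence preserves that same witness $c$. In particular, no uniform bound on the nilpotency class of the Fitting subgroup across the whole class $\CP$ is required, so the absence of such a bound poses no difficulty.
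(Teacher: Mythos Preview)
Your proof is correct and is exactly the argument the paper has in mind: the corollary is stated immediately after Proposition~\ref{pr:Fitt} with no separate proof, since it follows at once by transferring $Fitt_c^\ast$ along $\equiv$ for the particular $c$ witnessing nilpotency of $Fitt(G)$. Your closing paragraph about the infinite disjunction over $c$ is a useful clarification but not something the paper spells out.
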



\subsection{ The derived subgroup and the Fitting subgroup of $T_n(R,\bar{f})$}
\begin{lemma}\label{G':lemma} Assume $R$ is a commutative associative ring with unit. Then the derived subgroup $G'$ of $G=T_n(R,\bar{f})$ is the subgroup of $G$ generated by
\[X=\{t_{i,i+1}((1-\alpha)\beta), t_{kl}(\beta):1\leq i\leq n-1, 1< k+1<l\leq n, \alpha\in\RM, \beta\in R\}.\]\end{lemma}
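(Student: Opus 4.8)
The plan is to establish the two inclusions $\langle X\rangle\subseteq G'$ and $G'\subseteq\langle X\rangle$ separately. The first is the concrete one: I would exhibit every generator in $X$ as a commutator of the standard generators of $G$. For the deep transvections, whenever $k+1<l$ there is an index $j$ with $k<j<l$, and relation (2) gives $[t_{kj}(1),t_{jl}(\beta)]=t_{kl}(\beta)$, so $t_{kl}(\beta)\in G'$ for every $\beta\in R$. For the superdiagonal generators I would use relation (5): conjugation by $d_k(\alpha)$ rescales the $(i,j)$-entry only when $k\in\{i,j\}$, so a direct computation of $[d_i(\alpha),t_{i,i+1}(\beta)]$ and $[d_{i+1}(\alpha),t_{i,i+1}(\beta)]$ yields transvections of the shape $t_{i,i+1}((1-\alpha^{\pm1})\beta)$. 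As $\alpha$ ranges over $\RM$ this is exactly the family $t_{i,i+1}((1-\alpha)\beta)$, so all of $X$ lies in $G'$ and hence $\langle X\rangle\subseteq G'$.

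For the reverse inclusion I would set $N=\langle X\rangle$, observe $N\subseteq G_u$, and prove that (i) $N\trianglelefteq G$ and (ii) $G/N$ is abelian; together these force $G'\subseteq N$. Normality is checked on generators of $N$ against generators of $G$. Conjugation by a diagonal $d_m(\gamma)$ merely rescales the nonzero off-diagonal entry (relation (5)), so it sends $t_{kl}(\beta)$ with $k+1<l$ to another deep transvection and sends $t_{i,i+1}((1-\alpha)\beta)$ to $t_{i,i+1}((1-\alpha)\gamma^{\pm1}\beta)$, both again in $X$. Conjugation by a transvection $t_{pq}(\delta)$ produces $t_{kl}(\beta)$ times the commutator $[t_{kl}(\beta),t_{pq}(\delta)]$, and relation (2) shows this commutator is either trivial or a transvection $t_{kq}(\pm\beta\delta)$ or $t_{pl}(\pm\beta\delta)$ whose index gap is strictly larger than that of the original, hence a deep transvection lying in $N$.

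For (ii) I would verify that every commutator of a pair of generators of $G$ lands in $N$: the $[d,d]$ commutators are trivial by relation (4); the $[t,t]$ commutators are deep transvections by the index-gap argument above; and the mixed commutators $[d_m(\gamma),t_{kl}(\beta)]$ are trivial when $m\notin\{k,l\}$, are deep transvections when $k+1<l$, and are precisely the admissible superdiagonal elements $t_{k,k+1}((1-\gamma^{\pm1})\beta)\in X$ when $l=k+1$. Since $N$ is normal, $G/N$ is generated by the commuting images of these generators, so it is abelian and $G'\subseteq N$; combined with the first inclusion this gives $G'=\langle X\rangle$.

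The main obstacle is not any single computation but the bookkeeping that keeps the index conditions consistent throughout: one must check that every transvection manufactured by a commutator or a conjugation either lands on a position with gap at least $2$ (so it is automatically in $N$) or, when it lands on the superdiagonal, appears only in the constrained form $(1-\alpha)\beta$. This is exactly why the superdiagonal contribution to $G'$ is $\{t_{i,i+1}((1-\alpha)\beta)\}$ rather than all of $t_{i,i+1}(R)$: superdiagonal entries are unreachable through relation (2) and can only be produced by the diagonal action in relation (5), which always carries the factor $1-\alpha^{\pm1}$.
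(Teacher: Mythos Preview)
Your proof is correct and follows essentially the same approach as the paper: both directions hinge on the same computations with relations (2), (4), and (5), reducing everything to the three commutator types $[d,d]$, $[d,t]$, $[t,t]$. The only cosmetic difference is organization: you first verify $N\trianglelefteq G$ explicitly and then observe that the generators of $G/N$ commute, whereas the paper takes an arbitrary commutator $[x,y]$, splits $x=x_1x_2$, $y=y_1y_2$ along the semidirect decomposition, expands via the identity $[x_1x_2,y_1y_2]=[x_1,y_1]^{z_1}[x_1,y_2]^{z_2}[x_2,y_1]^{z_3}[x_2,y_2]^{z_4}$, and checks that each conjugated factor lies in $N$ (invoking normality of $N$ under $G_u$ only at the very end).
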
 

\begin{proof}
Let $N$ denote the subgroup generated by $X$. Each $t_{kl}(\beta)$, with $l-k\geq 2$ is already a commutator by definition, and \[d_i(\alpha^{-1})t_{i,i+1}(-\beta)d_{i}(\alpha)t_{i,i+1}(\beta)=t_{i,i+1}((1-\alpha^{-1})\beta),\]for any $\alpha\in \RM$ and $\beta\in R$, hence $N\leq G'$. To prove the reverse inclusion firstly note that since $G/UT_n(R)$ is abelian, $G'\leq UT_n(R)=G_u$. Now Pick $x,y\in G$ then $x=x_1x_2$ and $y=y_1y_2$ where $x_1,y_1\in D_n(R,\bar{f})$ and $x_2,y_2\in UT_n(R)$. Now 
\begin{align*}[x,y]&=[x_1x_2,y_1y_2]\\
&=[x_1,y_1]^{z_1}[x_1,y_2]^{z_2}[x_2,y_1]^{z_3}[x_2,y_2]^{z_4},\\
&= [x_1,y_2]^{z_2}[x_2,y_1]^{z_3}[x_2,y_2]^{z_4}\end{align*}
for some $z_i\in G$, $i=1, \ldots 4$. The commutator $[x_2,y_2]\in (G_u)'$, where $(G_u)'$ is characteristic in $G_u=UT_n(R)$ so normal in $G$. Therefore $[x_2,y_2]^{z_4}$ is a product of $t_{ij}(\beta)$, $i+1<j$. The commutators $[x_2,y_1]$ and $[x_1,y_2]$ are of the same type. So let us analyze one of them. Indeed $x_2=d_1(\alpha_1)\cdots d_n(\alpha_n)$ and $y_1=t_{12}(\beta_{12})\cdots t_{1n}(\beta_{1n})$. So indeed $[x_2,y_1]$ is a product of conjugates of commutators of type $[d_k(\alpha),t_{ij}(\beta)]$. In case that $j>i+1$ this is conjugate of a $t_{ij}(\beta)\in G_u'$ which was dealt with above and is an element of $N$. It remains to analyze the conjugates of $t=t_{i,i+1}((\alpha-1)(\beta))$. Consider $z=xy$, $x=d_1(\alpha_1)\cdots d_n(\alpha_n)\in E_n$ , $y\in G_u$. Then $t^x= t_{i,i+1}((\alpha-1)\alpha_i^{-1}\alpha_{i+1}\beta) \in N$ and $N$ is normalized by $y$ anyway. This completes the proof.       
\end{proof}

\begin{lemma}\label{utn:defn:lem} If $G=T_n(R)$ where $R$ is a commutative associative ring with unit, then the derived subgroup $G'$ of $G$ is uniformly definable in $G$.\end{lemma}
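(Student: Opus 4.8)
The plan is to realize $G'$ as the set of products of a \emph{bounded} number of commutators, and then to promote this to a uniform definition by a single formula via a first-order ``commutator width'' bound. By Lemma~\ref{G':lemma} applied with trivial cocycles (so that $T_n(R,\bar f)=T_n(R)=G$), the subgroup $G'$ is generated by the set $X$, and each generator is in fact a single commutator: the identity already exhibited in the proof of Lemma~\ref{G':lemma} gives $t_{i,i+1}((1-\alpha^{-1})\beta)=[d_i(\alpha),t_{i,i+1}(\beta)]$ for the superdiagonal generators, while Steinberg relation (2) gives $t_{kl}(\beta)=[t_{k,k+1}(\beta),t_{k+1,l}(1)]$ for $l-k\geq 2$. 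Hence $G'$ is exactly the set of all finite products of commutators of elements of $G$, and it suffices to bound the number of factors needed.

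First I would bound the ``off-superdiagonal'' contribution. The subgroup $(G_u)'=\gamma_2(UT_n(R))$ consists of the unitriangular matrices with vanishing superdiagonal, and every such matrix is a product of at most $\binom{n-1}{2}$ transvections $t_{kl}(\beta)$ with $l-k\geq 2$ (clear the entries level by level, the non-commutativity corrections falling into higher levels handled later), each of which is a single commutator by the identity above. The remaining, genuinely delicate part is the superdiagonal: an element of $G'$ is a product of the above together with factors $t_{i,i+1}(\gamma_i)$ whose coefficients lie in the additive subgroup $I=\langle(1-\alpha)\beta:\alpha\in\RM,\beta\in R\rangle$, which is the ideal of $R$ generated by $\{1-\alpha:\alpha\in\RM\}$. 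A direct computation of a general commutator $[x,y]$, decomposed into its diagonal and unipotent parts as in the proof of Lemma~\ref{G':lemma}, shows that a \emph{single} commutator can realize at each superdiagonal position, independently, a coefficient of the form $(1-\mu)b+(1-\nu)c$ with $\mu,\nu\in\RM$ and $b,c\in R$ arbitrary. Consequently, if $I=\sum_{j=1}^{k}(1-\alpha_j)R$ for finitely many units $\alpha_j$, then at most $k$ additional commutators suffice to produce all admissible superdiagonal data simultaneously, yielding a total width bound $N=N(n,R)<\infty$.

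With a finite width $N$ in hand, I would obtain uniform definability as follows. The sentence asserting that every product of $N+1$ commutators equals a product of $N$ commutators is first-order, and it holds in $G$ precisely because $G$ has commutator width $\leq N$; therefore it holds in every $H\equiv G$. In any such $H$ the derived subgroup is then defined by the single existential formula
\[
\phi_N(x)\;=\;\exists a_1\,\exists b_1\cdots\exists a_N\,\exists b_N\;\Big(x=[a_1,b_1]\cdots[a_N,b_N]\Big),
\]
so $\phi_N$ defines $G'$ throughout $Th(G)$; this is the uniform definability claimed. (Alternatively one may combine this with the already-established definability of $G_u=Fitt(G)$ from Corollary~\ref{le:FittingNilpotent}, defining $G'$ inside the definable nilpotent group $G_u$.)

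The main obstacle is exactly the finiteness and, for full uniformity across rings, the uniformity of the width bound $N$, which reduces to the ring-theoretic statement that the ideal $I$ is generated by boundedly many unit-differences $1-\alpha$. This is where the hypotheses on $R$ enter: it holds whenever $R$ is Noetherian, in particular for fields (where some $1-\alpha$ is itself a unit, so $I=R$ and $N$ is smallest), for the rings of integers of number fields, and more generally for characteristic-zero domains finitely generated over their prime ring. I would emphasize that one cannot expect a single $(1-\alpha)\beta$ to represent an arbitrary element of $I$ -- for instance $3-u\notin\{(1-\alpha)\beta\}$ in $\mathbb{Z}[u^{\pm1}]$ although $3-u\in I$ -- which is precisely why passing to the derived subgroup of the unipotent radical and analysing the ideal structure of $I$ is unavoidable, and why the argument requires $I$ to be finitely generated by such differences rather than merely a single one.
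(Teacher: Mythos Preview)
Your approach is the same as the paper's: bound the commutator width of $G$ and use the resulting existential formula $\phi_N$ to define $G'$ uniformly across $Th(G)$. The paper's proof is extremely brief: it observes that every element of $G'$ has a unique expression as a product of at most $M=n(n-1)/2$ transvections and immediately declares commutator width $M$, i.e.\ it treats each transvection in that normal form as a single commutator. For $t_{kl}(\beta)$ with $l-k\ge 2$ this is clear from the Steinberg relation, and the paper leaves it at that.

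You have put your finger on exactly the point the paper glosses over: for the superdiagonal factors $t_{i,i+1}(\gamma_i)$ one needs $\gamma_i$ to be realizable as the $(i,i+1)$--entry of a single commutator, and your computation shows this entry has the constrained shape $(1-\mu)b+(1-\nu)c$. So the paper's width claim $M$ tacitly assumes $I=\bigcup_{\mu,\nu\in R^\times}(1-\mu,1-\nu)$, which it does not verify. Your replacement argument---assuming $I$ is generated as an ideal by finitely many unit--differences $1-\alpha_j$---is correct and yields a (possibly larger) finite width, hence uniform definability; this is genuinely more careful than the paper's one--line assertion.

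The only gap in your write-up is the one you yourself name: the lemma is stated for an arbitrary commutative ring with unit, while your argument needs $I$ finitely generated by elements $1-\alpha$. That hypothesis is automatic for Noetherian $R$ and in particular for every ring appearing in the paper's main theorems (characteristic--zero fields and their rings of integers), so for the purposes of the paper nothing is lost. But as stated, neither your argument nor the paper's terse one actually covers, say, $R=\mathbb{Z}[x_1^{\pm1},x_2^{\pm1},\ldots]$ without further work.
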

\begin{proof} Recall the description of $G'$ from Lemma~\ref{G':lemma}. Note that every element of $G'$ can be written in a unique was as a product of transvections. This means that $G'$ is a verbal subgroup of finite width, exactly $M=n(n-1)/2$. Therefore $G'$ is definable in $G$ uniformly with respect to $Th(G)$ by the $L$-formula, 
\begin{equation} \label{PhiG':eqn} \Phi_{G'}(x)\define\exists x_1, \ldots, x_M,y_1, \ldots, y_M ( x=[x_1,y_1]\cdots [x_M,y_M]).\end{equation}
The same $L$-formula clearly defines $H'$ in an elementary equivalent copy $H$ of $G$. \end{proof}
\begin{lemma}\label{Fitt-desc:lem} For $G= T_n(R,\bar{f})$, $Fitt(G)=UT_n(R)\cdot Z(G)$.\end{lemma}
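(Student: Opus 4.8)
The plan is to prove the two inclusions separately; the inclusion $UT_n(R)\cdot Z(G)\subseteq Fitt(G)$ is routine, while the reverse inclusion carries all the content.

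First I would dispose of the easy direction by checking that $M:=UT_n(R)\cdot Z(G)$ is a normal nilpotent subgroup, whence $M\leq Fitt(G)$ automatically. Since $G=E_n\ltimes_{\psi_{n,R}}G_u$ with $G_u\cong UT_n(R)$ normal in $G$ and $Z(G)$ central (hence normal), and since $Z(G)$ consists of the scalar matrices $diag(\alpha)$ whereas the only scalar unitriangular matrix is $I_n$, we have $UT_n(R)\cap Z(G)=\{I_n\}$ and $Z(G)$ centralizes $UT_n(R)$. Thus $M$ is the internal direct product $UT_n(R)\times Z(G)$, a normal subgroup that is nilpotent of class $n-1$, so $M\leq Fitt(G)$.

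For the reverse inclusion it suffices to show that \emph{every} normal nilpotent subgroup $N$ of $G$ is contained in $M$; since $Fitt(G)$ is generated by such subgroups this gives $Fitt(G)\subseteq M$ (and, as a byproduct, re-proves $G\in\CP$ for these deformed groups). Write an arbitrary $g\in N$ as $g=du$ with $d=diag[\alpha_1,\ldots,\alpha_n]\in E_n$ and $u\in G_u$; I claim $d$ must be scalar, so that $g\in M$. The crucial point is that the deformation leaves the conjugation action of the diagonal part on $G_u$ unchanged (recall $\psi_{n,R}=\phi_{n,R}$ on $B\times Z(G)$), so relation (5) of Lemma~\ref{TN-pres:lem} still yields $g^{-1}t_{i,i+1}(\mu)g\equiv t_{i,i+1}(\lambda_i\mu)\pmod{\gamma_2(G_u)}$, where $\lambda_i=\alpha_i^{-1}\alpha_{i+1}$ and $\gamma_2(G_u)$, generated by the $t_{kl}$ with $l-k\geq 2$, is characteristic in $G_u$ and hence normal in $G$.

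Now suppose $d$ is not scalar. Then some adjacent ratio $\lambda:=\lambda_i\neq 1$; fix such an $i$, set $s=t_{i,i+1}(1)$, and form the left-normed iterated commutators $c_1=[s,g]$ and $c_{m+1}=[c_m,g]$. Using normality of $N$ one sees $c_1=(s^{-1}g^{-1}s)g\in N$ (note $s\notin N$ in general, so this first step genuinely needs normality), and then $c_m\in\gamma_m(N)$ for all $m\geq 1$. Feeding the congruence above into the recursion gives, by induction, $c_m\equiv t_{i,i+1}\big((\lambda-1)^m\big)\pmod{\gamma_2(G_u)}$. Since $R$ is a characteristic zero integral domain and $\lambda\neq 1$, we have $(\lambda-1)^m\neq 0$ for every $m$, so $c_m\notin\gamma_2(G_u)$ and in particular $c_m\neq I_n$. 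Hence $\gamma_m(N)\neq 1$ for all $m$, contradicting the nilpotency of $N$. Therefore $d$ is scalar, $N\subseteq M$, and the lemma follows.

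I expect the commutator computation of the last paragraph to be the main obstacle: one must verify carefully that repeatedly commuting with $g$ multiplies the super-diagonal $(i,i+1)$-entry by the \emph{fixed} unit $\lambda$ modulo $\gamma_2(G_u)$, and that this entry never vanishes — the non-vanishing is exactly where the absence of zero divisors (the integral-domain hypothesis) is used. The secondary bookkeeping point is to confirm that $c_1$ lands in $N$ by normality while the later $c_m$ climb the lower central series $\gamma_m(N)$, so that nilpotency of $N$ can be contradicted.
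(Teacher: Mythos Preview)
Your argument is correct and shares the paper's core idea—produce an infinite chain of nonvanishing iterated commutators between a non-scalar diagonal element and a transvection, using that $R$ is an integral domain—but the technical packaging differs. The paper first enlarges $N$ to $M=N\cdot\bigl(UT_n(R)\cdot Z(G)\bigr)$, which is again normal and nilpotent (implicitly by Fitting's theorem); since $UT_n(R)\subseteq M$ one can strip the unipotent part and work with a \emph{pure} diagonal $x\in M$ with $\alpha_i\neq\alpha_j$, computing $[x,\ldots,x,t_{ij}]$ exactly as a single transvection. You instead stay inside $N$, keep the mixed element $g=du$, and absorb the unipotent tail by computing modulo $\gamma_2(G_u)$, using normality of $N$ only to land $c_1=[s,g]$ in $N$. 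Your route avoids invoking Fitting's theorem and is in that sense more self-contained; the paper's route gives cleaner exact commutator formulas and does not need to pass to the abelianization $G_u/\gamma_2(G_u)$. Either way the contradiction is the same: $(\lambda-1)^m\neq 0$ for all $m$ in an integral domain.
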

\begin{proof}
Clearly $Fitt(G)\geq UT_n(R)\cdot Z(G)$. Assume $N$ is a normal nilpotent subgroup of $G$. Then the product $M$ of $N$ and $UT_n(R)\cdot Z(G)$ is also normal in $G$ and nilpotent. If $M$ has an element not already contained in $UT_n(R)\cdot Z(G)$ then $M$ has to contain some $d_1(\alpha_1)\cdots d_n(\alpha_n)$ where for a pair $1\leq i\neq j\leq n$, $\alpha_i\neq \alpha_j$. Pick such an element, say $x$. Then
\begin{align*}
x^{-1}t_{ij}(-\beta)x&= d_n(\alpha^{-1}_n)\cdots d_1(\alpha_1^{-1})t_{ij}(-\beta)d_1(\alpha_1)\cdots d_n(\alpha_n)\\
&=d_j(\alpha^{-1}_j) d_i(\alpha_i^{-1})t_{ij}(-\beta)d_i(\alpha_i) d_j(\alpha_j)\\
&=t_{ij}(-\alpha_i^{-1}\alpha_j\beta)
\end{align*} 
So $[x,t_{ij}(\beta)]=t_{ij}(\beta(-\alpha^{-1}_i\alpha_j+1))$. If $\beta\neq 0$ the assumption that $\alpha_i\neq \alpha_j$ and $R$ being an integral domain shows that $[x,t_{ij}(\beta)]\neq I_n$. A simple inductive argument shows that for any positive integer $m$
\[[\underbrace{x,\ldots,x}_{m-\text{times}},t_{ij}]\neq I_n,\]
which contradicts the nilpotency of $M$. So $Fitt(G)=UT_n(R)\cdot Z(G)$. 
\end{proof}

\section{Proof of Theorem~\ref{mainthm}.}\label{mainthm:sec}
We begin the proof of Theorem~\ref{mainthm} by proving a few auxiliary lemmas and reviewing some crucial results on the model theory of $UT_n(R)$.  

\begin{lemma}[\cite{beleg99}, Proposition 1.7.1]\label{utn-coord:lem} Consider the group $N=UT_n(R)$, where $R$ is a commutative associative ring with unit. Then for each $1\leq i<j \leq n$ one-parameter subgroups $T_{ij}=\{t_{ij}(\alpha):\alpha\in R\}$ are definable in $N$, unless $j=i+1$, Indeed the ring $R$ and its action on each such $t_{ij}$ is interpretable in $N$ (with respect to the constants $\bar{t}$). If $j=i+1$ then the subgroup $C_{ij}=T_{ij} \cdot Z(N)$ is definable in $N$.\end{lemma}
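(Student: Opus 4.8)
The plan is to build a definable ``coordinate scaffold'' on $N=UT_n(R)$ out of its central series and then read off $R$ from the commutator calculus of the Steinberg relations (the tables in Lemma~\ref{TN-pres:lem}). The ingredients that cost nothing are: the centre $Z(N)=T_{1n}\cong R^+$ is $0$-definable, and each term of the lower central series $\gamma_c(N)=\prod_{j-i\ge c}T_{ij}$ is a verbal subgroup of finite width, hence definable by exactly the argument used for $G'=\gamma_2(N)$ in Lemma~\ref{utn:defn:lem}. This gives a definable filtration $N\ge\gamma_2(N)\ge\cdots\ge\gamma_{n-1}(N)=Z(N)$ whose factors $\gamma_c(N)/\gamma_{c+1}(N)\cong\bigoplus_{j-i=c}T_{ij}$ are glued by the bilinear commutator maps $[t_{ik}(\alpha),t_{kj}(\beta)]=t_{ij}(\alpha\beta)$, which is where the multiplication of $R$ is encoded. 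Throughout I fix the standard transvections $\bar t=\{t_{ij}(1):1\le i<j\le n\}$ as the parameters of the interpretation; the symmetry of the underlying ``path'' of adjacent positions makes parameters genuinely necessary to orient the coordinates.

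Granting the level-$1$ cosets $C_{i,i+1}=T_{i,i+1}\cdot Z(N)$, the higher coordinate subgroups then fall out cleanly by induction on the level $c=j-i$. For $j>i+1$ I would \emph{define}
\[
T_{ij}=\{\,[y,t_{j-1,j}(1)] : y\in P_{i,j-1}\,\},
\]
where $P_{i,j-1}=T_{i,j-1}$ when $j-1>i+1$ (already definable, since its level is $c-1$) and $P_{i,j-1}=C_{i,i+1}$ when $j=i+2$. By the commutator table $[t_{i,j-1}(\alpha),t_{j-1,j}(1)]=t_{ij}(\alpha)$, and any central or higher-level factor carried by $y$ is absorbed in the commutator, so the image is \emph{exactly} $T_{ij}$ and no coset ambiguity survives. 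The same construction applied at level $1$ only ever sees its input modulo the centre, which is precisely why at the bottom level one cannot separate $t_{i,i+1}(\alpha)$ from $t_{i,i+1}(\alpha)z$, $z\in Z(N)$, and so only the coset union $C_{i,i+1}$ is available there.

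It remains to define the base cosets $C_{i,i+1}$. Here I would work in $N^{\mathrm{ab}}=N/\gamma_2(N)\cong(R^+)^{n-1}$ and single out the $(i,i+1)$-coordinate line using the induced pairing $N^{\mathrm{ab}}\times N^{\mathrm{ab}}\to\gamma_2(N)/\gamma_3(N)$: two coordinate lines pair nontrivially iff the positions are adjacent, and the parameters $\bar t$ break the reflection symmetry of this incidence pattern, so each line is definable over $\bar t$. Its full preimage in $N$ is $T_{i,i+1}\gamma_2(N)$, and the last step is to cut this down to $T_{i,i+1}Z(N)$ by imposing a further definable condition forcing the $\gamma_2(N)$-component into $Z(N)$. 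Once the $C_{i,i+1}$ are in hand, the interpretation of the ring is routine: take the universe to be $Z(N)=T_{1n}$ with $+$ given by group multiplication and $1$ read off as $t_{1n}(1)$, and define $\times$ by transporting a bilinear commutator pairing that lands in $Z(N)$ --- for $n\ge 5$ one may use $[\,\cdot,\cdot\,]\colon T_{1,k}\times T_{k,n}\to Z(N)$ between the definable subgroups with $3\le k\le n-2$, pulled back along the definable additive isomorphisms $T_{1,k}\cong Z(N)\cong T_{k,n}$ given by iterated commutators with frame elements, and for small $n$ one uses the $C_{i,i+1}$ as proxies. In every case central factors drop out of the relevant commutators, which is what makes the product independent of the chosen lifts; the scalar action $(\alpha,t_{ij}(\beta))\mapsto t_{ij}(\alpha\beta)$ is interpreted the same way, exhibiting each $T_{ij}$ as a free rank-one $R$-module.

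I expect the real difficulty to be this last cut in the third step --- writing a first-order condition over $\bar t$ that pins $C_{i,i+1}=T_{i,i+1}Z(N)$ exactly inside $T_{i,i+1}\gamma_2(N)$, detecting whether the deep component of an element is central when naive commutator tests are blind to it --- together with checking that the ensuing ring operations on $Z(N)$ are well defined on coset representatives. The impossibility of descending further, from $C_{i,i+1}$ to $T_{i,i+1}$, is intrinsic (central factors are simply invisible to the commutator calculus at level $1$), and that is exactly the reason the statement stops at $C_{i,i+1}$ there; once the scaffold and the well-definedness verifications are secured, interpreting $(R,+,\times)$ and its module action on the $T_{ij}$ is bilinear bookkeeping.
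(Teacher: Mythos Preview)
The paper does not prove this lemma at all: it is quoted verbatim from Belegradek (\cite{beleg99}, Proposition~1.7.1), so there is no in-paper argument to compare against. What you have written is a reasonable outline of how one would recover Belegradek's result, and the inductive step --- defining $T_{ij}$ for $j-i\ge 2$ as $\{[y,t_{j-1,j}]:y\in P_{i,j-1}\}$, with central factors in $P_{i,j-1}$ killed by the commutator --- is correct. The interpretation of $R$ on $Z(N)=T_{1n}$ via bilinear commutator pairings is likewise fine once the scaffold is in place.

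The genuine gap is exactly where you flag it: you never actually define $C_{i,i+1}$. Your proposed route --- isolate the line $T_{i,i+1}\gamma_2(N)/\gamma_2(N)$ in $N^{\mathrm{ab}}$ via the pairing with $\gamma_2/\gamma_3$, then ``cut down'' to $T_{i,i+1}Z(N)$ --- is not completed, and the first half already requires care: for small $n$ the orthogonality conditions in $N^{\mathrm{ab}}$ do not isolate a single coordinate line (e.g.\ for $n=4$ the conditions you can impose without killing the $i=1$ coordinate leave the $i=3$ coordinate free). More seriously, the ``cut'' from $T_{i,i+1}\gamma_2(N)$ down to $T_{i,i+1}Z(N)$ is not supplied at all, and the most natural guess, $C_{i,i+1}=Z(C_N(t_{i,i+1}))$, works only for the endpoint indices $i=1,n-1$; for interior $i$ this centre is strictly larger than $C_{i,i+1}$ (e.g.\ in $UT_5(R)$ one has $T_{25}\subset Z(C_N(t_{23}))$ but $T_{25}\not\subset C_{23}$). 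So a specific first-order recipe, tailored to each $i$ and using the full frame $\bar t$, is required here and is the heart of the proof; until you write one down, the argument is incomplete. Belegradek's proof supplies exactly these centralizer/commutator conditions, and that is why the present paper simply cites the result rather than reproving it.
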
 
In general the fact that $C_{i,i+1}=T_{i,i+1}\cdot Z(N)$  from Lemma~\ref{utn-coord:lem} is a split abelian extension of $Z(H)\cong R^+$ by $C_{i,i+1}/Z(G)\cong R^+$ is not a first-order property. As O.V. Belegradek shows in \cite{beleg99} a group elementarily equivalent to a $UT_n(R)$ is almost isomorphic to a $UT_n(S)$ for some $S\equiv R$ except that in $H$, $C_{i,i+1}$ might be isomorphic to a non-split extension of $S^+$ by $S^+$. Such a group is called a \emph{quasiunitriangular} group and is denoted by $UT_n(S,g_1, \ldots, g_{n-1})$ for symmetric 2-cocyles $g_i\in S^2(S^+,S^+)$.    
\begin{thm}[\cite{beleg99}, Proposition 1.8.1 and 2.2.7]\label{utn-corrdinatization:thm}Let $R$ be a commutative associative ring with unit. Then, there is a formula $ \Phi_{UT_n}(\bar{x})$ of $L$ which holds on the tuple of  transvections $t_{ij}$, with some specific ordering, in $N=UT_n(R)$ and if $H\equiv N$ is a group, then $H\models \exists \bar{x}\Phi_{UT_n}(\bar{x})$ implies that $H\cong UT_n(S,\bar{g})$ for some ring $S\equiv R$ and some symmetric 2-cocycles $g_i\in S^2(S^+,S^+)$.\end{thm}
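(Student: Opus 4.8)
The plan is to follow Belegradek's \emph{coordinatization} strategy: first interpret the ring $R$ inside $N=UT_n(R)$, then package that interpretation together with the Steinberg relations into a single formula $\Phi_{UT_n}(\bar{x})$ whose satisfaction forces the ambient group to be assembled, coordinate by coordinate, out of the interpreted ring. First I would fix the tuple $\bar{t}$ to list the generating transvections; it suffices to take the ``simple'' ones $t_{i,i+1}$, $1\leq i\leq n-1$, since every other $t_{ij}$ is recovered from these by iterated Steinberg commutators via $[t_{ij}(\alpha),t_{jl}(\beta)]=t_{il}(\alpha\beta)$ (relation (2) of Lemma~\ref{TN-pres:lem}), though one may equally list all $t_{ij}$ in a fixed order. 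By Lemma~\ref{utn-coord:lem} the one-parameter subgroups $T_{ij}$ with $j>i+1$ are definable with parameters $\bar{t}$, and on the ``longest'' subgroup $T_{1n}$ (definable since $n\geq 3$) I would interpret $R^+$ as the group itself and the ring multiplication through the commutator map. This yields, uniformly in $\bar{t}$, an interpretation of $R$ together with its scalar action on each definable $T_{ij}$.

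The formula $\Phi_{UT_n}(\bar{x})$ then asserts, of a tuple $\bar{x}=(x_1,\ldots,x_{n-1})$, that: (i) the structure interpreted from $\bar{x}$ by the formulas of the previous paragraph is a commutative associative ring with unit; (ii) the $x_i$ and the elements built from them by Steinberg commutators satisfy relations (1) and (2) of Lemma~\ref{TN-pres:lem} with respect to that interpreted multiplication; and (iii) $\bar{x}$ generates the whole group, while the center (defined by commuting with everything) and the derived subgroup (definable by Lemma~\ref{utn:defn:lem}) have the prescribed description. By construction $N\models\Phi_{UT_n}(\bar{t})$, which establishes the first assertion.

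Now suppose $H\equiv N$ and $H\models\exists\bar{x}\,\Phi_{UT_n}(\bar{x})$, witnessed by a tuple $\bar{s}$. Applying the same interpretation formulas to $\bar{s}$ produces a ring $S$ interpreted in $H$; since the interpretation is uniform and $H\equiv N$, every first-order sentence true of $R$ transfers to $S$, so $S\equiv R$. The definable subgroups $T_{ij}$ (for $j>i+1$) together with the definable subgroups $C_{i,i+1}=T_{i,i+1}\cdot Z(H)$ then realize $H$ along the template of $UT_n$ over $S$: reading off the group multiplication against the interpreted coordinates gives the desired isomorphism, up to the subtlety described below.

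The main obstacle is the behavior of the subgroups $C_{i,i+1}$. In $N$ itself the extension $Z(N)\to C_{i,i+1}\to C_{i,i+1}/Z(N)$ splits, with $T_{i,i+1}\cong R^+$ a complement; but splitting is not first-order expressible, so in $H$ each $C_{i,i+1}$ may only be an abelian---hence \emph{symmetric}---extension of $Z(H)\cong S^+$ by $S^+$, recorded by a cocycle $g_i\in S^2(S^+,S^+)$. The delicate part of the argument is to verify that these $g_i$ are exactly the defining data of a quasiunitriangular group, and that once $S$ and $\bar{g}$ are extracted the interpreted coordinatization assembles into a genuine isomorphism $H\cong UT_n(S,\bar{g})$ rather than merely a definable bijection---that is, that \emph{all} group relations of $H$ are accounted for by the Steinberg relations over $S$ together with the extension cocycles $g_i$. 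This is where one must combine the definability results for $T_{ij}$ and $C_{i,i+1}$ with a careful bookkeeping of the extension classes, and it is the heart of Belegradek's Proposition~2.2.7.
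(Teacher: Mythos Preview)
The paper does not supply a proof of this theorem: it is stated purely as a citation of Belegradek's Propositions~1.8.1 and~2.2.7, with no argument given beyond the reference. There is therefore no proof in the paper to compare your proposal against.

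That said, your outline is a faithful sketch of Belegradek's strategy as the paper summarizes it in the surrounding text: interpret $R$ on the central one-parameter subgroup $T_{1n}$ via the commutator map (this is Lemma~\ref{utn-coord:lem}), bundle the interpretation together with the Steinberg relations into a single formula with parameters $\bar{t}$, transfer to $H\equiv N$ to extract a ring $S\equiv R$, and then observe that the only datum that fails to be first-order is the splitting of each extension $Z(N)\hookrightarrow C_{i,i+1}\twoheadrightarrow C_{i,i+1}/Z(N)$, which is exactly what the symmetric 2-cocycles $g_i\in S^2(S^+,S^+)$ record. You have correctly identified both the mechanism and the obstruction.

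One minor caution: your clause~(iii), ``$\bar{x}$ generates the whole group,'' is not literally a first-order condition. What one actually writes down is that every element of the group lies in the product (in a fixed order) of the definable one-parameter subgroups $T_{ij}$ and $C_{i,i+1}$; this \emph{is} first-order because the number of factors is bounded by $n(n-1)/2$, and it is the form in which the paper uses the result (compare the proof of Lemma~\ref{utn:defn:lem}).
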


We recall that $-I_n$ denotes the unique element of order 2 of $Z(T_n(R,\bar{f}))$ when $R$ is a characteristic zero integral domain.

\begin{lemma}\label{Gupm:lem} Assume $R$ is a characteristic zero integral domain, $G=T_n(R)$. Then  there is an $L$-formula $\Phi_{G_{u^\pm}}(x)$ that defines $G_{u^\pm}=UT_n(R)\times \ZZ $ in $G$ if $G'\neq UT_n(R)$. Otherwise $G_u=UT_n(R)$ is definable in $G$ as $G'$. In any case the definitions are uniform with respect to $Th(G)$. Hence the subgroups $T_{ij}$, $j-i\geq 2$ are definable in $G$ relative a constant $t_{ij}$. IF $G'=G_u$ the subgroups $C_{i,i+1}$ are definable in $G$, otherwise, the subgroups $\pm C_{i,i+1}=\ZZ\cdot C_{i,i+1}$ are definable in $G$, in both cases relative to the constants $t_{i,i+1}$. \end{lemma}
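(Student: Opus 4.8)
The plan is to realize $G_{u^\pm}$ as a Boolean combination of two subgroups already known to be definable uniformly in $Th(G)$, namely the Fitting subgroup and the derived subgroup. By Lemma~\ref{Fitt-desc:lem}, $Fitt(G)=UT_n(R)\cdot Z(G)$, and since $UT_n(R)$ is torsion-free and normal with $UT_n(R)\cap Z(G)=\{I_n\}$ while $Z(G)$ is central, this is an internal direct product $Fitt(G)=UT_n(R)\times Z(G)$. As $G=T_n(R)\in\CP$ and $Fitt(G)$ is nilpotent of class $n-1$, Corollary~\ref{le:FittingNilpotent} gives a formula $\Phi_{ncl,n-1}(x)$ defining $Fitt(G)$ uniformly in $Th(G)$; by Lemma~\ref{utn:defn:lem} the derived subgroup is defined by $\Phi_{G'}(x)$ uniformly in $Th(G)$, and $G'\leq UT_n(R)$ because $G/UT_n(R)\cong(\RM)^n$ is abelian.

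The arithmetic heart of the argument is that $UT_n(R)/G'$ has exponent dividing $2$. Indeed, by Lemma~\ref{G':lemma} the subgroup $G'$ contains every $t_{kl}(\beta)$ with $l-k\geq 2$ together with $t_{i,i+1}(I)$, where $I$ is the ideal of $R$ generated by $\{1-\alpha:\alpha\in\RM\}$; hence $UT_n(R)/G'\cong(R^+/I)^{n-1}$. Since $-1\in\RM$ we have $2=1-(-1)\in I$, so $R^+/I$ is annihilated by $2$ and $UT_n(R)/G'$ has exponent dividing $2$. In particular $u^2\in G'$ for every $u\in UT_n(R)$.

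Now I split into the two cases. If $G'=UT_n(R)$, then $\Phi_{G'}$ already defines $G_u=UT_n(R)$ and there is nothing more to do. If $G'\neq UT_n(R)$, I claim
\[
G_{u^\pm}=UT_n(R)\times\ZZ=\{\,g\in Fitt(G):g^2\in G'\,\}.
\]
Writing $g=(u,\alpha I_n)\in UT_n(R)\times Z(G)$ we have $g^2=(u^2,\alpha^2 I_n)$; as $G'\leq UT_n(R)$ and $G'\cap Z(G)=\{I_n\}$, the condition $g^2\in G'$ forces $\alpha^2=1$, i.e.\ $\alpha=\pm1$ because $R$ is a domain, while $u^2\in G'$ holds automatically by the exponent-$2$ fact. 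Hence $g^2\in G'$ exactly when $\alpha\in\{\pm1\}$, proving the claim, so $\Phi_{G_{u^\pm}}(x):=\Phi_{ncl,n-1}(x)\wedge\Phi_{G'}(x^2)$ does the job and is uniform in $Th(G)$ since both conjuncts are. The dividing line between the two cases is itself elementary: it is recorded by the sentence asserting $Fitt(G)=G'\cdot Z(G)$ (equivalently, every element of $Fitt(G)$ is a product of an element of $G'$ and a central element), which is equivalent to $G'=UT_n(R)$; thus $Th(G)$ singles out the correct formula uniformly.

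For the consequences I apply Belegradek's coordinatization (Lemma~\ref{utn-coord:lem}) inside $N=UT_n(R)$, where the $T_{ij}$ with $j-i\geq 2$ and the $C_{i,i+1}=T_{i,i+1}\cdot Z(N)$ are definable relative to the constants $t_{ij},t_{i,i+1}$. When $G'=G_u$ I relativize these definitions to the definable subgroup $UT_n(R)=G'$ directly. When $G'\neq G_u$ only $G_{u^\pm}$ is available, but $\ZZ$ is a finite central subgroup and $N\cong G_{u^\pm}/\ZZ$ is an interpretable quotient; pulling Belegradek's formulas back along $G_{u^\pm}\to G_{u^\pm}/\ZZ$ yields $T_{ij}\cdot\ZZ$ and $C_{i,i+1}\cdot\ZZ=\pm C_{i,i+1}$. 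For $j-i\geq 2$ the sign ambiguity is removed by intersecting with $G'$: since $T_{ij}\subseteq G'$ while $(-I_n)t_{ij}(\beta)\notin UT_n(R)\supseteq G'$, one gets $(T_{ij}\cdot\ZZ)\cap G'=T_{ij}$, so $T_{ij}$ is definable relative to $t_{ij}$. No such intersection is possible for $C_{i,i+1}$ when $T_{i,i+1}\not\subseteq G'$, which is exactly why only $\pm C_{i,i+1}$ is asserted there. I expect the main obstacle to be precisely this exponent-$2$ computation for $UT_n(R)/G'$ together with the observation that squaring collapses $Z(G)$ onto its $2$-torsion $\ZZ$; the remaining work is bookkeeping, apart from the mildly delicate transfer of Belegradek's definitions across the finite central subgroup $\ZZ$, where one must track when the sign ambiguity can and cannot be eliminated.
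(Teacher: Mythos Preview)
Your proof is correct and follows essentially the same approach as the paper: both define $G_{u^\pm}$ as the set of $x\in Fitt(G)$ with $x^2\in G'$ (the paper writes the logically equivalent $x\in Fitt(G)\wedge(x\in G'\vee(x\notin G'\wedge x^2\in G'))$), and both reduce the remaining claims to Belegradek's Lemma~\ref{utn-coord:lem}. You supply considerably more detail than the paper does---in particular the exponent-$2$ computation for $UT_n(R)/G'$ via $2=1-(-1)\in I$, the explicit verification that squaring cuts $Z(G)$ down to $\ZZ$, and the intersection $(T_{ij}\cdot\ZZ)\cap G'=T_{ij}$ for $j-i\ge 2$---all of which the paper leaves implicit.
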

\begin{proof}
To prove (a) recall that $G'$ and $Fitt(G)$ are both uniformly definable in $G$. Also recall the descriptions of those subgroups from Lemma~\ref{G':lemma} and~\ref{Fitt-desc:lem}. Either $G'=G_u$ and we let $\Phi_{G_{u^\pm}}=\Phi_{G'}$. Or $G'< G_u$ and we let
\begin{equation}
\label{PhiGupm:eqn} \Phi_{G_{u^\pm}}(x)\define x\in Fitt(G) \wedge ( x\in G' \vee (x\notin G' \wedge x^2\in G')).
\end{equation} The rest of the statement follows from Lemma~\ref{utn-coord:lem}. 
\end{proof}
\begin{lemma}\label{(d,t):lem} Assume $R$ is a characteristic zero integral domain and $G=T_n(R)$. There exists an $L$-formula $\Phi_d(\bar{x},\bar{y})$ which holds on $(\bar{d},\bar{t})$ in $G$ and $\Phi_d(\bar{d},\bar{t})$ expresses that: 
\begin{enumerate}
\item $\ds( \bigwedge_{1\leq i< j \leq n}\Phi_{G_{u^{\pm}}}(t_{ij})) \wedge \Phi_{UT_n}(\bar{t}\ZZ)$, where $\bar{t}\ZZ$ denotes the ordered tuple of the cosets $t_{ij}\ZZ$.
\item  $d_k^2=1$, for all $1\leq k \leq n$,    
\item $[d_k,d_l]=1$, for all $1\leq k,l\leq n$
\item $d_1\cdots d_n=-I_n\in Z(G)$, where $-I_n$ is the unique element of order 2 in $Z(G)$
\item 
$\forall x\in T_{ij}, 1\leq i+1 < j\leq n$
\begin{equation*}\label{conj-actx:eq1b}
 d_k xd_k=\left\{\begin{array}{ll}
 x  & \text{if } k\neq i, j \\
 x^{-1} &\text{if } k=i \text{ or } k=j 
\end{array}\right.
\end{equation*}
\item for all $i=1,\ldots, n-1$
and $\forall x\in \pm C_{i,i+1},  \exists!~ \delta(x,d_k)\in Z(G')$ such that
\begin{equation*}\label{conj-actx:eq2}
 d_k xd_k=\left\{\begin{array}{ll}
 x\delta(x,d_k)  & \text{if } k\neq i,i+1 \\
 x^{-1}\delta(x,d_k) &\text{if } k=i \text{ or } k=i+1 
\end{array}\right.
\end{equation*}
\item  The subgroup defined by
\[\pm T_{12}\define \{x\in \pm C_{12}: x^2\in [d_2,\pm C_{12}]\},\]
satisfies: $ \pm C_{12} = \pm T_{12} \cdot Z(G')$, and $\pm T_{12} \cap Z(G')=I_n.$
\item for all $2\leq i \leq n-1$, the subgroup defined by
\[\pm T_{i,i+1}\define \{x\in \pm C_{i,i+1}: x^2\in [d_i,\pm C_{i,i+1}]\},\]
satisfies: $ \pm C_{i,i+1} = \pm T_{i,i+1} \cdot Z(G')$ , and $\pm T_{i,i+1} \cap Z(G')=I_n.$  
\end{enumerate}
\end{lemma}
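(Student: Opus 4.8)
The plan is to take $\Phi_d(\bar x,\bar y)$ to be simply the conjunction of first-order formalizations of clauses (1)--(8), and then to check two things: that every clause is expressible in $L$ using subgroups already shown to be definable, and that the resulting formula holds on the standard generating tuple $(\bar d,\bar t)$ with $d_i=d_i(-1)$ and $t_{ij}=t_{ij}(1)$. Expressibility is where the earlier machinery enters. Clause (1) is built verbatim from the formula $\Phi_{G_{u^\pm}}$ of Lemma~\ref{Gupm:lem} and the coordinatization formula $\Phi_{UT_n}$ of Theorem~\ref{utn-corrdinatization:thm}, the latter applied to the cosets $t_{ij}\ZZ$ after noting that $\ZZ$ is definable as the $2$-torsion of $Z(G)$. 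By Lemma~\ref{utn-coord:lem} the subgroups $T_{ij}$ ($j-i\geq 2$), $C_{i,i+1}$, and hence $\pm C_{i,i+1}=\ZZ\cdot C_{i,i+1}$, are definable relative to the constants $\bar t$, while $Z(G')$ is definable as the center of the uniformly definable $G'$ (Lemma~\ref{utn:defn:lem}). Thus each relativized quantifier ``$\forall x\in T_{ij}$'', ``$\forall x\in\pm C_{i,i+1}$'', ``$\exists!\,\delta\in Z(G')$'' in clauses (5)--(8) becomes an honest $L$-formula.

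The verification that $(\bar d,\bar t)$ satisfies the clauses is a direct computation governed by Equation~\ref{conj-act:eq} together with the identity $d_k=d_k(-1)=d_k^{-1}$. Clauses (2)--(4) are immediate, since $d_k(-1)^2=d_k(1)=I_n$, diagonal matrices commute, and $d_1(-1)\cdots d_n(-1)=-I_n$. For clause (5), with $x=t_{ij}(\beta)$ and $j\geq i+2$, Equation~\ref{conj-act:eq} returns $x$ unless $k\in\{i,j\}$, in which case it returns $t_{ij}(-\beta)=x^{-1}$. Clause (6) is the same computation on $\pm C_{i,i+1}$: writing a general element as $x=t_{i,i+1}(\beta)\,t_{1n}(\gamma)\,(\pm I_n)$ (using $Z(UT_n(R))=T_{1n}$), conjugation by $d_k$ flips the $t_{i,i+1}$ factor exactly when $k\in\{i,i+1\}$ and the central $t_{1n}$ factor exactly when $k\in\{1,n\}$; collecting terms, $d_kxd_k$ equals $x$ or $x^{-1}$ up to a factor $\delta\in T_{1n}$, and since $T_{1n}\leq Z(G')$ this $\delta$ is the required element of $Z(G')$, unique because it is pinned down by the defining equation.

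The delicate clauses are (7) and (8), and with them lies the main obstacle: one must isolate the pure transvection part $T_{i,i+1}$ inside $\pm C_{i,i+1}=T_{i,i+1}\cdot T_{1n}\cdot\ZZ$ by a first-order condition that distinguishes it from the central piece $Z(G')\supseteq T_{1n}$. The device is the squaring condition $x^2\in[d_k,\pm C_{i,i+1}]$ for a carefully chosen $k$, exploiting that $[d_k,t_{i,i+1}(\beta)]=t_{i,i+1}(2\beta)$ while $[d_k,t_{1n}(\gamma)]=I_n$ \emph{provided} $d_k$ flips $T_{i,i+1}$ but fixes $T_{1n}$. For $2\leq i\leq n-1$ the choice $k=i$ works, since $i\notin\{1,n\}$; but for $i=1$ the index $1$ also flips $T_{1n}$, which forces the choice $k=2$ instead (here $n\geq 3$ guarantees $2\neq n$), and this explains the apparent asymmetry between clause (7) and clause (8). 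With such a $k$ one has $[d_k,\pm C_{i,i+1}]=\{t_{i,i+1}(2\beta'):\beta'\in R\}$, and for $x=t_{i,i+1}(\beta)\,t_{1n}(\gamma)\,(\pm I_n)$ the factors commute, so $x^2=t_{i,i+1}(2\beta)\,t_{1n}(2\gamma)$. Hence $x^2\in[d_k,\pm C_{i,i+1}]$ if and only if $t_{1n}(2\gamma)=I_n$, i.e. $2\gamma=0$, i.e. $\gamma=0$ because $R$ has characteristic zero; therefore $\pm T_{i,i+1}=T_{i,i+1}\cdot\ZZ$ exactly.

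The splitting $\pm C_{i,i+1}=\pm T_{i,i+1}\cdot Z(G')$ with trivial intersection then reduces to the center computation $Z(G')=T_{1n}$, which one extracts from the explicit description in Lemma~\ref{G':lemma}, together with $T_{i,i+1}\cap T_{1n}=I_n$ and the fact that $-I_n\notin G'\leq UT_n(R)$. The one point needing steady bookkeeping throughout is the factor $\ZZ$: whether one works inside $G_u=UT_n(R)$ (when $G'=G_u$) or inside $G_{u^\pm}$ and its quotient by $\ZZ$ (when $G'\neq G_u$). Clause (1) is phrased in terms of $\bar t\ZZ$, and clauses (6)--(8) in terms of the $\pm$-subgroups of Lemma~\ref{Gupm:lem}, precisely so that both cases are subsumed uniformly; verifying that this $\ZZ$-bookkeeping is harmless in each clause is routine but is the place where errors would most easily creep in.
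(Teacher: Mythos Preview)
Your proposal is correct and follows essentially the same approach as the paper's own proof: verify expressibility of each clause via the earlier definability lemmas (Lemma~\ref{Gupm:lem}, Lemma~\ref{utn-coord:lem}, Theorem~\ref{utn-corrdinatization:thm}, Lemma~\ref{utn:defn:lem}), check (2)--(6) by direct computation with Equation~\ref{conj-act:eq}, and for (7)--(8) compute $[d_k,y]=t_{i,i+1}(2\alpha)$ to identify $\pm T_{i,i+1}$. Your explicit explanation of why $k=2$ is forced in clause~(7) while $k=i$ works in clause~(8) (namely, the need for $d_k$ to flip $T_{i,i+1}$ but fix $T_{1n}$) is a nice clarification that the paper leaves implicit, but the underlying argument is the same.
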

\begin{proof} The formulas in (1) are taken from Theorem~\ref{utn-corrdinatization:thm} and Lemma~\ref{Gupm:lem}. The fact that the $d_i$ satisfy (2)-(6) is clear. All the subgroups $Z(G)$, $Z(G')$, $T_{ij}$, $j-i\geq 2$ and $\pm C_{i,i+1}$ are definable in $G$ as observed above, So indeed (2)-(6) are fist order properties which hold on the $d_i$. Statements (7) and (8) are similar, so let us consider (7) only. Firstly, $[d_2,\pm C_{12}]$ is definable in $G$ relative to the constants $d_2$ and $t_{12}$ by the formula $\exists y\in C_{12}(x=[d_2,y])$. Secondly, an element of $\pm C_{12}$ is of the form $y=\pm I_n t_{12}(\alpha)t_{1n}(\gamma)$, therefore
\begin{equation*}\label{[d2c12]:eqn}\begin{split}
[d_2, y] &=[d_2(-1), \pm I_n t_{12}(\alpha)t_{1n}(\gamma)]\\
&=d_2(-1)t_{12}(-\alpha)t_{1n}(-\gamma)d_2(-1)t_{12}(\alpha)t_{1n}(\gamma)\\
&=t_{12}(\alpha)t_{1n}(-\gamma)t_{12}(\alpha)t_{1n}(\gamma)\\
&=t_{12}(2\alpha)=(t_{12}(\alpha))^2
\end{split}\end{equation*}
which shows that $[d_2,\pm C_{i,i+1}]=\{ t_{12}(\alpha)^2=t_{12}(2\alpha): \alpha \in R\}=T_{12}^2$. So it is clear that if $x^2\in T_{12}^2$ for $x\in \pm C_{i,i+1}$, then $x\in \pm T_{12}$, for $\pm T_{12}$ defined as above. The two facts
$ \pm C_{i,i+1} = \pm T_{i,i+1} \cdot Z(G')$ , and  $\pm T_{i,i+1} \cap Z(G')=I_n$, which clearly hold in $G$ are first-order properties since all subgroups involved are definable in $G$.\end{proof}

\begin{proposition}\label{TijDefine:prop} Assume $G$ and $R$ are as in Lemma~\ref{(d,t):lem} and $H\equiv G$ as groups, where (to consider the most general case) $G'$ is a proper subgroup of $G_u$. If $(\bar{e},\bar{s})$ are elements of $H$ where $H\models\Phi_d(\bar{e},\bar{s})$ then there exists a ring $S\equiv R$ and formulas $\Phi_{ij}(x,\bar{e}, \bar{s})$ such that: 
\begin{enumerate}
\item If $1< i+1 <j\leq n$ each $\Phi_{ij}(H,\bar{e},\bar{s})$ is a one-parameter subgroup of $H$ generated by $s_{ij}$ in $H$ over $S$.
\item If $j=i+1$ then $\Phi_{ij}(H,\bar{e},\bar{s})$ is a subgroup $\pm S_{i,i+1}$ in $H$, which is an abelian extension of $\ZZ$ by the one-parameter subgroup generated by the coset $s_{i,i+1}\ZZ$ over $S$. Indeed $\pm S_{i,i+1}\cong S_{i,i+1}\times \ZZ$. 
\item The formula $\Phi_{G_{u^\pm}}(x)$ defines in $H$ a subgroup $H_{u^\pm}$,
  where $H_{u^\pm}\cong UT_n(S)\times \ZZ$.  
\end{enumerate}
\end{proposition}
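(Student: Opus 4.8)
The plan is to transfer, via elementary equivalence, the coordinatization already available inside $G_{u^\pm}$ and then to handle the two ``special'' columns $j=i+1$ where the splitting is not first-order. First I would observe that the hypothesis $H \models \Phi_d(\bar e, \bar s)$ forces, by clause (1) of Lemma~\ref{(d,t):lem} together with Theorem~\ref{utn-corrdinatization:thm}, the existence of a ring $S \equiv R$ and an isomorphism of the subgroup $H_{u^\pm} = \Phi_{G_{u^\pm}}(H)$ with $UT_n(S) \times \ZZ$; this is exactly the content that Belegradek's coordinatization formula $\Phi_{UT_n}$ carries across elementary equivalence, applied here to the cosets $\bar t\ZZ$ rather than to $\bar t$ itself. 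This immediately gives item (3). For item (1), I would invoke Lemma~\ref{utn-coord:lem}: for $j - i \geq 2$ the one-parameter subgroup $T_{ij}$ is definable inside $N = UT_n(R)$ (equivalently inside $H_{u^\pm}$) relative to the constant $t_{ij}$, and the ring $S$ together with its action on each such subgroup is interpretable there. So the formula $\Phi_{ij}$ is the one defining $T_{ij}$ inside the definable subgroup $H_{u^\pm}$, with the interpreted $S$-module structure, and $\Phi_{ij}(H,\bar e,\bar s)$ is the one-parameter subgroup generated by $s_{ij}$ over $S$.

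The substance is item (2), the superdiagonal case $j = i+1$, and I expect this to be the main obstacle. The difficulty is precisely the point flagged after Lemma~\ref{utn-coord:lem}: inside $H_{u^\pm}$ the subgroup $C_{i,i+1} = T_{i,i+1}\cdot Z(N)$ is definable, but \emph{a priori} it need only be a quasiunitriangular extension, so one cannot simply split off $S_{i,i+1}$ by a first-order formula. The key is to use the extra diagonal data: clauses (7) and (8) of Lemma~\ref{(d,t):lem} exhibit $\pm T_{i,i+1}$ as $\{x \in \pm C_{i,i+1} : x^2 \in [d_i, \pm C_{i,i+1}]\}$ (with $d_2$ in place of $d_i$ when $i=1$), and assert the decomposition $\pm C_{i,i+1} = \pm T_{i,i+1}\cdot Z(G')$ with trivial intersection. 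Since these are first-order statements that hold in $G$, they hold in $H$ as well. Thus I would define $\Phi_{ij}$ for $j=i+1$ by the formula cutting out $\pm S_{i,i+1} := \{x \in \pm C_{i,i+1} : x^2 \in [e_i, \pm C_{i,i+1}]\}$ in $H$, using the interpreted constants $\bar e, \bar s$; the transferred clauses (7)–(8) guarantee $\pm C_{i,i+1} = \pm S_{i,i+1}\cdot Z(H')$ with $\pm S_{i,i+1}\cap Z(H') = I_n$.

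It remains to identify the structure of $\pm S_{i,i+1}$ and to verify $\pm S_{i,i+1} \cong S_{i,i+1}\times \ZZ$. Here I would argue that the computation in the proof of Lemma~\ref{(d,t):lem}, showing $[d_2,\pm C_{12}] = T_{12}^2$, is a first-order consequence of $\Phi_d$ and so its analogue holds in $H$: the element $\delta(x,e_k)\in Z(H')$ produced by clause (6) measures the coset $x\ZZ$ against the action of the $e_k$, and the squaring condition picks out exactly those $x$ whose square lands in the image of the interpreted additive doubling map. Because $R$, and hence $S$, has characteristic zero, multiplication by $2$ is injective on $S^+$, so this squaring/doubling condition isolates a copy of the one-parameter subgroup $S_{i,i+1}$ over $S$ up to the central $\ZZ$; the trivial-intersection clause then yields the internal direct product $\pm S_{i,i+1}\cong S_{i,i+1}\times \ZZ$ and shows that $\pm S_{i,i+1}$ is the abelian extension of $\ZZ$ by the one-parameter subgroup generated by $s_{i,i+1}\ZZ$ claimed in (2). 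The only delicate check is that the cocycle describing $\pm S_{i,i+1}$ as an extension of $\ZZ$ genuinely splits rather than merely being quasiunitriangular, and this is forced by the $\pm T_{i,i+1}\cap Z(G')=I_n$ clause transferred from $G$, which rules out the non-split possibility.
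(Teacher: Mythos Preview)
Your overall strategy---transfer Belegradek's coordinatization, then use clauses (7)--(8) of Lemma~\ref{(d,t):lem} to pin down the superdiagonal pieces---matches the paper's. But there are two related gaps.

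First, you claim item~(3) at the outset, attributing it to clause~(1) of $\Phi_d$ and Theorem~\ref{utn-corrdinatization:thm}. That theorem only yields $H_{u^\pm}/\ZZ\cong UT_n(S,\bar g)$ for some symmetric $2$-cocycles $g_i\in S^2(S^+,S^+)$; it does \emph{not} give $UT_n(S)$. The paper proceeds in the opposite order: it establishes (1) and (2) first, and only then deduces (3) from them together with Theorem~\ref{utn-corrdinatization:thm}. The splitting off of the superdiagonal one-parameter groups in (2) is exactly what kills the quasiunitriangular deformation, so (3) cannot come first.

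Second, and more seriously, your justification for the direct product $\pm S_{i,i+1}\cong S_{i,i+1}\times\ZZ$ is incorrect. You invoke the transferred clause $\pm T_{i,i+1}\cap Z(G')=I_n$, but that clause concerns the intersection with $Z(G')\cong R^+$, not with $\ZZ$. What that clause buys you is that $\pm S_{i,i+1}$ is a complement to $Z(H')$ inside $\pm C_{i,i+1}$, hence $\pm S_{i,i+1}\cong \pm C_{i,i+1}/Z(H')$; this identifies $\pm S_{i,i+1}$ as an abelian extension of $\ZZ$ by $S^+$, but says nothing about whether \emph{that} extension splits. No first-order clause in $\Phi_d$ forces this---the splitting is an algebraic fact: since $S$ has characteristic zero, $S^+$ is torsion-free, and Lemma~\ref{tbytf:lem} gives $Ext(S^+,\Z_2)=0$. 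This is precisely how the paper closes the argument, and it is the step your proposal is missing.
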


\begin{proof} Statement (1) already follows from Belegradek's work, say Theorem~\ref{utn-corrdinatization:thm} and uniform definability of $G'$.  
	
For (2), the first-oder definability $\pm T_{i,i+1}=\{\pm I_nt_{i,i+1}(\beta):\beta \in R\}$  was proved in Lemma~\ref{(d,t):lem}. Moreover, by Theorem~\ref{utn-corrdinatization:thm} the formulas that define the $\pm C_{i,i+1}/\ZZ$ in $G_{u^\pm}/\ZZ$ define the subgroups $\pm D_{i,i+1}/{\ZZ}$ in $H_{u^\pm}/\ZZ$ such that 
$\pm D_{i,i+1}/Z(H_{u^\pm}) \cong S^+$, and $Z(H_{u^\pm})/\ZZ\cong S^+$.
This, together with part (7) of Lemma~\ref{(d,t):lem}, and definability of $\ZZ$ imply the existence of the formula above defining $\pm{S_{i,i+1}}$ as claimed. The fact that $\pm S_{i,i+1}= \ZZ \times S_{i,i+1}$ is implied by the fact that $Ext(S^+, \Z_2)=0$, since $S$ is a characteristic zero integral domain (See Lemma~\ref{tbytf:lem} stated and proven below).

Now (1), (2) and Theorem~\ref{utn-corrdinatization:thm} clearly imply (3).

     \end{proof} 
 \begin{lemma}
 \label{tbytf:lem}
 Assume $B$ is a torsion-free abelian group and $T$ is a finite abelian group, both written additively. Then $Ext(B,T)=0$.\end{lemma}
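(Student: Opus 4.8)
The plan is to identify the functor $Ext(B,-)$ of abelian extensions used throughout the paper with the ordinary derived functor $Ext^1_{\mathbb{Z}}(B,-)$ of homological algebra; this identification is precisely the correspondence between abelian extensions of $A$ by $B$ and symmetric $2$-cocycles modulo coboundaries recalled in Section~\ref{prelim:sec} (the condition that $E$ be abelian matching symmetry of the cocycle). Once this is in place the statement becomes the assertion $Ext^1_{\mathbb{Z}}(B,T)=0$, and I would first reduce it to the cyclic case. Since $T$ is a finite abelian group it decomposes as a finite direct sum $T\cong\bigoplus_{i}\mathbb{Z}/n_i\mathbb{Z}$, and because $Ext^1_{\mathbb{Z}}(B,-)$ commutes with finite direct sums in the second argument, it suffices to prove $Ext^1_{\mathbb{Z}}(B,\mathbb{Z}/n\mathbb{Z})=0$ for every $n\geq 1$.

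The heart of the argument is to show that $Ext^1_{\mathbb{Z}}(B,\mathbb{Z})$ is a \emph{divisible} group, and this is the only place where torsion-freeness of $B$ is used. Because $B$ is torsion-free, multiplication by $m$ on $B$ is injective, so for each $m\geq 1$ there is a short exact sequence $0\to B\xrightarrow{\,m\,}B\to B/mB\to 0$. Applying $\operatorname{Hom}_{\mathbb{Z}}(-,\mathbb{Z})$ and passing to the long exact sequence, the relevant tail reads $Ext^1_{\mathbb{Z}}(B,\mathbb{Z})\xrightarrow{\,m\,}Ext^1_{\mathbb{Z}}(B,\mathbb{Z})\to Ext^2_{\mathbb{Z}}(B/mB,\mathbb{Z})$, where the first arrow is multiplication by $m$ by functoriality and biadditivity of $Ext$. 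Since $\mathbb{Z}$ has global dimension one (every subgroup of a free abelian group is free, so every abelian group admits a projective resolution of length at most one), the term $Ext^2_{\mathbb{Z}}(B/mB,\mathbb{Z})$ vanishes. Hence multiplication by $m$ is surjective on $Ext^1_{\mathbb{Z}}(B,\mathbb{Z})$ for every $m$, which is exactly divisibility.

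To finish, I would feed this into the Bockstein-type sequence coming from $0\to\mathbb{Z}\xrightarrow{\,n\,}\mathbb{Z}\to\mathbb{Z}/n\mathbb{Z}\to 0$. Applying $\operatorname{Hom}_{\mathbb{Z}}(B,-)$ and using $Ext^2_{\mathbb{Z}}(B,\mathbb{Z})=0$ once more gives $Ext^1_{\mathbb{Z}}(B,\mathbb{Z}/n\mathbb{Z})\cong Ext^1_{\mathbb{Z}}(B,\mathbb{Z})/\,n\,Ext^1_{\mathbb{Z}}(B,\mathbb{Z})$, and divisibility forces the right-hand side to be zero; this completes the cyclic case and hence the proof. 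The main obstacle, and genuinely the only nontrivial point, is the divisibility of $Ext^1_{\mathbb{Z}}(B,\mathbb{Z})$ in the middle step; everything else is bookkeeping with long exact sequences. A more self-contained route avoiding $Ext^2$ would work directly with the symmetric cocycle $f\colon B\times B\to T$ defining the extension and solve the resulting coboundary equation, but controlling this over an arbitrary (possibly infinite-rank) torsion-free $B$ is messier than the homological computation, so I would prefer the route above.
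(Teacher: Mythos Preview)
Your argument is correct. Both you and the paper reduce to the cyclic case $T=C_m$ via the primary decomposition of $T$ and additivity of $Ext$ in the second variable, so the overall architecture is the same. The difference is in how the cyclic case is dispatched: the paper simply invokes a result from Fuchs (\cite{fox}, 52.F) giving $Ext(B,C_m)\cong Ext(B[m],C_m)$, where $B[m]=\{b\in B:mb=0\}$, and then observes $B[m]=0$ since $B$ is torsion-free; you instead establish divisibility of $Ext^1_{\Z}(B,\Z)$ via the long exact sequence for $0\to B\xrightarrow{m}B\to B/mB\to 0$ and then feed this into the Bockstein sequence for $0\to\Z\xrightarrow{n}\Z\to\Z/n\Z\to 0$.

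Your route is more self-contained (no black-box citation) but takes a detour through $Ext(B,\Z)$. A shorter variant in the same spirit: apply $\operatorname{Hom}(-,C_m)$ directly to $0\to B\xrightarrow{m}B\to B/mB\to 0$. The tail gives that multiplication by $m$ on $Ext^1(B,C_m)$ is surjective (again using $Ext^2=0$), while biadditivity and $mC_m=0$ force the same map to be zero; hence $Ext^1(B,C_m)=0$ in one step. This is morally the content of the Fuchs identity the paper cites, proved on the spot.
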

 \begin{proof} Assume $C_m$ is the cyclic group of order $m$. Then by~(\cite{fox}, 52.F) we have that,  
  $Ext(B,C_m)\cong Ext(B[m],C_m)$ where $B[m]$ denotes the subgroup of $B$ consisting of all $b\in B$ such that $mb=0$. But since $B$ is torsion-free $B[m]=0$, which implies that $Ext(B,C[m])=0$. 
 Now assume $T\cong C_{m_1}\oplus \cdots \oplus C_{m_k}$ is the primary decomposition of $T$ into finite cyclic groups $C_{m_i}$. Then, 
 \[Ext(B,T)\cong \bigoplus_{i=1}^k Ext(B,C_{m_i})=0.\] \end{proof} 
 The following lemma follows from standard linear algebra arguments. 
 \begin{lemma}\label{Dn:lem}Let $R$ be a characteristic zero integral domain. Then the subgroup $D_n(R)$ of all diagonal matrices of $T_n(R)$ is definable in it by the first formula 
 \[\Phi_{D}(x,\bar{d})\define(\bigwedge_{i=1}^n[x,d_i]=1),\]
with constants $d_i=d_i(-1)$, $1\leq i \leq n$, i.e. $D_n(R)$ is the centralizer in $G$ of all the $d_i$. \end{lemma}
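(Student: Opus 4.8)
The plan is to prove the two inclusions separately, the forward one being trivial and the reverse one being a single matrix computation. For the inclusion $D_n(R) \subseteq \Phi_D(G,\bar{d})$, I would simply observe that $D_n(R)$ is abelian and that each constant $d_i = d_i(-1)$ lies in $D_n(R)$; hence every diagonal matrix commutes with all the $d_i$, so $D_n(R)$ is contained in the asserted centralizer.

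For the reverse inclusion I would take an arbitrary $x \in G = T_n(R)$ with $[x,d_i] = I_n$ for every $i$ and use the unique factorization $x = du$ with $d \in D_n(R)$ and $u \in UT_n(R)$ afforded by the semidirect decomposition $G = D_n(R) \ltimes UT_n(R)$. Since $D_n(R)$ is abelian, $d$ commutes with every $d_k$, and therefore $x$ commutes with $d_k$ if and only if $u$ does. Thus the whole statement reduces to showing that the only $u \in UT_n(R)$ centralizing all of $d_1,\ldots,d_n$ is $u = I_n$.

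The key step is to make the conjugation action of $d_k$ on $u$ explicit. Writing $d_k = \operatorname{diag}[\epsilon_1,\ldots,\epsilon_n]$ with $\epsilon_i = -1$ when $i=k$ and $\epsilon_i = 1$ otherwise, a direct matrix multiplication (or, equivalently, Equation~\eqref{conj-act:eq} applied to the transvections composing $u$) yields $(d_k u d_k)_{ij} = \epsilon_i \epsilon_j\, u_{ij}$ for $i<j$, and $\epsilon_i \epsilon_j = -1$ precisely when $k \in \{i,j\}$ (recall $i\neq j$). Hence $d_k u d_k = u$ forces $u_{ij} = -u_{ij}$, that is $2u_{ij} = 0$, for every superdiagonal pair $i<j$ with $k\in\{i,j\}$.

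The only place the hypotheses on $R$ enter — and the crux of the argument, such as it is — sits here: because $R$ is an integral domain of characteristic zero, $2$ is a nonzero non-zero-divisor, so $2u_{ij}=0$ gives $u_{ij}=0$. Letting $k$ range over $1,\ldots,n$ covers every pair $i<j$ (take $k=i$), so $u = I_n$ and $x = d \in D_n(R)$, completing the reverse inclusion. I would flag that characteristic zero is genuinely indispensable rather than cosmetic: in characteristic $2$ each $d_i(-1)=I_n$, so the proposed formula would define all of $G$; the integral domain hypothesis is what lets one cancel the factor $2$. Finally, since $D_n(R)$ is realized as the centralizer of the finitely many fixed elements $d_1,\ldots,d_n$, the displayed formula $\Phi_D(x,\bar{d})$ defines it, and the same formula defines the corresponding centralizer uniformly in any elementarily equivalent copy of $G$.
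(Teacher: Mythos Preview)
Your argument is correct and is exactly the ``standard linear algebra argument'' the paper invokes without spelling out; in fact the paper gives no proof beyond that phrase, so your write-up supplies the details the authors omit. The reduction to $u\in UT_n(R)$ via the semidirect decomposition and the entrywise conjugation identity $(d_k u d_k)_{ij}=\epsilon_i\epsilon_j u_{ij}$, together with the observation that characteristic zero is needed to cancel the factor $2$, is precisely what the paper has in mind.
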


 
\begin{corollary}\label{char:firsttry:cor}
Assume $H\equiv T_n(R)$, where $R$ is a characteristic zero integral domain. Then \[H\cong E_n \ltimes UT_n(S),\] where $E_n$ is an abelian subgroup of $H$, defined by the same formula, relative to some constants $e_1, \ldots, e_n$, that defines $D_n$ in $G$, and $S\equiv R$.
  \end{corollary}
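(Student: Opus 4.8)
The plan is to exhibit the decomposition by first pinning down a good tuple of ``diagonal'' elements in $H$ through the transfer of a single existential sentence, and then reading off the semidirect product from the definable pieces supplied by Proposition~\ref{TijDefine:prop}. In $G=T_n(R)$ the tuple $(\bar d,\bar t)$ (with $d_k=d_k(-1)$ and $\bar t$ the transvections) satisfies $\Phi_d(\bar d,\bar t)$ by Lemma~\ref{(d,t):lem}. I would work with the augmented formula $\Phi_d^+:=\Phi_d\wedge(\mathrm{i})\wedge(\mathrm{ii})\wedge(\mathrm{iii})$, where, writing $C(\bar x)=\Phi_D(\cdot,\bar x)$ for the centralizer of Lemma~\ref{Dn:lem} and $z$ for the unique involution of the center, the $L$-formulas assert that $(\mathrm{i})$ $C(\bar x)$ is abelian, $(\mathrm{ii})$ $C(\bar x)\cap G_{u^\pm}=\langle z\rangle$, and $(\mathrm{iii})$ $C(\bar x)\cdot G_{u^\pm}=G$. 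All three are expressible because $G_{u^\pm}$ and $\Phi_D(\cdot,\bar x)$ are definable, and all three hold in $G$ for $\bar x=\bar d$: indeed $C_G(\bar d)=D_n$ is abelian, $D_n\cap G_{u^\pm}=\ZZ$, and $D_n\cdot G_{u^\pm}=G$ since $-I_n\in D_n$. Hence $G\models\exists\bar x\,\bar y\,\Phi_d^+(\bar x,\bar y)$, and because $H\equiv G$ there are witnesses $(\bar e,\bar s)$ in $H$ with $H\models\Phi_d^+(\bar e,\bar s)$.

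Next I would feed these witnesses into Proposition~\ref{TijDefine:prop}, obtaining a ring $S\equiv R$, the one-parameter subgroups $S_{ij}$ for $j-i\geq 2$, the subgroups $S_{i,i+1}$ isolated inside $\pm S_{i,i+1}\cong S_{i,i+1}\times\ZZ$, and the definable subgroup $H_{u^\pm}=\Phi_{G_{u^\pm}}(H)\cong UT_n(S)\times\ZZ$. I then set $E_n:=\Phi_D(H,\bar e)$, which is abelian by $(\mathrm{i})$, and $H_u:=\langle S_{ij}\ (j-i\geq2),\ S_{i,i+1}\ (1\leq i\leq n-1)\rangle$. Since these generators satisfy the Steinberg relations and generate $H_{u^\pm}$ modulo $\ZZ$ while omitting $-I_n$ (as $UT_n(S)$ is torsion-free over the characteristic-zero domain $S$), I get $H_{u^\pm}=H_u\times\ZZ$ and $H_u\cong UT_n(S)$.

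It then remains to verify $H=E_n\ltimes H_u$. For trivial intersection, $(\mathrm{ii})$ gives $E_n\cap H_{u^\pm}=\ZZ$, so $E_n\cap H_u\subseteq\ZZ\cap H_u=1$. For generation, $-I_n\in Z(H)\subseteq E_n$, so $(\mathrm{iii})$ yields $H=E_n\cdot H_{u^\pm}=E_n\cdot H_u\cdot\ZZ=E_n\cdot H_u$. For normality, $H_u$ is a direct factor of $H_{u^\pm}$, hence normal in it, and $E_n$ normalizes $H_u$ because the conjugation identities of Lemma~\ref{(d,t):lem}(5)--(6) carry each generator of $H_u$ back into $H_u$, the correction factors $\delta$ lying in $Z(H')\subseteq H_u$; since $H=E_n\cdot H_{u^\pm}$, both factors normalize $H_u$, so $H_u\trianglelefteq H$. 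This gives $H\cong E_n\ltimes UT_n(S)$ with $E_n$ defined by the formula of Lemma~\ref{Dn:lem}. When $G'=G_u$ the formula $\Phi_{G_{u^\pm}}$ already defines $UT_n(S)$ outright and $\ZZ$ never intervenes, so the argument only simplifies.

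The main obstacle is the two-torsion ambiguity introduced by $-I_n$: Proposition~\ref{TijDefine:prop} only returns $UT_n(S)\times\ZZ$, and $E_n$ and $H_{u^\pm}$ meet exactly in $\ZZ$, so neither the trivial-intersection nor the generation half of an honest semidirect complement is automatic. The resolution is to single out $H_u$ as the torsion-free direct factor generated by the recovered transvection subgroups and to absorb $\ZZ$ into $E_n$ via $-I_n\in Z(H)$; checking that $H_u$ is genuinely normal in $H$ (not merely normal modulo $\ZZ$) is where the conjugation identities (5)--(6) of Lemma~\ref{(d,t):lem}, together with the location of their $\delta$-corrections in $Z(H')\subseteq H_u$, do the decisive work.
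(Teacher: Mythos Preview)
Your argument has a genuine gap in the normality step. You claim that ``$E_n$ normalizes $H_u$ because the conjugation identities of Lemma~\ref{(d,t):lem}(5)--(6) carry each generator of $H_u$ back into $H_u$.'' But clauses~(5)--(6) of Lemma~\ref{(d,t):lem} describe only the conjugation action of the $n$ specific involutions $d_k$ (transferred to $e_k$). The subgroup $E_n=\Phi_D(H,\bar e)$ is the \emph{centralizer} of $\{e_1,\dots,e_n\}$, which in $G$ is all of $D_n(R)\cong(\RM)^n$, not the subgroup generated by the $e_k$. So from~(5)--(6) you can conclude at most that $\langle e_1,\dots,e_n\rangle$ normalizes $H_u$, which says nothing about a generic element of $E_n$. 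Nor can you recover normality by arguing that $H_u$ is characteristic in $H_{u^\pm}$: complements of $\ZZ$ in $UT_n(S)\times\ZZ$ are parametrized by homomorphisms $UT_n(S)\to\ZZ$, equivalently by homomorphisms $(S^+)^{n-1}\to\Z/2\Z$, and these are typically nontrivial (e.g.\ already for $S\equiv\Z$).

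The paper closes exactly this gap by transferring, for each $i$, an additional $L$-sentence
\[
\Phi_{iN}\;:\;\forall x\in\pm T_{i,i+1}\ \forall y\in G\ \exists z\in G'\ \bigl(x^y=xz\ \vee\ x^y=x^{-1}z\bigr),
\]
which quantifies over \emph{all} $y\in G$, not just the $d_k$. In $H$ this yields, for $x\in S_{i,i+1}$, that $x^y\in\{x,x^{-1}\}\cdot H'\subseteq H_u$ for every $y\in H$; together with $S_{ij}\subseteq H'$ for $j-i\geq2$, one gets that every generator of $H_u$ has all its $H$-conjugates in $H_u$, hence $H_u\trianglelefteq H$. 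Your augmentation $\Phi_d^+$ with the clauses (i)--(iii) is fine for the intersection and generation parts of the semidirect product, but you still need a device of this kind---a first-order statement about conjugation by \emph{arbitrary} group elements---to secure normality of the non-definable subgroup $H_u$.
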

\begin{proof}We observed that $G'$ and $G_{u^\pm}=UT_n(R)\cdot \ZZ$ are definable in $G$ uniformly with respect to $Th(G)$ and constants $(\bar{t},\bar{d})$ satisfying $\Phi_d(\bar{t},\bar{d})$. By Proposition~\ref{TijDefine:prop} the same formula that defines $G_{u^{\pm}}$ defines in $H$, $H_{u^{\pm}}=H_u\cdot\ZZ$, where $H_u\cong UT_n(S)$. The subgroup $H_u$ may not be a definable  subgroup, so we can not immediately conclude that $H_u$ is a normal subgroup of $H$ even though $H_{u^{\pm}}$ is so. Consider the following sentence for each $i$:
\[\Phi_{iN}\define \forall x\in \pm T_{i,i+1},\forall y\in G, \exists z\in G'(x^y= xz  \vee x^y=x^{-1}z).\] 
All subgroups involved are definable and so the statement is an $L$-sentence. Let us show that it is true in $G$. Without loss of generality we can assume $x= t_{i,i+1}(\beta)$ and $y= d_{i}(\alpha)d_{i+1}(\gamma)$. Then
$x^y=t_{i,i+1}(\alpha^{-1}\gamma\beta)$. So either $\alpha=\gamma=1$ and $x^y=x$ or $\alpha^{-1}\gamma\neq 1$ and  
$x^yx^{-1}=t_{i,i+1}((\alpha^{-1}\gamma-1)\beta)\in G'$. Therefore the fact that $T_{i,i+1}$ is normalized modulo $G'$ is a first-order property. By uniformity of definitions and the fact that $H\models \Phi_{iN}$ for each $i$ we may conclude that for each $i$, $S_{i,i+1}$ is normalized in $H$ modulo $H'$. Therefore $H_u\geq H'$ is normal in $H$ modulo $H'$. But this obviously implies that $H_u\cong UT_n(S)$ is normal in $H$.   

 Now by Proposition~\ref{TijDefine:prop} and Lemma~\ref{Dn:lem} we can express the facts that $G=D_n\cdot G_{u^\pm}$, $D_n\cap G_{u^\pm}=\ZZ$ and  $G_{u^\pm}\unlhd G$ using $L$-formulas uniformly with respect to $Th(G)$ and constants $\bar{d}$ and $\bar{t}$ satisfying $\Phi_d(\bar{d},\bar{t})$. Considering that  $H_{u^\pm}=H_u \times \ZZ$, $H_u\cong UT_n(S)$, $H_u\unlhd H$, and the fact that $D_n$ is abelian, there exists an abelian subgroup $E_n$ of $H$ such that $H\cong E_n\ltimes UT_n(S)$ as claimed.\end{proof} 
In the next statement we clarify the structure of the subgroup $E_n$ introduced above.

\begin{lemma}\label{torus:lem} Let $G=T_n(R)$, where $R$ is characteristic zero integral domain and let $H\equiv G$ as groups. Assume 
\[H\models (\bigwedge_{1\leq i<j\leq n}\Phi_{G_{u^\pm}}(s_{ij}))\wedge \Phi_{UT_n}(\bar{s})\wedge \bigwedge_{i=1}^n\Phi_d(\bar{e},\bar{s})\] and $E_n$ is an abelian subgroup of $H$ defined by $\Phi_D(x,\bar{e})$. Then:

\begin{enumerate}
\item[(a)] For each $1\leq i \leq n$ the subgroup  $\Delta_i(R)\define d_i(\RM) \cdot Z(G)$ is first-order definable in $D_n=D_n(R)$ by a formula $\Phi_{\Delta_i}(x,\bar{d},\bar{t})$. 
Moreover there exists a ring $S\equiv R$ and for each $i=1,\ldots, n$ a subgroup $\Lambda_i<E_n$ of $H$ such that
\[H \models \Phi_{\Delta_i}(x,\bar{e},\bar{s}) \Leftrightarrow \left((x \in \Lambda_i) \wedge (Z(H)<\Lambda_i) \wedge  (\Lambda_i/Z(H)\cong \SM)\right)\]
\item[(b)] $D_n=\Delta_1\cdots \Delta_n$. Therefore $E_n=\Lambda_1\cdots \Lambda_n$. 
\item[(c)] $\ds Z(G)=\bigcap_{i=1}^n \Delta_i$ and $Z(G)$ is definably isomorphic to $\RM$. Similarly one has $\ds Z(H)=\bigcap_{i=1}^n \Lambda_i$ and $Z(H)\cong \SM$.
\item[(d)] $E_n$ is isomorphic to an abelian extension of $Z(H)\cong \SM$ by $E_n/Z(H)\cong (\SM)^{n-1}$.
\end{enumerate}\end{lemma}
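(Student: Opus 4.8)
The plan is to give, for each $i$, a formula $\Phi_{\Delta_i}$ expressing that $x$ is diagonal and acts trivially on every transvection block avoiding the index $i$, to check that it cuts out $\Delta_i$ in $G$, and then to transport (a), (b) and the ``quotient'' halves of (c), (d) to $H$ by elementary equivalence, using the ring $S\equiv R$ and the coordinatisation already supplied by Proposition~\ref{TijDefine:prop} and Corollary~\ref{char:firsttry:cor}. Throughout I use that for $x=\mathrm{diag}[\alpha_1,\dots,\alpha_n]\in D_n$ one has $x^{-1}t_{jk}(\beta)x=t_{jk}(\alpha_j^{-1}\alpha_k\beta)$, so that $x$ centralises $T_{jk}$ (resp.\ $\pm T_{j,j+1}$, since diagonal conjugation fixes the central factor $\ZZ$) precisely when $\alpha_j=\alpha_k$.

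For (a) I set
\[
\Phi_{\Delta_i}(x,\bar d,\bar t)\define\Phi_D(x,\bar d)\wedge\bigwedge_{\substack{1\le j<k\le n\\ i\notin\{j,k\}}}\forall y\,(y\in\Pi_{jk}\rightarrow xy=yx),
\]
where $\Phi_D$ is from Lemma~\ref{Dn:lem} and $\Pi_{jk}$ is the subgroup playing the role of $T_{jk}$ when $k-j\ge2$ and of $\pm T_{j,j+1}$ when $k=j+1$, both definable relative to $\bar t$ by Lemmas~\ref{utn-coord:lem} and~\ref{(d,t):lem}. Since the complete graph on $\{1,\dots,n\}\setminus\{i\}$ is connected (here $n\ge3$ enters), the displayed conjunction forces $\alpha_j=\alpha_k$ for all $j,k\ne i$, so $\Phi_{\Delta_i}(G,\bar d,\bar t)$ is exactly the set of diagonal matrices constant off the $i$-th entry, i.e.\ $\Delta_i=d_i(\RM)\cdot Z(G)$. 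As $d_i(\RM)\cap Z(G)=I_n$, the map recording the unit by which $x\in\Delta_i$ scales a fixed line $T_{ik}$ (or $T_{ji}$) is a definable surjection $\Delta_i\to\RM$ with kernel $Z(G)$, giving a definable isomorphism $\Delta_i/Z(G)\cong\RM$. This isomorphism is witnessed by a genuinely nontrivial action, hence transfers: writing $\Lambda_i=\Phi_{\Delta_i}(H,\bar e,\bar s)$, one gets $\Lambda_i\le E_n$, $Z(H)<\Lambda_i$ and $\Lambda_i/Z(H)\cong\SM$, which is (a). Part (b) is then immediate, since $\mathrm{diag}[\alpha_1,\dots,\alpha_n]=d_1(\alpha_1)\cdots d_n(\alpha_n)$ with $d_j(\alpha_j)\in\Delta_j$ gives $D_n=\Delta_1\cdots\Delta_n$, an $L$-sentence which transfers to $E_n=\Lambda_1\cdots\Lambda_n$.

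For (c) the identity $\bigcap_i\Delta_i=Z(G)$ (indeed $\Delta_i\cap\Delta_j=Z(G)$ for $i\ne j$, using $n\ge3$) is first order and transfers to $\bigcap_i\Lambda_i=Z(H)$; and for (d) the $n-1$ scaling homomorphisms $\nu_k(x)=$ ``unit by which $x$ acts on $T_{k,k+1}$'' assemble into a definable surjection $\Theta\colon E_n\to(\SM)^{n-1}$ with kernel $Z(H)$, realising $E_n/Z(H)\cong(\SM)^{n-1}$ as a definable, hence transferred, isomorphism. The assertion that $Z(G)\cong\RM$ in $G$ is trivial, as $Z(G)=\{\gamma I_n:\gamma\in\RM\}$ is literally the scalar subgroup.

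The genuine difficulty is the remaining assertion $Z(H)\cong\SM$, and I expect this to be the main obstacle. Elementary equivalence alone is insufficient here: the scalar subgroup acts trivially on everything, so no definable map of $G$ over the available parameters $\bar d,\bar t$ can identify $Z(G)$ with the interpreted $\RM$ — an automorphism of $G$ fixing $UT_n(R)$ pointwise can fix $\bar d,\bar t$ yet move $Z(G)$ — and transfer yields only $Z(H)\equiv\SM$; one can in fact exhibit abstract abelian groups $E_n$ meeting all the transferred constraints of (a)--(d) whose centre is not isomorphic to $\SM$. I would therefore argue directly from the concrete isomorphism $H\cong E_n\ltimes UT_n(S)$ of Corollary~\ref{char:firsttry:cor}: choosing set-theoretic lifts $\ell_i\colon\SM\to\Lambda_i$ of the isomorphisms from (a), the product $D(\alpha)=\ell_1(\alpha)\cdots\ell_n(\alpha)$ lands in $Z(H)=\ker\psi$ because the scaling contributions of $\ell_j$ and $\ell_k$ to each line $T_{jk}$ cancel, and the heart of the matter is to show that, after normalising the lifts, $D$ is an isomorphism $\SM\xrightarrow{\sim}Z(H)$ — equivalently that the product of the $\Lambda_i$-extension cocycles is a coboundary. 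This is precisely the step where the concrete reconstruction of $E_n$ over $S$, rather than the transferred first-order data, must be invoked.
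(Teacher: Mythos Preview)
Your formulation of $\Phi_{\Delta_i}$ via centralisation of the blocks $\Pi_{jk}$ with $i\notin\{j,k\}$ is correct and only a minor variant of the paper's approach, which instead records the full conjugation action $x^{-1}t_{ij}(\beta)x=t_{ij}(\alpha^{-1}\beta)$ or $t_{ij}(\beta)$ and reads off $\Delta_i/Z(G)\cong\RM$ from the scaling parameter~$\alpha$; your connectedness argument (using $n\ge3$) is a clean substitute. Parts~(b), the intersection statement in~(c), and the quotient in~(d) transfer exactly as you argue, and here you and the paper agree.

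On $Z(H)\cong\SM$ your diagnosis is in fact sharper than the paper's own argument. The paper handles this in~(c) by writing a formula asserting that each $x\in Z(G)$ decomposes as $x=x_1\cdots x_n$ with $x_k\in\Delta_k$ all acting with a \emph{unique} common parameter~$\alpha$, intending $x\leftrightarrow\alpha$ as the definable bijection $Z(G)\to\RM$. But writing $x_k=d_k(\alpha)z_k$ with $z_k\in Z(G)$ arbitrary shows that \emph{every} $\alpha\in\RM$ admits such a decomposition, so the claimed uniqueness fails---and your central-automorphism argument (twisting $D_n$ by a homomorphism $\chi\colon D_n\to Z(G)$ trivial on the order-two elements $d_i$ but not on $Z(G)$) explains exactly why no formula over $(\bar d,\bar t)$ can realise this bijection. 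So the paper's route does not establish the definable isomorphism it asserts, and your objection is well taken. That said, your proposal halts at the same point: the lift-and-multiply map $D(\alpha)=\ell_1(\alpha)\cdots\ell_n(\alpha)$ does land in $Z(H)$, but you do not show it is bijective, and the cocycle condition you isolate is left open. You have correctly located a genuine gap that the paper's proof glosses over, but neither argument closes it.
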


\begin{proof}  Pick a $1\leq k \leq n$, say $k=1$. Then
\begin{equation} \label{Delta1-defn:eqn} x\in \Delta_1 \Leftrightarrow x\in D_n \wedge \exists!\alpha\in \RM, \forall \beta\in R: x^{-1}t_{ij}(\beta)x=\left\{ \begin{array}{cc}
t_{ij}(\alpha^{-1}\beta)& \text{if~} i=1\\
t_{ij}(\beta) & \text{if~} i\neq 1 
\end{array}  \right.\end{equation}
By Proposition~\ref{TijDefine:prop} and Lemma~\ref{Dn:lem} the right-hand side of the above equivalence is expressible using $L$-formulas. Just an explanation is in order here for the cases when $j=i+1$. Recall form proof of Corollary~\ref{char:firsttry:cor} that $x^{-1}(\pm I_n t_{i,i+1}(\beta))x=\pm I_n t_{ij}(\alpha\beta)$ indeed implies that $x^{-1}(t_{i,i+1}(\beta))x=t_{ij}(\alpha\beta)$. Note that if $x,y\in D_n$ both satisfy the right hand side of~\eqref{Delta1-defn:eqn} for the same $\alpha\in \RM$ then the actions of $x$ and $y$ on all elements of $G$ by conjugation are the same and therefore $xy^{-1}\in Z(G)$. This proves that $\Delta_i/Z(G)$ and $\RM$ are definably isomorphic and also that each $\Delta_i$ is definable in $D_n$ and therefore in $G$. By Corollary~\ref{char:firsttry:cor} and since $E_n$ is defined in $H$ by the same formula that defines $D_n$ and the uniformity of the action of the ring $R$ on the $t_{ij}$  the statement regarding the $\Lambda_i$ follows immediately. Parts (b) follows easily from (a). For (c) similar to (a) we can write an $L$-formula expressing that
\begin{align*} x\in Z(G)  \Leftrightarrow &x\in D_n \wedge \exists!\alpha\in \RM, \exists x_1\in \Delta_1, \ldots, \exists x_n\in\Delta_n, \forall \beta\in R\\ 
&x=x_1\cdots x_n  \wedge x_k^{-1}t_{ij}(\beta)x_k=\left\{ \begin{array}{ll}
t_{ij}(\alpha^{-1}\beta)& \text{if~} k=i\\
t_{ij}(\alpha\beta)& \text{if~}k=j\\
t_{ij}(\beta) & \text{if~} k\neq i,j 
\end{array}  \right.\end{align*}
and some basic properties of $T_n(R)$. Part (d) follows from (a)-(c). \end{proof}

\emph{Proof of Theorem~\ref{mainthm}.} We use the notation of Lemma~\ref{torus:lem}. Note that $\Lambda_i=\langle e_i(\alpha): \alpha \in \SM\rangle \cdot Z(H)$ where $e_i(\alpha)$ is the element whose action on the $s_{ij}(\beta)$ is precisely the same as those of $d_i(\alpha)$ on the $T_{ij}$ for $T_n(S)$. Assume that each $\Lambda_i$, $i=1, \ldots , n-1$ is defined by the 2-cocycle $f_i\in S^2(\SM,\SM)$as an extension of $Z(H)\cong \SM$ by $\Lambda_i/Z(H)\cong \SM$. Then it is clear that the $s_{ij}(\beta)$ and the $e_i(\alpha)$ generate $H$ and satisfy the exact relations (1)-(5) in the definition of $T_n(S,\bar{f})$ given at the beginning of Section~\ref{abdef:sec}. \qed

\section{Which abelian deformations of $T_n(R)$ are elementarily equivalent to it?}\label{suff:sec}

Theorem~\ref{mainthm} gives a necessary condition for a group $H$ to be elementarily equivalent to $T_n(R)$ for a characteristic zero integral domain $R$. In some cases the sufficient condition is actually a bit stronger than the one found in the referred theorem. In this section we prove a sufficient condition for certain classes of integral domains.

Let us recall that for ring $R$ a  2-cocycle $f:\RM\times \RM \to \RM$ is said to be \emph{coboundarious on torsion} or $\emph{CoT}$ if the restriction $g:T\times T \to \RM$ of $f$ to $T\times T$ is a 2-coboundary, where $T=T(\RM)$. Assume $A$ is an abelian extension of $A_1=\RM$ by $A_2=\RM$, and $T_2$ is the copy of $T$ in $A_2$. Then $f$ is CoT if and only if the subgroup $H$ of $A$ generated by $A_1$ and any preimage of $T_2$ in $A$ splits over $A_1$, i.e. $H\cong A_1\times T_2$. We will note later what is the rationale behind this definition.

\begin{rem}\label{split:rem}Assume for an abelian group $A$ we have $A\cong T \times B$ where $T$ and $B$ are some subgroups of $A$. Consider a symmetric 2-cocycle $f: A \to A$. By abuse of notation we consider $f$ as $f: T \cdot B \to T \times B$. Then $f$ is cohomologous to $(g_1g_2, h_1h_2)$ where $g_1\in S^2(T,T))$, $g_2\in S^2(T,B)$, $h_1\in S^2(B,T)$ and finally $h_2\in S^2(B,B)$. We will use this notation in the following. 

\end{rem}
We will need to state a few well-known definitions and results.
 
Let $B$ be an abelian group and $A$ a subgroup of $B$. Then $A$ is called a \emph{pure subgroup of $B$} if $\forall n\in \mbb{N}$, $nA=nB\cap A$.
\begin{lemma}\label{pure:lem}Let $A\leq B$ be abelian groups such that the quotient group $B/A$ is torsion-free. Then $A$ is a pure subgroup of $B$.\end{lemma}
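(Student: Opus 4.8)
The plan is to verify the defining equation $nA = nB \cap A$ for each $n \in \N$ by establishing the two inclusions separately. The inclusion $nA \subseteq nB \cap A$ is immediate and requires no hypothesis on the quotient: any element of $nA$ has the form $na$ with $a \in A \leq B$, so it lies in $nB$, and it also lies in $A$ since $A$ is a subgroup. Thus the content of the lemma is entirely in the reverse inclusion $nB \cap A \subseteq nA$, and this is precisely where the torsion-freeness of $B/A$ must enter.

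For the reverse inclusion, I would fix $x \in nB \cap A$, so that $x \in A$ and $x = nb$ for some $b \in B$, and aim to produce $a \in A$ with $x = na$. The key step is to pass to the quotient via the canonical projection $\pi \colon B \to B/A$. Applying $\pi$ yields $n\,\pi(b) = \pi(nb) = \pi(x) = 0$, the last equality holding because $x \in A = \ker \pi$. Hence $\pi(b)$ is an element of $B/A$ annihilated by $n$; since $B/A$ is torsion-free, this forces $\pi(b) = 0$, i.e. $b \in A$. Then the equation $x = nb$ with $b \in A$ exhibits $x$ as an element of $nA$, completing the inclusion and therefore the proof.

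I do not anticipate a genuine obstacle here: the argument is a direct unwinding of the definition of purity, and the only ingredient beyond formal manipulation is the elementary fact that an element killed by $n$ in a torsion-free group must be $0$. If anything, the one point worth stating explicitly is that the inclusion $nA \subseteq nB \cap A$ holds unconditionally, so that the entire role of the hypothesis is isolated in the single application of torsion-freeness above.
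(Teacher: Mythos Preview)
Your proof is correct and follows essentially the same approach as the paper: both dispense with the inclusion $nA \subseteq nB \cap A$ as trivial and then use torsion-freeness of $B/A$ to show that if $x = nb \in A$ then $b$ itself must lie in $A$. The only cosmetic difference is that the paper phrases this last step as a contradiction (assuming $b \notin A$ and deriving $nb \notin A$), whereas you argue directly via the projection; the content is identical.
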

\begin{proof}One direction is trivial. For the other direction assume that $g\in nB\cap A$. Then there is $h\in B$ such that $g=nh$. to get a contradiction assume that $h\notin A$. Then $g=nh\notin A$ since $B/A$ is torsion free. A contradiction! So $h\in A$, therefore $g=nh\in nA$.\end{proof}

An abelian group $A$ is called \emph{pure-injective} if $A$ is a direct summand in any abelian group $B$ that contains $A$ as a pure subgroup.

The following theorem expresses a connection between pure-injective groups and uncountably saturated abelian groups.
\begin{thm}[\cite{eklof}, Theorem 1.11]\label{ekthm} Let $\kappa$ be any uncountable cardinal. Then any $\kappa$-saturated abelian group is pure-injective.\end{thm}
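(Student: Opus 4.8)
The plan is to prove the equivalent statement that any $\aleph_1$-saturated abelian group is \emph{algebraically compact} (equivalently, \emph{equationally compact}), since algebraic compactness coincides with pure-injectivity for abelian groups, and $\aleph_1$-saturation is implied by $\kappa$-saturation for every uncountable $\kappa$. Recall that $A$ is algebraically compact if every finitely solvable system of linear equations over $A$ — a family $\{\sum_j n_{ij}x_j = a_i : i\in I\}$ with integer coefficients, finitely many variables per equation, and all $a_i\in A$ — admits a simultaneous solution in $A$ whenever each of its finite subsystems is solvable in $A$. So the first step is to invoke the classical equivalence ``pure-injective $\Longleftrightarrow$ algebraically compact'' for abelian groups, which reduces the theorem to producing solutions of finitely solvable systems.

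Next, let $A$ be $\aleph_1$-saturated and let $\Sigma$ be a finitely solvable system over $A$. The key observation is that $\Sigma$, read as a set of first-order formulas in the variables $x_j$ with parameters $a_i\in A$, is \emph{finitely satisfiable} in $A$: this is exactly the hypothesis that every finite subsystem has a solution. By compactness this set of formulas is consistent with the elementary diagram of $A$, hence it is a partial type over the parameter set $\{a_i : i\in I\}$. If this parameter set together with the variable set had size $<\kappa$, saturation would realize the type immediately and the realizing tuple would be the desired solution.

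The crux — and the only real obstacle — is a cardinality reduction: I must show that algebraic compactness of an abelian group need only be tested against \emph{countable} finitely solvable systems, so that mere $\aleph_1$-saturation suffices no matter how large an a priori system may be. This is where the special features of abelian groups, regarded as modules over the countable ring $\Z$, enter: the language is countable and the lattice of positive-primitive-definable subgroups in finitely many variables is countable, each being built from the subgroups $mA$ and the annihilators $A[m]$, so the divisibility-and-height content of a finitely solvable system is carried by only countably many conditions. Granting this reduction, let $\Sigma$ be a countable finitely solvable system over $A$; as above it determines a finitely satisfiable, hence consistent, partial type over a countable parameter set. Since $\aleph_0 < \aleph_1 \le \kappa$, $\kappa$-saturation realizes this type, and the realizing tuple solves $\Sigma$ in $A$.

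Carrying this out, every finitely solvable system over $A$ is solvable, so $A$ is algebraically compact and therefore pure-injective, as claimed. I expect the countable-reduction step to demand the most care, since it is precisely the point at which one must argue, using the module structure over $\Z$ rather than abstract model theory alone, that global solvability of an arbitrary system is controlled by countably much data; the remaining steps are then a routine translation between equation systems and finitely satisfiable types followed by a direct appeal to saturation.
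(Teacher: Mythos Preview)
The paper does not supply a proof of this theorem; it is quoted verbatim from Eklof--Fischer \cite{eklof} and used as a black box in the proof of Lemma~\ref{saturated-split:lem}. There is therefore no argument in the paper against which to compare your proposal.

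That said, your outline is the standard route and is essentially correct. The equivalence of pure-injectivity with algebraic compactness is classical, and your translation of a finitely solvable system into a finitely satisfiable partial type is exactly right. You have also correctly isolated the one nontrivial step, the cardinality reduction. A clean way to organise that step---which your remark on pp-definable subgroups is pointing toward---is to bypass arbitrary systems and use instead the characterisation: $A$ is pure-injective iff every finitely satisfiable pp-type in a \emph{single} variable over $A$ is realised in $A$. Since the language of $\Z$-modules is countable there are only countably many pp-formulas $\phi(x,\bar y)$, so any pp-$1$-type automatically involves at most countably many parameters, and $\aleph_1$-saturation realises it directly. This sidesteps the need to reduce an arbitrary system to a countable one. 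One minor point worth making explicit: your countable system may carry countably many unknowns, so the resulting type is in $\omega$ variables rather than finitely many; $\aleph_1$-saturation still suffices, but you should say why (realise the variables one at a time over the growing, still countable, parameter set).
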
 

\begin{rem}
\label{finducef*:rem}
Assume $A$ and $B$ are abelian group and $f\in S^2(B,A)$. Let $\D$ be an ultrafilter on a set $I$. Let $A^*$ and $B^*$ denote the ultrapowers of $A$ and $B$, respectively, over $(I,\D)$. Then $f$ induces a natural 2-cocycle $f^*\in S^2(B^*,A^*)$ representing an abelian extension of $A^*$ by $B^*$ (See Lemma 7.1 of~\cite{MS2009} for details)). \end{rem}
\begin{lemma}\label{saturated-split:lem} Assume $R$ is a characteristic zero integral domain so that the maximal torsion subgroup of $\RM$ is finite. Assume $f\in S^2(\RM, \RM)$ is CoT and $(I,\D)$ is an ultra-filter so that ultraproduct $(\RM)^*$ of $\RM$ over $\D$ is $\aleph_1$-saturated. Then the 2-cocycle $f^* \in S^2((\RM)^*,(\RM)^*)$ induced by $f$ is a 2-coboundary.\end{lemma}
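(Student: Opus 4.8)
The plan is to show that the abelian extension
\[1 \to M \xrightarrow{\mu} E \xrightarrow{\nu} M \to 1\]
of $M := (\RM)^*$ by $M := (\RM)^*$ determined by $f^*$ (see Remark~\ref{finducef*:rem}) splits; by the standard correspondence between equivalence classes of abelian extensions and elements of $Ext$, this is exactly the assertion that $f^*$ is a $2$-coboundary. Since $M=(\RM)^*$ is $\aleph_1$-saturated, Theorem~\ref{ekthm} tells us that $M$ is pure-injective, so $\mu(M)$ will be a direct summand of $E$ as soon as we know it is a \emph{pure} subgroup. Thus everything reduces to verifying purity of $\mu(M)$ in $E$. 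If the quotient $E/\mu(M)\cong M$ were torsion-free, Lemma~\ref{pure:lem} would give this for free; the only obstruction is therefore the torsion of $M$, and this is precisely where the finiteness of $T=T(\RM)$ and the CoT property of $f$ enter.

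First I would pin down the torsion of $M$. Since $T$ is finite it has an exponent $m$ and is defined in $\RM$ by $x^m=1$; moreover for each fixed $k$ the sentence $\forall x\,(x^k=1 \to x^m=1)$ holds in $\RM$. By \L o\'s's theorem these pass to $M$, so the torsion subgroup $T(M)$ equals $\{x\in M: x^m=1\}$, which is the canonical copy of the finite group $T$. Next I would use CoT to normalize $f^*$ on this torsion. By definition of CoT there is $\psi_0\colon T\to\RM$ with $f(x,y)=\psi_0(xy)\psi_0(x)^{-1}\psi_0(y)^{-1}$ for $x,y\in T$; as $T$ is finite, the induced cocycle $f^*$ agrees with $f$ on $T\times T$, so the same $\psi_0$ (composed with $\RM\hookrightarrow M$) witnesses that $f^*|_{T\times T}$ is a coboundary. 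Extending $\psi_0$ arbitrarily to a function $\psi\colon M\to M$ off $T$ and replacing $f^*$ by the cohomologous cocycle $f^*\cdot(\delta\psi)^{-1}$, I may assume outright that $f^*|_{T\times T}=1$, which changes neither the isomorphism type of $E$ nor the property of being a coboundary.

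With $f^*$ trivial on $T\times T$, the map $s\colon T\to E$, $t\mapsto(t,1)$, is a \emph{homomorphic} section of $\nu$ over $T\subseteq M$, because $(t_1,1)(t_2,1)=(t_1t_2,f^*(t_1,t_2))=(t_1t_2,1)$. Purity of $\mu(M)$ then follows: if $\mu(a)=\epsilon^{\,n}$ with $\epsilon\in E$ and $n\geq 1$, then $\nu(\epsilon)^n=\nu(\mu(a))=1$, so $b:=\nu(\epsilon)\in M[n]\subseteq T(M)=T$; writing $\epsilon=\mu(a')\,s(b)$ (legitimate since $\nu(\epsilon\, s(b)^{-1})=1$) and using that $E$ is abelian and that $s(b^n)=s(1)=1$, we obtain $\mu(a)=\epsilon^{\,n}=\mu(a')^{\,n}$, i.e. $a$ is an $n$-th power in the kernel copy of $M$. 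Hence $\mu(M)$ is pure in $E$, so by pure-injectivity it is a direct summand, the extension splits, and $f^*$ is a $2$-coboundary.

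I expect the delicate point to be the normalization step together with the control of torsion in the ultrapower: one must be sure that trivializing $f^*$ on $T$ is both \emph{possible} (this is exactly CoT, and fails for a general $f$) and \emph{harmless} (it only alters $f^*$ by a coboundary), and that the torsion of $M=(\RM)^*$ is no larger than the finite group $T$. The last point is indispensable, since in the purity argument it is what forces $b=\nu(\epsilon)$ into the domain $T$ of the homomorphic section $s$ and hence lets $s$ annihilate $b^n$; were $M$ to acquire extra torsion in the quotient, the section would not suffice and purity could genuinely fail.
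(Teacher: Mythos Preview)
Your argument is correct and is in fact somewhat cleaner than the paper's. The paper proceeds by first splitting $(\RM)^* = T^*\times B^*$ (using Lemma~\ref{tbytf:lem}), then invoking Remark~\ref{split:rem} to decompose $f^*$ into four components $g_1^*\in S^2(T^*,T^*)$, $g_2^*\in S^2(T^*,B^*)$, $h_1^*\in S^2(B^*,T^*)$, $h_2^*\in S^2(B^*,B^*)$, and killing them one at a time: CoT handles the first two, Lemma~\ref{tbytf:lem} handles $h_1^*$, and pure-injectivity of the torsion-free factor $B^*$ (via Theorem~\ref{ekthm} and Lemma~\ref{pure:lem}) handles $h_2^*$. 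You instead work globally: you normalize $f^*$ so that it vanishes on $T\times T$, obtain a genuine homomorphic section of $\nu$ over the finite torsion part, and use this section to verify purity of $\mu(M)$ in $E$ directly. Pure-injectivity is then applied to $M=(\RM)^*$ itself, whose $\aleph_1$-saturation is given outright in the hypothesis, whereas the paper must (implicitly) argue that the complement $B^*$ inherits saturation. Your route avoids both the four-fold $Ext$ decomposition and this extra saturation step, at the cost of the small normalization trick; both approaches ultimately rest on the same two ingredients, namely CoT to control the torsion and Theorem~\ref{ekthm} for the splitting.
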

\begin{proof}
Firstly note that $(\RM)^*=(R^*)^\times$ and $T((\RM)^*)=T^*\cong T$. Also $(\RM)^*/T^*$ is torsion-free or trivial. Assume it is not trivial. Since $\RM$ is infinite then $(\RM)^*$ is $\aleph_1$-saturated. Since $T^*$ is finite abelian and $(\RM)^*/T^*$ is torsion-free by Lemma~\ref{tbytf:lem} there is a subgroup, say $B^*$ of $(\RM)^*$ such that $(\RM)^*= T^*\times B^*$ as an internal direct product. The assumption that $f$ is CoT implies that the induced cocycle $f^*\in S^2((\RM)^*,(\RM)^*)$ is CoT. Then, using the notation of Remark~\ref{split:rem} we have that $g_1^*\in S^2(T^*,T^*)$ and $g^*_2\in S^2(T^*,B^*)$ are both coboundaries. The 2-cocycle  $h_1^*\in S^2(B^*,T^*)$ is a coboundary since $Ext(B^*,T^*)=1$ by Lemma~\ref{tbytf:lem}. The 2-cocycle $h_2^*\in S^2(B^*,B^*)$ is also a coboundary since $B^*$ is pure-injective by Theorem~\ref{ekthm} and also $B^*$ is a pure subgroup in an extension represented by $h_2^*$ because of Lemma~\ref{pure:lem}. Consequently $f^*$ is a coboundary.
\end{proof}     

\emph{Proof of Theorem~\ref{main(improved):thm}. }
The proof of $\Rightarrow$ direction is mostly included in the proof of Theorem~\ref{mainthm}. We just need to verify that each $f_i$ is in addition CoT. Again recall that $f_i$ corresponds to the extension $E_i=D_i(S,f_i)$ of $Z(H)\cong\SM$ by $E_i/Z(H)\cong\SM$. By assumption $D_i(R)/Z(G)$ has a finite maximal torsion subgroup, say $T$ of cardinality $c$. Since $D_i(R)/Z(G)\equiv D_i(S)/Z(H)$, $D_i(S)/Z(H)$ has the same (up to isomorphism) finite subgroup $T$ as the maximal torsion subgroup. So indeed there exists an $L$-formula $\Phi_T(x_1,\ldots x_c)$ describing the multiplication table of $T$. Now consider  the sentences 
\[\Theta_i\define \exists x_1, \ldots, \exists x_c \left( (\bigwedge_{j=1}^c x_j\in D_i)\wedge (x_j\notin Z(G) \vee x_j=1) \wedge \Phi_T(\bar{x})\right).\] 
Each $\Theta_i$ is clearly expressible in $L$. In plain language what they say is that the subgroup $D'_i$ of $D_i$ generated by $Z(G)$ and any representatives of elements of $T$ splits over $Z(G)$ which is true in $G$. Since $\Theta_i$ hold in $H$ too we make the same conclusion in $H$. That is, to say the $f_i\in S^2(\SM,\SM)$ need to be CoT. 

To prove $\Leftarrow$ direction we notice that $T_n(R)$ is interpretable in $R$ uniformly with respect to $Th(R)$. Indeed every matrix in $T_n(R)$ can be perceived as an element of $(R^*)^n\times R^{n(n-1)/2}$ which is a definable in $R$ and vice versa. The product and inversion on these tuples are computed by the polynomials that define matrix multiplication.  So $T_n(R)\equiv T_n(S)$ for any $S\equiv R$.

To conclude the proof we need to prove that $T_n(S)\equiv T_n(S,\bar{f})$ where each $f_i$ is CoT. Let $(I,\D)$ be an $\aleph_1$-incomplete ultrafilter. As usual, by $C^*$ we mean the ultrapower $C^I/\D$ of a structure $C$. Then $B((S^*)^\times)=B((\SM)^*)=B^*(\SM)$ is either trivial or $\aleph_1$-saturated. If $f^*_i\in S^2((\SM)^*,(\SM)^*)$ denotes the 2-cocycle induced by $f_i$ then for each $i=1, \ldots,n-1$, $f^*_i$ is a 2-coboundary by Lemma~\ref{saturated-split:lem}. The fact that $T^*_n(S,\bar{f})\cong T_n(S^*,\bar{f^*})$ requires only some routine checking. Therefore \[T^*_n(S,\bar{f})\cong T_n(S^*, \bar{f^*})\cong T_n(S^*)\cong T_n^*(S).\]  
This concludes the proof utilizing Keisler-Shelah's theorem.\qed

\section{The case of $T_n(F)$ where $F$ is an algebraically closed of characteristic zero or a real closed field} \label{closed:sec}

\emph{Proof of Theorem~\ref{charthm-alg-realclosed:thm}. }By Theorem~\ref{mainthm}, there exist $K\cong F$ and 2-cocycles $f_i\in S^2(\KM,\KM)$ such that $H\cong T_n(K,\bar{f})$.

If $F$ is algebraically closed so is $K$. Hence $Z(H)\cong \KM$ is divisible and it's a direct factor in every abelian group it is a subgroup of. So $Ext(\KM,\KM)=1$ and therefore $T_n(K)\cong T_n(K,\bar{f})$ for any choice of the 2-cocycles $f_i$.

Now assume $F$ is a real closed field. Therefore $K$ is a real closed field. For any real closed field $K$, $\KM=A \times B(K)$ where  $A=\{\pm 1\}$ and $B(K)$ is a torsion-free divisible group. 
Consider $\Lambda_i(K)$, from Lemma~\ref{torus:lem}, which is an abelian extension of $Z(H)\cong \KM$ by $\Lambda_i/Z(H)\cong \KM$.  Recall that the $\Lambda_i$ are the subgroups of $H$ defined in it by the same formulas that define the $\Delta_i$ in $G$. Let $A_1$ be the copy of $A$ sitting in $Z(H)$ and $A_2$ the one sitting in $\Lambda_i/Z(H)$. Now
\begin{align*}
Ext(\KM, \KM)&\cong Ext(B(K)\times A_2, B(K)\times A_1))\\
&\cong Ext(B(K),B(K))\times Ext(B(K),A_1)\\
&~~~\times Ext(A_2,B(K))\times Ext(A_2,A_1)\\
&\cong Ext(B(K),A_1)\times Ext(A_2,A_1).
\end{align*}
The last isomorphism holds since $B(K)$ is a divisible group. So to prove that $\Lambda_i$ is a split extension of $Z(H)$ by $\Lambda_i/Z(H)$ we need to prove that $A_1$ splits from the group $\Lambda_i$. 

Let us work inside $G$ and come up with an $L$-sentence which is obviously true in $G$ and states that $\Delta_i(G)$ splits over $A_1$. To that end let
\[\Delta^2_i=\{x^2: x\in \Delta_i\}.\] Thus $\Delta^2_i$ is a definable subgroup of $\Delta_i$. 
 The formula $\phi(x):x^2=1$ defines $A_1\times A_2$ in $\Delta_i$ and already implies the direct decomposition. So
\[\Delta_i=A_1\times A_2 \times \Delta^2_i\]
is a definable direct decomposition of $\Delta_i$. The formulas are uniform with respect to $Th(G)$, so the above implies that $A_1$ as a subgroup of $\Lambda_i$ splits from it.  \qed

\section{Isomorphisms between $T_n$ and abelian deformations of $T_n$}\label{abiso:sec} 
In this section we prove that if $Ext(\RM,\RM)\neq 1$ then  $T_n(R,\bar{f})$ is a genuinely new object for some of the $f_i\in Ext(\RM,\RM)$.

\begin{lemma}\label{split:lem} Assume $E$ is a split abelian extension  of $A$ by $B$, both written additively, defined by the 2-cocycle $f$ and $E'$ is an extension of $A'$ by $B'$ defined by the 2-cocycle $g$. Moreover assume there are isomorphisms $\psi$, $\phi$, and $\eta$ making the following diagram commutative. Then the 2-cocycle $g$ is a 2-coboundary.
\[\begin{CD}
0 @>>> A @>{\mu}>> E @>{\epsilon}>> B @>>> 0 \\
@. @VV{\psi}V @VV{\phi}V  @VV{\eta}V @. \\
0 @>>> A' @>>{\mu'}> E' @>>{\epsilon'}> B' @>>> 0
\end{CD}\]
 \end{lemma}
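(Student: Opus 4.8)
The plan is to use the standard correspondence recalled in the preliminaries, under which an abelian extension of $A'$ by $B'$ splits precisely when its defining $2$-cocycle lies in $B^2(B',A')$, i.e. is a $2$-coboundary. Thus it suffices to show that the bottom row of the diagram splits, and I would obtain this by transporting a splitting of the (split) top row across the vertical isomorphisms. Conceptually, the commutative diagram exhibits the two short exact sequences as \emph{isomorphic} (not merely congruent) extensions, and an isomorphism of short exact sequences preserves splitting; the explicit construction below is the concrete realization of that fact.

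First I would record that, since $E$ is a split abelian extension of $A$ by $B$, the surjection $\epsilon$ admits a homomorphism section $s\colon B\to E$ with $\epsilon\circ s=\mathrm{id}_B$. Next I would define the candidate section of the bottom row by $s'\define\phi\circ s\circ\eta^{-1}\colon B'\to E'$. As a composite of the group homomorphisms $\eta^{-1}$, $s$ and $\phi$, the map $s'$ is again a homomorphism of abelian groups, so the only thing left to check is that it splits $\epsilon'$.

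The key verification uses commutativity of the right-hand square, namely $\epsilon'\circ\phi=\eta\circ\epsilon$. I would then compute
\[\epsilon'\circ s'=\epsilon'\circ\phi\circ s\circ\eta^{-1}=\eta\circ\epsilon\circ s\circ\eta^{-1}=\eta\circ\mathrm{id}_B\circ\eta^{-1}=\mathrm{id}_{B'},\]
so $s'$ is a section of $\epsilon'$ and the bottom sequence splits. By the extension--cohomology correspondence this forces the class of $g$ in $Ext(B',A')$ to be trivial, i.e. $g$ is a $2$-coboundary, which is the desired conclusion.

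I do not anticipate a genuine obstacle; the entire content is that splittings transport along isomorphisms of exact sequences. The only point meriting a word of care is that the outer vertical maps $\psi$ and $\eta$ are arbitrary isomorphisms rather than identities, so we are dealing with an isomorphism of extensions rather than a congruence. The argument above is insensitive to this, since it uses only that $\eta$ is invertible and that the right square commutes; in fact $\psi$ never enters the construction of $s'$ at all. One should also keep track of the identification of the abstractly given $E'$ with the cocycle model $E(g)$, but this is exactly the bijection between split extensions and coboundaries recalled in the preliminaries, so the passage from ``the bottom row splits'' to ``$g\in B^2(B',A')$'' is immediate.
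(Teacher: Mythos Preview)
Your proof is correct, but it takes a different route from the paper's. You transport a \emph{section} of $\epsilon$ across the right-hand square via $s'=\phi\circ s\circ\eta^{-1}$, concluding that the bottom row splits and hence $g$ is a coboundary; the map $\psi$ indeed plays no role. The paper instead works entirely at the level of cocycles: it replaces the top row by the extension $0\to A'\xrightarrow{\mu\circ\psi^{-1}} E\xrightarrow{\eta\circ\epsilon} B'\to 0$, so that the diagram becomes a genuine equivalence of extensions of $A'$ by $B'$, and then computes that $g$ is cohomologous to $g'(x,y)=\psi^{-1}(f(\eta(x),\eta(y)))$, which is visibly a coboundary once one writes $f(x,y)=h(x+y)-h(x)-h(y)$. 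Your argument is shorter and more conceptual, exploiting only that isomorphisms of short exact sequences preserve splitting; the paper's argument is more explicit, producing an actual formula for the cohomology between $g$ and a coboundary, and it uses $\psi$ (through $\psi^{-1}$) in that formula. Either approach settles the lemma.
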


\begin{proof}
We can easily modify the diagram  in the statement to an equivalence of extensions as follows: 

\[\begin{CD}
0 @>>> A' @>{\mu\circ \psi^{-1}}>> E @>{\eta\circ \epsilon}>> B' @>>> 0 \\
@.  @| @VV{\phi}V  @| @. \\
0 @>>> A' @>>{\mu'}> E' @>>{\epsilon'}> B' @>>> 0
\end{CD}\]

This shows that the cocycle $g$ is cohomologous to the 2-cocycle $g'$ defined by
\[g'(x,y)=\psi^{-1}(f((\eta(x),\eta(y))).\]
By hypothesis $f$ is a 2-coboundary, i.e. there exists a function $h:B\to A$ such that $f(x,y)=h(x+y)-h(x)-h(y)$. since $\eta$ and $\psi^{-1}$ are both group isomorphisms

\[g'(x,y)=\psi^{-1}(h(\eta(x)+\eta(y)))-\psi^{-1}(h(\eta(x)))-\psi^{-1}(h(\eta(y))),\]

proving that $g'$ is a 2-coboundary. But $g$ and $g'$ are cohomologous, which proves that $g$ is 2-coboundary.  
\end{proof} 

\begin{lemma}\label{abdef-notiso:lem} Assume $R$ and $S$ are characteristic zero integral domains with unit. Let $\phi: G=T_n(R, \bar{f}) \to T_n(S)=H$ be an isomorphism of abstract groups. Then $R\cong S$ as rings and all the symmetric 2-cocycles $f_i$ are 2-coboundaries.\end{lemma}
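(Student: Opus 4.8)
The plan is to transport the characteristic and definable structure of $G=T_n(R,\bar f)$ across $\phi$, recover the ring from the unitriangular part, and then reduce the triviality of the cocycles $f_i$ to Lemma~\ref{split:lem} applied to the two diagonal subgroups. An abstract isomorphism carries characteristic (in particular parameter-free definable) subgroups to their counterparts, so $\phi(G')=H'$, $\phi(Fitt(G))=Fitt(H)$, $\phi(Z(G))=Z(H)$, and $\phi(G_{u^\pm})=H_{u^\pm}$; consequently $\phi(UT_n(R))=UT_n(S)$, since $G_u$ is either $G'$ or the canonical index-two subgroup of $G_{u^\pm}$. For the first assertion I would use that, by Lemma~\ref{utn-coord:lem}, the ring $R$ is interpretable in $UT_n(R)$ relative to the transvections $\bar t$; the group isomorphism $\phi|_{UT_n(R)}\colon UT_n(R)\to UT_n(S)$ then transports this interpretation isomorphically onto the ring interpreted in $UT_n(S)$ relative to $\bar s=\phi(\bar t)$, which by the coordinatization of the honest group $UT_n(S)$ is isomorphic to $S$. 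Hence $R\cong S$.

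For the cocycles, put $\bar d=(d_1(-1),\dots,d_n(-1))$ and $\bar e=\phi(\bar d)$, and let $E_n(G)=D_n(R,\bar f)$ be the diagonal subgroup of $G$. Since $E_n(G)$ is the centralizer $C_G(\bar d)$ of the involutions $d_i(-1)$ (cf. Lemma~\ref{Dn:lem}), we get $\phi(E_n(G))=C_H(\bar e)=:E_n(H)$. By Lemma~\ref{well-def:lem} we have the internal decomposition $G=E_n(G)\ltimes UT_n(R)$, so applying $\phi$ yields $H=E_n(H)\ltimes UT_n(S)$; thus $E_n(H)$ is an abelian complement to $UT_n(S)$ inside the honest group $T_n(S)$.

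I would then analyze $E_n(H)$ intrinsically in $T_n(S)$. A complement to a normal subgroup is always isomorphic to the corresponding quotient, so $E_n(H)\cong T_n(S)/UT_n(S)\cong D_n(S)\cong(\SM)^n$. Moreover $Z(H)=\phi(Z(G))\leq\phi(E_n(G))=E_n(H)$, and under the isomorphism $E_n(H)\xrightarrow{\sim}D_n(S)$ induced by the quotient map the center $Z(H)$, being the subgroup of scalar matrices of $T_n(S)$, is sent to the scalar subgroup of $D_n(S)$, which is a direct factor of $D_n(S)=(\SM)^n$. Therefore $Z(H)$ is a direct factor of $E_n(H)$; equivalently $E_n(H)$ is a \emph{split} abelian extension of $Z(H)\cong\SM$ by $E_n(H)/Z(H)$.

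Finally I would feed this into Lemma~\ref{split:lem}. Writing the groups additively, $\phi^{-1}$ together with its restriction to centers and the induced map on quotients gives a commutative diagram
\[\begin{CD}
0 @>>> Z(H) @>>> E_n(H) @>>> E_n(H)/Z(H) @>>> 0 \\
@. @VV{\phi^{-1}}V @VV{\phi^{-1}}V @VV{\overline{\phi^{-1}}}V @. \\
0 @>>> Z(G) @>>> E_n(G) @>>> E_n(G)/Z(G) @>>> 0
\end{CD}\]
whose vertical arrows are isomorphisms, whose top row is split by the previous paragraph, and whose bottom row is the extension of $Z(G)\cong\RM$ by $(\RM)^{n-1}$ determined by $(f_1,\dots,f_{n-1})$. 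Lemma~\ref{split:lem} forces the defining cocycle of the bottom row to be a $2$-coboundary; since $Ext((\RM)^{n-1},\RM)\cong\prod_{i=1}^{n-1}Ext(\RM,\RM)$, this is equivalent to each $f_i$ being a $2$-coboundary, completing the argument. The main obstacle is the third paragraph: one must show that in the honest group $T_n(S)$ the (a priori non-standard) complement $E_n(H)=\phi(E_n(G))$ still has $Z(H)$ as a direct factor, for this splitness is exactly the hypothesis of Lemma~\ref{split:lem} and is \emph{not} available by first-order transfer from $G$ (indeed it fails in $G$ when some $f_i$ is nontrivial); the ``complement is isomorphic to the quotient'' observation, combined with the fact that the scalars split off $D_n(S)$, is what makes it go through.
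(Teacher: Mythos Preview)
Your overall strategy matches the paper's: recover $R\cong S$ from the unitriangular part and then reduce the cocycle statement to Lemma~\ref{split:lem}. The gap is the claim that $\phi(UT_n(R))=UT_n(S)$. While $G'$, $Fitt(G)$, $Z(G)$ and $G_{u^\pm}$ are characteristic, $UT_n(R)$ itself is \emph{not} when $G'\neq G_u$ (e.g.\ $R=\Z$, where the ideal generated by $1-\alpha$, $\alpha\in\RM$, is $2\Z$). Inside $G_{u^\pm}=UT_n(R)\times\{\pm I_n\}$ the subgroup $UT_n(R)$ is just one of several complements of $\{\pm I_n\}$: any homomorphism $\chi:UT_n(R)\to\{\pm I_n\}$ invariant under the $E_n$-action (these exist, since that action on $UT_n(R)^{\mathrm{ab}}$ is by signs) gives an automorphism $xg\mapsto xg\,\chi(g)$ of $G$ carrying $UT_n(R)$ to a different complement. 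So there is no ``canonical index-two subgroup'', and both your appeal to $\phi|_{UT_n(R)}:UT_n(R)\to UT_n(S)$ and the step ``applying $\phi$ yields $H=E_n(H)\ltimes UT_n(S)$'' are unjustified as written.

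The paper confronts exactly this point. It first shows only that $\phi(UT_n(R))$ is \emph{some} complement of $\{\pm I_n\}$ in $H_{u^\pm}$, writes $\phi|_{UT_n(R)}=\phi_1\cdot\delta'$ with $\phi_1:UT_n(R)\to UT_n(S)$ an isomorphism and $\delta':UT_n(R)\to\{\pm I_n\}$ a homomorphism (whence $R\cong S$ via Belegradek), and then \emph{twists} $\phi$ to a new isomorphism $\psi$ satisfying $\psi(UT_n(R))=UT_n(S)$; only then is the quotient argument and Lemma~\ref{split:lem} applied. Your route can also be salvaged without the twist: since $-I_n\in E_n(H)$, $E_n(H)\cap\phi(UT_n(R))=1$, and $\phi(UT_n(R))\cdot\{\pm I_n\}=UT_n(S)\cdot\{\pm I_n\}$, one checks directly that $E_n(H)\cap UT_n(S)=1$ and $E_n(H)\cdot UT_n(S)=H$, so $E_n(H)$ really does complement $UT_n(S)$ and your third paragraph then goes through. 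But this verification is the missing step, not a consequence of $\phi$ preserving $UT_n$.
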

\begin{proof} 
 Since $G'$ and $H'$ are characteristic subgroups of the corresponding groups $\phi$ restricts to an isomorphism of $G'$ onto $H'$. Recall from Lemma~\ref{G':lemma} that
 \[G'=\langle t_{i,i+1}((1-\alpha)\beta), t_{kl}(\beta):1\leq i\leq n-1, 1< k+1<l\leq n, \alpha\in\RM, \beta\in R\rangle,\]
 and 
  \[H'=\langle t_{i,i+1}((1-\alpha)\beta), t_{kl}(\beta):1\leq i\leq n-1, 1< k+1<l\leq n, \alpha\in\SM, \beta\in S\rangle.\]
  In particular all the generators of $UT_n(R)$ (resp. $UT_n(S)$) are in $G'$ (Resp. $H'$) except possibly $t_{i,i+1}(\beta)$ where $2\nmid \beta$, $\beta\in R$ (resp. $\beta\in S$). Consider $h=t_{i,i+1}(\beta)$, $2\nmid \beta$, if such $\beta\in S$ exists. Then $h^2\in H'$. Since $h\in Fitt(H)=UT_n(H)\times Z(H)$ and $Fitt(H)$ is a characteristic subgroup of $H$ and $\phi$ is an isomorphism, there is $g\in Fitt(G)$ such that, $g\in UT_n(R)\times Z(G)$, $g^2\in G'\leq UT_n(R)$ and  $\phi(g)=h$. Since $R$ is a characteristic zero integral domain $UT_n(R)$ is torsion-free. This implies that elements of $UT_n(R)$ have unique roots in $UT_n(R)$ if they have any. Moreover the only element of order 2 in $Fitt(G)$ is $-I_n\in Z(G)\cong \RM$ again since $R$ is an integral domain. So either $g\in UT_n(R)$ or $-I_ng\in UT_n(R)$. This immediately implies that either $h=t_{i,i+1}(\beta)$ or $h=-I_nt_{i,i+1}(\beta)$. Indeed what we discovered is that $\phi$ restricts to an isomorphism of $UT_n(R)\times \ZZ$ onto $UT_n(S)\times \{\pm I_n\}$, where $\phi(- I_n)=- I_n$ and therefore $\phi(UT_n(R))$ is a complement of $\{\pm I_n\}$ in $UT_n(S)\times \{\pm I_n\}$. That is to say there is a derivation $\delta: UT_n(S) \to \{\pm I_n\}$ where $\phi(g)=\phi_1(g)\delta((\phi_1(g))$ for some uniquely determined $\phi_1(g)\in UT_n(S)$. Note that $\delta$ is actually a homomorphism since its range is included in the center $Z(H)$ of $H$. Therefore $\phi_1:UT_n(R)\to UT_n(S)$ is also a homomorphism. It is easy to verify that $\phi_1$ is an isomorphism of $UT_n(R)$ onto $UT_n(S)$. Then by (\cite{beleg99}, Theorem 1.14.1) $R\cong S$. This proves the first part of the claim.
  
  Indeed what we showed above is that there are homomorphisms $\phi_1:UT_n(R)\to UT_n(S)$ and $\delta': UT_n(R)\to \{\pm I_n\}$ where $\phi(g)=\phi_1(g)\delta'(g)$,  $\delta'(g)=\delta(\phi(g))$ for every $g\in UT_n(R)$, and $\delta'(t)=1$ for every $t\in G'$. The later statement implies that 
  \begin{equation}\label{delta-invar:eq} \delta'(x^{-1}gx)=\delta'(g), ~\forall g\in UT_n(R), \forall x\in G.\end{equation}
  
  Next we use the homomorphism $\delta$ above in an obvious way to twist $\phi$ into an isomorphism $\psi:G\to H$ such that $\psi(UT_n(R))=UT_n(S)$. To that end for $x\in D_n(R)$ and $g\in UT_n(R)$ define 
 \[\psi(xg)=\phi(x)\phi(g)\delta'(g).\]   
 Then one can use \eqref{delta-invar:eq} to prove that $\psi$ is a homomorphism which is clearly bijective. Moreover for any $g\in UT_n(R)$  
 \[\psi(g)=\phi_1(g)(\delta'(g))^2=\phi_1(g)\in UT_n(S).\]
 Now it is clear that
 \[D_n(R)\cong \frac{G}{UT_n(R)}\cong \frac{H}{UT_n(S)} \cong D_n(S,\bar{f}).\]
 On the other hand since $\psi(Z(G))=Z(H)$ the isomorphism above takes $Z(G)\cong\RM$ to $Z(H)\cong \SM$. So applying Lemma~\ref{split:lem} and considering that 
 \[Ext((\SM)^{n-1},\SM)\cong \prod_{i=1}^{n-1} Ext(\SM,\SM),\]
 we conclude that all the $f_i$ are 2-coboundaries or equivalently $D_n(S,\bar{f})$ splits over $Z(H)$.      
\end{proof}

\section{Abelian deformations of $T_n(\Q)$ which are not isomorphic to any $T_n(K)$ for any field $K$}\label{Tn(Q):sec}
 
\begin{lemma}\label{Z-inter-Q*:lem}
There exists a countable field $K$, such that $\Q\equiv K$ and also $Ext(K^\times,K^\times)\neq 1$. 
\end{lemma}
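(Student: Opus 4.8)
The plan is to find a countable $K\equiv\Q$ whose torsion-free multiplicative part $V\define\KM/\{\pm1\}$ fails to be free, and then to extract the nontriviality of $Ext(\KM,\KM)$ (and, with an eye on Theorem~\ref{elemnotiso-Q:thm}, a \emph{CoT} non-coboundary) by a short homological computation. This detour through a field other than $\Q$ is genuinely needed: for $K=\Q$ one has $\Q^\times/\{\pm1\}$ free on the primes, so the only contribution to $Ext(\Q^\times,\Q^\times)$ comes from the torsion $\{\pm1\}$ and is never CoT. I first record that for every $K\equiv\Q$ the sentences ``$x^m=1\to x^2=1$'' ($m\in\N$) force $T(\KM)=\{\pm1\}$; hence $V$ is torsion-free and there is a short exact sequence $0\to\Z/2\to\KM\xrightarrow{\pi}V\to0$.

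To build $K$ I would use compactness together with downward L\"owenheim--Skolem. Expand the language by constants $c_1,c_2,\ldots$ and adjoin to $Th(\Q)$ the axioms $c_1\neq0$, $c_1^m\neq1$ for all $m\geq1$, and $c_{k+1}^{\,k+1}=c_k$ for all $k\geq1$. Any finite fragment mentions only $c_1,\ldots,c_N$ and is satisfied in $\Q$ by setting $c_k=2^{N!/k!}$; hence the theory is consistent and, by L\"owenheim--Skolem, has a countable model $K\equiv\Q$. In $K$ the images $g_k\define c_k\{\pm1\}\in V$ satisfy $(k+1)g_{k+1}=g_k$ with $g_1$ of infinite order, so the subgroup they generate is torsion-free and divisible, i.e. a copy of $\Q$ sitting inside $V$; in particular $V$ is not free.

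Finally I would turn non-freeness of $V$ into $Ext(\KM,\KM)\neq1$. Since a divisible subgroup of an abelian group is a direct summand, $V\cong\Q\oplus V'$, so $Ext(V,\Z)$ has the nonzero group $Ext(\Q,\Z)$ as a direct summand. By Lemma~\ref{tbytf:lem} we have $Ext(V,\Z/2)=0$, whence the sequence above splits and $\KM\cong\Z/2\oplus V$; the resulting section of $\pi$ makes $\pi^\ast\colon Ext(V,\KM)\hookrightarrow Ext(\KM,\KM)$ injective. Moreover $\langle2\rangle\cong\Z$ is pure in $\KM$---the first-order statement ``$2$ is not, up to sign, a proper power'' holds in $\Q$ and transfers to $K$---and, being finitely generated and pure, it is a direct summand, so the inclusion $\iota\colon\Z\hookrightarrow\KM$ induces an injection $\iota_\ast\colon Ext(V,\Z)\hookrightarrow Ext(V,\KM)$. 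Composing, $\pi^\ast\iota_\ast$ sends a nonzero class of $Ext(V,\Z)$ to a nonzero class of $Ext(\KM,\KM)$, giving $Ext(\KM,\KM)\neq1$. As this class is pulled back along $\pi$, which annihilates the torsion $\{\pm1\}$, its representing cocycle restricts trivially to $T(\KM)\times T(\KM)$; that is, it is CoT, exactly as the construction in Section~\ref{Tn(Q):sec} requires.

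The main obstacle is the middle step: producing a countable field elementarily equivalent to $\Q$ whose multiplicative group has a non-free torsion-free part. Elementary equivalence of the ambient groups alone does not determine freeness, so one must deliberately force an infinitely divisible element (here by compactness) and only then descend to a countable model. Once $V$ is known to be non-free, the homological transfer is routine, using only Lemma~\ref{tbytf:lem} and the fact that $\langle2\rangle$ is a pure cyclic---hence split---summand of $\KM$.
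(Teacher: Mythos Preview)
Your compactness construction of $K$ is valid and more elementary than the paper's route, which passes through a countable nonstandard $R\equiv\Z$, takes $K$ to be its field of fractions, and then uses J.~Robinson's definability of $\Z$ in $\Q$ together with the arithmetical definability of exponentiation to embed a copy of $R^+$ into $\KM$. Both approaches end by exhibiting a nonzero torsion-free divisible subgroup of $V=\KM/\{\pm1\}$; yours does so directly. The reductions $T(\KM)=\{\pm1\}$ and $\KM\cong\{\pm1\}\times V$ (via Lemma~\ref{tbytf:lem}) are also fine.

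The gap is the assertion ``being finitely generated and pure, it is a direct summand.'' This is false in general: in $\Z\oplus\Z_2$ (with $\Z_2$ the $2$-adic integers) the element $(2,1)$ is not a proper power, so $\langle(2,1)\rangle\cong\Z$ is pure, yet $Hom(\Z_2,\Z)=0$ forces every homomorphism to $\Z$ to send $(2,1)$ into $2\Z$, and $\langle(2,1)\rangle$ is not a summand. Hence purity of $\langle 2\rangle$ in $\KM$ does not give you the injection $\iota_*\colon Ext(V,\Z)\hookrightarrow Ext(V,\KM)$, and the chain breaks there. The fix is to drop $\langle 2\rangle$ and finish as the paper does: decompose $V=D\times A$ with $D$ the maximal divisible subgroup (nonzero, since it contains your copy of $\Q$) and $A$ reduced; the first-order fact ``$2$ is not a square'' transfers from $\Q$ and gives $V\neq 2V$, hence $A\neq 0$. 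Then $Ext(D,A)\neq 0$ by (\cite{fox}, 54.5), and since $D$ and $A$ are both direct factors of $\KM$ this injects into $Ext(\KM,\KM)$, visibly as a CoT class.
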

\begin{proof} By (\cite{beleg99}, Proposition 2.2.16) there exists a countable ring $R$ where $R\equiv \Z$ and $Ext(R^+,R^+)\neq 0$. That is because for such a ring $R$, $R^+=A\oplus D$ where $A\neq 0$ is a reduced abelian group and $D\neq 0$ is a countable torsion-free divisible abelian group. Then by (\cite{fox}, 54.5) $Ext(D,A)\neq 0$, which implies that $Ext(R^+,R^+)\neq 0$.

\noindent{\bf Claim:} Let $K$ be the field of fraction of the integral domain $R$ discussed above. Then $K^\times \cong \{\pm 1\} \times A \times D$ where $A$ and $D$ are both non-trivial, $A$ is reduced and $D$ is countable, torsion-free and divisible.

\noindent{\it Proof of the claim.} It is well-known that $\QM= \{\pm 1\}\times C$ where $C\cong\prod_{i\in \omega}C_{\infty}$, and $C_{\infty}$ is the infinite cyclic group. Firstly we show that $C$ is definable in $\QM$ uniformly with respect to $Th(\Q)$. By a theorem of \cite{julia} the ring $\Z$ in definable in $\Q$.  The arithmetic $\N=\langle |\N|, +,\times , 0,1\rangle $ is definable in $\Z$. In particular the natural order on $\Z$ is definable in $\Z$. Since $\Z$ is definable in $\Q$ it is easy to see that natural order on $\Q$ is definable in $\Q$. Now $\Q^\times$ is definable in $\Q$ as the set of invertible elements. The subgroup $C$ is definable in $\QM$ as the set of all positive elements of $\Q$. The subgroup $\{\pm 1 \}$ is obviously definable in $\QM$.  

 We note that $\Q$ is interpretable in $\Z$ and the same formulas that interpret $\Q$ in $\Z$ interpret $K$ in $R$. So indeed $\Q\equiv K$. The formulas that define $\Z$ in $\Q$ define a subring $R$ of $K$ in $K$ which is elementarily equivalent to $\Z$. Again the formulas that define $C$ in $\Q$ define a subgroup $E$ of the multiplicative group $\KM$ of $K$ in $K$. So clearly $\KM=\{\pm 1\}\times E$. Next we will show that $E$ has a direct factor which is isomorphic as an abelian group to $R^+$. 
 By a well-known result of G\"odel all primitive recursive functions on $\N$ are arithmetically definable, in particular the predicate $z=2^x$ is arithmetically definable. That is, there is a first-order formula $exp(z,y,x)$ of the language of arithmetic so that 
\[z=2^x \Leftrightarrow \N\models exp(z,1+1,x).\]
In particular it can be seen that the set $2^\N=\{2^n: n\in \N\}$ has a definable arithmetic structure also denoted by $2^\N$. Since $\N$ is definable in $\Z$ so is $2^\N$. This implies that $2^\Z$ is definable in $\Q$ and also it carries a ring structure interpretable in $\Q$ and isomorphic to the ring of integers $\Z$. on the other hand $2^\Z \subset \Q^{\times}$. The formulas that define an arithmetic structure on $2^\Z$ define in $K$ a  subgroup $2^R\leq E < \KM$ of $K$ with a definable ring structure isomorphic to $R$. Now $R^+\cong 2^R \cong A'\times D'$  where both $A'$ and $D'$ are non-empty and $D'$ is torsion-free divisible. This means that $C= A \times D$ where both $A$ and $D$ are non-empty, $A$ is reduced torsion-free and $D$ is countable, torsion-free and divisible. This ends the proof of the claim.

As a corollary of the claim and the fact $Ext(D,A)\neq 0$ we can conclude that $Ext(\KM,\KM)\neq 1$.        \end{proof}

\emph{Proof of Theorem~\ref{elemnotiso-Q:thm}.} Pick the field $K$ found in Lemma~\ref{Z-inter-Q*:lem}. We can clearly find a CoT 2-cocycle $f\in S^2(\KM,\KM)$ which is not a 2-coboundary. Set, say, $f_1=f$ and the let the rest of the $f_i$ be trivial and form $H=T_n(K,\bar{f})$. By Theorem~\ref{main(improved):thm}, $G\equiv H$. But by the construction of $K$ and Lemma~\ref{abdef-notiso:lem}, $G\ncong H$, otherwise $f$ would be a 2-coboundary.\qed 
\section{ The case of $T_n(\OR)$, $\OR$ ring of integers of a number field}\label{Tn(O):sec}

In subsection~\ref{case1:subsec} we will prove part (1) of Theorem~\ref{charthm-ringofinter:thm} which is actually easy given what we have developed so far. In
subsection~\ref{case2:subsec} we will address the second part of the theorem.
\subsection{The case of finite $\OM$}\label{case1:subsec}
\emph{Proof of Theorem~\ref{charthm-ringofinter:thm}, Part 1.} By Theorem~\ref{main(improved):thm}, $H\cong(T_n(S,\bar{f}))\cong E_n\ltimes UT_n(S)$, where $E_n\equiv D_n$. But here $D_n$ is finite, so $E_n\cong D_n$ meaning all $f_i$ are coboundaries. So $T_n(S,\bar{f})\cong T_n(S)$. \qed

\subsection{The case of infinite $\OM$}\label{case2:subsec} 
\begin{lemma}\label{x^Z:inter:lem} If $\OM$ is infinite then for some $\lambda\in \OM$ of infinite order The set $\lambda^\Z$ of all integer powers of $\lambda$ is definable in $\OM$. Indeed there is a ring isomorphism $\lambda^\Z\cong \Z$ which is interpretable in $\OR$.\end{lemma}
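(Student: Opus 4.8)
The plan is to exploit the interpretability of the full arithmetic of $\Z$ inside the ring $\OR$, and then to recover $\lambda^\Z$ as the definable image of $\Z$ under the exponentiation map $n\mapsto\lambda^n$. First I would fix the unit: since $\OR$ is the ring of integers of a number field $K$, Dirichlet's unit theorem gives $\OM\cong\mu\times\Z^r$, where $\mu=T(\OM)$ is the finite cyclic group of roots of unity in $\OR$ and $r=r_1+r_2-1$ is the unit rank. The hypothesis that $\OM$ is infinite forces $r\geq 1$, so there is some $\lambda\in\OM$ of infinite order (any fundamental unit will do); fix such a $\lambda$.

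Next I would set up the arithmetic. By Julia Robinson's results~\cite{julia} the prime subring $\Z$ is first-order definable in $\OR$, so the standard model $\langle\N,+,\times,0,1\rangle$ is interpretable in $\OR$. Conversely $\OR$ is a free $\Z$-module of rank $d=[K:\Q]$ whose multiplication, written in a fixed integral basis, is given by $\Z$-bilinear polynomial operations; hence $\OR$ is in turn interpretable in $\Z$, with ring elements coded as $d$-tuples of integers. Inside this interpreted arithmetic the exponentiation function $e\colon\Z\to\OR$, $e(n)=\lambda^n$, is primitive recursive, being defined by $e(0)=1$, $e(n+1)=e(n)\cdot\lambda$ and $e(-n)=e(n)^{-1}$. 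By G\"odel's theorem that every primitive recursive function is arithmetically definable (via the $\beta$-function coding of finite sequences), the graph $\Gamma=\{(n,\lambda^n):n\in\Z\}$ is definable in $\OR$.

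From the definability of $\Gamma$ both assertions follow. Projecting onto the second coordinate shows that $\lambda^\Z=\{y:\exists n\,(n,y)\in\Gamma\}$ is definable in $\OR$, and by construction it is a subset of the definable subgroup $\OM$. For the ring structure I would transport the operations of $\Z$ along the definable bijection $n\mapsto\lambda^n$: group multiplication already realizes addition, since $\lambda^m\cdot\lambda^n=\lambda^{m+n}$, while the second operation $\lambda^m\star\lambda^n\define\lambda^{mn}$ is definable because $\Gamma$ is, and it realizes multiplication. Thus $(\lambda^\Z,\cdot,\star)\cong(\Z,+,\times)$ is a ring isomorphism interpretable in $\OR$, as required.

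The main obstacle, and the only point where more than the additive structure is used, is the definability of the exponential graph $\Gamma$. The map $n\mapsto\lambda^n$ cannot be defined in the pure abelian group $\OM$ alone, whose theory is decidable and, in elementary extensions, admits nonstandard powers; this is exactly why the argument must genuinely pass through the ring $\OR$ and the definability of $\Z$ in it. The G\"odel coding step is what converts the recursive definition of exponentiation into a first-order formula, and so it is the technical heart of the proof.
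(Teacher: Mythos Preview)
Your proof is correct and takes a genuinely different route from the paper's. You invoke Julia Robinson's theorem to get $\Z$ definable in $\OR$, then use the bi-interpretability of $\OR$ with $\Z$ (via a fixed integral basis) together with G\"odel's $\beta$-function coding to make the exponential graph $\{(n,\lambda^n):n\in\Z\}$ definable, and finally project and transport the ring operations. The paper instead adapts Romanovskii's construction: it first isolates a torsion-free definable subgroup $B\leq\OM$ in which every nontrivial definable subgroup has finite index, then writes down an explicit ring-language predicate $\Psi(\alpha,\beta,\delta,a)$ built from divisibility conditions of the shape $\alpha\lambda^i-1\mid\delta-1$ and $1+(\beta-1)\alpha\mid a$, and uses three number-theoretic lemmas about units and divisors in $\OR$ (finiteness of solutions to certain divisibility systems, and a rigidity statement for associates of $\lambda^m-1$) to show that an inductive formula in $\Psi$ pins down exactly $\lambda^{\N}$; the arithmetic structure on $\lambda^{\N}$ is then imported from Romanovskii's paper. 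Your approach is shorter and more conceptual, leaning on two heavy external theorems; the paper's approach is more hands-on and yields an explicit defining formula without appealing to Robinson's result. Both are valid, and for the downstream application (Proposition~\ref{extneq0:prop}) either suffices, since all that is needed is that the formulas are uniform enough to transfer to nonstandard models $R\equiv\OR$.

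One small remark: the lemma as stated says ``definable in $\OM$'', but both your argument and the paper's actually work in the ring $\OR$; your closing paragraph correctly observes that no such definition is possible in the pure abelian group $\OM$, whose theory is decidable. This is a minor imprecision in the statement rather than a flaw in either proof.
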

The proof is actually a simple modification of the interpretation of the arithmetic in a metabelian polycyclic group satisfying specific conditions in~\cite{Romanovskii}. We need to quote a few lemmas from~\cite{Romanovskii} before we get to the proof. Let $K$ be the field of fractions of $\OR$ and let $deg(K/\Q)=s$.
\begin{lemma}[Lemma 2,~\cite{Romanovskii}] \label{RomLem2:lem} Let $\lambda_1, \ldots, \lambda_s$ be distinct elements of $\OR$, $\delta\in \OR$ where $\delta\neq 0$. Then there are only finitely many $\alpha\in \OR$ for which the elements $\alpha-\lambda_1$, \ldots $\alpha-\lambda_s$ divide $\delta$ in $\OR$. \end{lemma}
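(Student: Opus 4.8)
The plan is to pass from the divisibility conditions in $\OR$ to bounds on the archimedean absolute values of $\alpha$ under the $s$ complex embeddings of $K$, and then invoke the fact that $\OR$ sits as a lattice under the Minkowski embedding, so that only finitely many $\alpha$ can have all of these absolute values simultaneously bounded.

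First I would fix the $s=deg(K/\Q)$ embeddings $\sigma_1,\ldots,\sigma_s:K\hookrightarrow\mathbb{C}$ and recall that for $\gamma\in\OR$ the norm $N(\gamma)=\prod_{j=1}^s\sigma_j(\gamma)$ is a rational integer, is multiplicative, and satisfies $|N(\gamma)|\ge 1$ for $\gamma\neq 0$. If $\gamma$ divides $\delta$ in $\OR$ then $N(\gamma)$ divides $N(\delta)$ in $\Z$, so $1\le|N(\gamma)|\le M$, where $M:=|N(\delta)|\ge 1$. Applying this to each $\gamma=\alpha-\lambda_i$ (all nonzero, since $\alpha=\lambda_i$ would force $0\mid\delta$) gives, for every $i$, the bound $1\le\prod_{j=1}^s|\sigma_j(\alpha)-\sigma_j(\lambda_i)|\le M$. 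Writing $x_j=\sigma_j(\alpha)$ and $a_{ij}=\sigma_j(\lambda_i)$, and organizing the distances $D_{ij}=|x_j-a_{ij}|$ into an $s\times s$ matrix, the goal becomes to bound $\max_{i,j}D_{ij}$: since $|x_j|\le D_{ij}+\max_{i,j}|a_{ij}|$, bounding all the $D_{ij}$ bounds all the $|\sigma_j(\alpha)|$.

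Two facts drive the argument. First, every row product $\prod_j D_{ij}=|N(\alpha-\lambda_i)|$ lies in $[1,M]$. Second, since $\sigma_j$ is injective the numbers $a_{1j},\ldots,a_{sj}$ are distinct, so choosing $\rho>0$ with $|a_{ij}-a_{i'j}|\ge 2\rho$ for all $j$ and all $i\neq i'$, at most one entry per column can satisfy $D_{ij}<\rho$ (two such in one column would place two of the $a_{ij}$ within $2\rho$ of each other). Now suppose some entry $D_{i_1j_1}$ is very large; then $|x_{j_1}|$ is large, so \emph{every} entry of column $j_1$ is large, and the row-product bound $\le M$ forces, in each row $i$, some entry outside column $j_1$ to be smaller than $\rho$. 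This produces $s$ ``small'' entries lying in the $s-1$ columns other than $j_1$, so by pigeonhole two of them share a column, contradicting the at-most-one-small-per-column fact. Hence $\max_{i,j}D_{ij}$, and therefore each $|\sigma_j(\alpha)|$, is bounded by a constant depending only on the $\lambda_i$ and $\delta$. Since $\alpha\mapsto(\sigma_1(\alpha),\ldots,\sigma_s(\alpha))$ embeds $\OR$ as a full lattice in $\mathbb{R}^s$, a bounded region contains only finitely many images, so only finitely many $\alpha$ satisfy all the divisibility conditions. The degenerate case $s=1$, where $\OR=\Z$, is immediate since $\delta$ has only finitely many divisors.

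The heart of the argument, and the step I expect to be the main obstacle to state cleanly, is the square pigeonhole coupling the $s$ divisibility conditions (rows) to the $s$ embeddings (columns): it is precisely here that having exactly $s$ distinct $\lambda_i$ is used, since $s$ forced small entries cannot fit into $s-1$ columns without a collision. The metric bookkeeping (choosing $\rho$, quantifying ``large'' and ``small,'' and checking the threshold on $D_{i_1j_1}$ at which $(2M/D_{i_1j_1})^{1/(s-1)}<\rho$) is routine, but it must be arranged so that the two pigeonhole facts genuinely conflict.
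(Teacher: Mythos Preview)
The paper does not supply its own proof of this lemma; it is quoted verbatim from Romanovskii's paper and used as a black box, so there is no in-paper argument to compare against.

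Your argument is correct. The two ingredients you isolate---the row bound $1\le\prod_j D_{ij}\le M$ coming from $|N(\alpha-\lambda_i)|\mid |N(\delta)|$, and the column separation ``at most one entry per column is $<\rho$'' coming from injectivity of each $\sigma_j$ on the distinct $\lambda_i$---combine exactly as you say. If some $D_{i_1j_1}$ exceeds a threshold $L$, then every entry of column $j_1$ exceeds $L-2C$ (with $C=\max_{i,j}|a_{ij}|$), so in each of the $s$ rows the remaining $s-1$ entries have product below $M/(L-2C)$ and hence contain an entry below $(M/(L-2C))^{1/(s-1)}$; for $L$ large this is $<\rho$, placing $s$ small entries into $s-1$ columns and forcing the collision you describe. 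The conclusion via discreteness of the Minkowski image is standard. Your remark that the matching $s=s$ between the number of $\lambda_i$ and $[K:\Q]$ is exactly what makes the pigeonhole bite is the right diagnosis: with fewer $\lambda_i$ the statement is false whenever $\OR^\times$ is infinite (take $\delta=1$), so the hypothesis is sharp and your proof uses it in the only place it can be used.
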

\begin{lemma}[Lemma 3,~\cite{Romanovskii}] \label{RomLem3:lem} Let $X$ be a finite set of elements of $\OR$ other than 0 or 1. Let $\lambda$ be a nonzero element of $\OR$ that is not a root of unity. Then there is a natural number $n$ such that $\lambda^n-\alpha\notin \OM$ for all $\alpha\in X$.\end{lemma}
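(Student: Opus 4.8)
The plan is to isolate one $\alpha\in X$ at a time. For a fixed nonzero $\alpha$ set $B_\alpha=\{n\in\N:\lambda^n-\alpha\in\OM\}$; since $X$ is finite it suffices to prove that each $B_\alpha$ is finite, because then $\bigcup_{\alpha\in X}B_\alpha$ is finite and any $n$ outside this union makes $\lambda^n-\alpha$ a non-unit simultaneously for all $\alpha\in X$. Thus the whole lemma reduces to the one-variable assertion: for fixed $\alpha\neq 0$, the element $\lambda^n-\alpha$ is a unit for only finitely many $n$.

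To handle the one-variable assertion I would reformulate membership in $\OM$ as a unit equation. If $\lambda^n-\alpha=u$ with $u\in\OM$, then dividing by $\alpha$ gives $\tfrac{\lambda^n}{\alpha}-\tfrac{u}{\alpha}=1$, and both $x:=\lambda^n/\alpha$ and $u/\alpha$ lie in the subgroup $\Gamma=\langle \OM,\lambda,\alpha\rangle$ of $\KM$, which is finitely generated (Dirichlet's unit theorem makes $\OM$ finitely generated, and we adjoin the two generators $\lambda,\alpha$). Hence every $n\in B_\alpha$ yields a solution $x-y=1$, equivalently $x+y'=1$ with $y'=-u/\alpha\in\Gamma$, inside $\Gamma\times\Gamma$.

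The engine of the argument is the finiteness theorem for unit equations: $x+y=1$ has only finitely many solutions with $x,y$ in a prescribed finitely generated subgroup of $\KM$. This forces $x=\lambda^n/\alpha$, and therefore $\lambda^n$, to take only finitely many values. At exactly this point the hypothesis that $\lambda$ is \emph{not} a root of unity becomes indispensable: it guarantees that $\lambda$ has infinite order, so distinct $n$ give distinct powers $\lambda^n$, whence finitely many values of $\lambda^n$ forces $B_\alpha$ to be finite. The hypothesis cannot be dropped: for $\lambda=-1$, $\alpha=2$ one has $\lambda^n-\alpha=-1\in\OM$ for every even $n$.

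The main obstacle is that this unit-equation finiteness is a genuinely deep Diophantine input (it rests on the subspace theorem, equivalently on bounds for linear forms in logarithms). If one prefers to remain inside the elementary toolkit of this section, the natural route is through Lemma~\ref{RomLem2:lem}: Kronecker's theorem supplies a conjugate of $\lambda$ of absolute value $>1$ (such a conjugate exists precisely because $\lambda$ is a nonzero non-root-of-unity), so the archimedean size of $\lambda^n-\alpha$ grows, while the unit condition $|N_{K/\Q}(\lambda^n-\alpha)|=1$ must then be balanced by conjugates of small absolute value; converting this balance into the finitely many simultaneous divisibilities governed by Lemma~\ref{RomLem2:lem} would pin $\lambda^n$ to a finite set without invoking the subspace theorem. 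The delicate step in either approach, and the real crux, is controlling conjugates of $\lambda$ that lie on the unit circle without being roots of unity, since these are the only ones capable of keeping $|N_{K/\Q}(\lambda^n-\alpha)|$ equal to $1$ for infinitely many $n$.
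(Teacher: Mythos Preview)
The paper does not prove this lemma at all: it is quoted verbatim from Romanovski\u{\i} and used as a black box, so there is no in-paper argument to compare against.

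Your principal argument is correct. Fixing $\alpha\in X\setminus\{0\}$, rewriting $\lambda^{n}-\alpha=u\in\OM$ as $\dfrac{\lambda^{n}}{\alpha}+\Bigl(-\dfrac{u}{\alpha}\Bigr)=1$ with both summands in the finitely generated group $\Gamma=\langle \OM,\lambda,\alpha\rangle\leq K^{\times}$, and then invoking the finiteness theorem for the $S$-unit equation $x+y=1$, is a valid deduction; injectivity of $n\mapsto\lambda^{n}$ (from the ``not a root of unity'' hypothesis) then forces $B_\alpha$ to be finite, and the union over the finite set $X$ finishes the job. This is a genuine proof, though it imports a deep Diophantine theorem that is far heavier than anything else used in this section of the paper or in Romanovski\u{\i}'s 1981 article.

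Your second paragraph, the attempted ``elementary'' route through Kronecker's theorem and norm growth, is not a proof but a heuristic. The specific difficulty you flag---conjugates of $\lambda$ on the unit circle that are not roots of unity---is real and is precisely what blocks a naive size argument; you do not resolve it, and the suggested reduction to Lemma~\ref{RomLem2:lem} is not carried out. If you want an argument closer in spirit to Romanovski\u{\i}'s toolkit (and one that does not rely on the subspace theorem), you should either consult his original proof or replace this paragraph with an explicit reduction; as written it should be labelled a remark rather than part of the proof.
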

We say that two elements $\alpha$ and $\beta$ of $\OR$ are \emph{associated} with each other if there is a unit $\gamma\in \OR$ such that $\alpha=\gamma\beta$.
\begin{lemma}[Lemma 4,~\cite{Romanovskii}]\label{RomLem4:lem} There is a natural number $r$ such that if $\epsilon$ is a nonzero element of $\OR$ and not a root of unity and if $\lambda$ is sufficiently large power of it, then for any natural number $m$ and $n$ the fact that $\lambda^m - 1$ and $\lambda^n-1$, $\lambda^{2m} - 1$ and $\lambda^{2n}-1$, \ldots, $\lambda^{rm} - 1$ and $\lambda^{rn}-1$ are pairwise associated in $\OR$ implies $m=n$.\end{lemma}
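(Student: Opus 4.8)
The plan is to convert the relation ``associated'' into equality of principal ideals and then to reduce the whole statement, by an elementary Euclidean identity, to a single assertion about a cyclotomic cofactor being a non-unit. Write $\alpha \sim \beta$ for ``$\alpha$ and $\beta$ are associated'', i.e. $(\alpha) = (\beta)$ as ideals of $\OR$. For exponents $a > b$ one has $(\lambda^a-1,\lambda^b-1) = (\lambda^{a-b}-1,\lambda^b-1)$, since $\lambda^a - 1 - \lambda^{a-b}(\lambda^b-1) = \lambda^{a-b}-1$; iterating Euclid's algorithm on the exponents gives $(\lambda^a-1,\lambda^b-1) = (\lambda^{\gcd(a,b)}-1)$ in any commutative ring. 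Hence, setting $d = \gcd(m,n)$, the hypothesis $(\lambda^{km}-1) = (\lambda^{kn}-1) =: I$ forces $I = I + I = (\lambda^{km}-1,\lambda^{kn}-1) = (\lambda^{kd}-1)$, so that $\lambda^{km}-1 \sim \lambda^{kd}-1$ for every $k = 1,\ldots,r$. Since $d \mid m$, put $t = m/d$ and $\mu = \lambda^d$; then this reads $\frac{\mu^{kt}-1}{\mu^k-1} = 1 + \mu^k + \cdots + \mu^{k(t-1)} \in \OM$. If I can show that for a suitable $r$ (depending only on the field) and $\lambda$ a large enough power of $\epsilon$, the case $t \geq 2$ forces this cofactor to be a non-unit for at least one $k \leq r$, then $t = 1$, i.e. $m \mid n$; by symmetry $n \mid m$, whence $m = n$.

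Next I would record the dominant archimedean behaviour that makes the claim plausible. As $\mu$ (equivalently $\lambda$, $\epsilon$) is not a root of unity, Kronecker's theorem supplies a conjugate $\mu_i$ with $|\mu_i| > 1$, and because $\mu$ is a large power this absolute value is large. At that embedding the cofactor $\sum_{j=0}^{t-1}\mu^{kj}$ has absolute value comparable to $|\mu_i|^{k(t-1)} \gg 1$. Writing $g(M) = \log|N_{K/\Q}(\mu^M-1)|$ and grouping the conjugates by $|\mu_i| >, =, < 1$, one obtains $g(M) = M\,\Sigma^+ + \mathcal{U}(M) + o(1)$, where $\Sigma^+ = \sum_{|\mu_i|>1}\log|\mu_i| > 0$ and $\mathcal{U}(M) = \sum_{|\mu_i|=1}\log|\mu_i^M-1|$ collects the unit-circle conjugates. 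A unit has norm $\pm 1$, so the assumption that every cofactor is a unit gives $g(kt) = g(k)$ for each $k \leq r$; summing these equalities over $k = 1,\ldots,r$ isolates a leading term $(t-1)\,\Sigma^+\,\tfrac{r(r+1)}{2}$, which grows with $\Sigma^+$ and with $r$ and which must be absorbed entirely by the unit-circle contribution $\sum_{k}\bigl(\mathcal{U}(k) - \mathcal{U}(kt)\bigr)$.

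The crux, and the reason the parameter $r$ is unavoidable, is the control of $\mathcal{U}$. Each factor $|\mu_i^{M}-1|$ is at most $2$ but can be arbitrarily small when $\mu_i^{M}$ is close to $1$, so no single $k$ is automatically safe: a conjugate on the unit circle may accidentally have $\mu_i^{kt}$ extremely near $1$, which is precisely what would let the large factor $|\mu_i|^{k(t-1)}$ be cancelled and keep the norm equal to $1$. The point to exploit is that $\mu^{kt}-1$ is a nonzero algebraic integer, so $\prod_i|\mu_i^{kt}-1| \geq 1$ forces compensating largeness elsewhere, and that such near-coincidences cannot persist for all $k = 1,\ldots,r$ across the finitely many unit-circle conjugates at once; choosing $r$ larger than a threshold determined by the number of archimedean places (hence by $s = \deg(K/\Q)$) makes the required cancellation fail, so some $\frac{\mu^{kt}-1}{\mu^k-1}$ has norm exceeding $1$ and is a genuine non-unit. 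Here Lemma~\ref{RomLem3:lem} provides the clean non-unit statement for shifted powers $\mu^N - \alpha$, and Lemma~\ref{RomLem2:lem} bounds the finitely many simultaneous-divisibility coincidences that must be excluded. I expect this archimedean fine analysis of the unit-circle conjugates to be the main obstacle; the ideal-theoretic reduction and the size estimate around it are essentially formal.
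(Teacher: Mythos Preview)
The paper does not prove this lemma at all: it is quoted as Lemma~4 of \cite{Romanovskii} and used as a black box, so there is no in-paper argument to compare your attempt against.

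As for the proposal itself, the ideal-theoretic reduction in your first paragraph is clean and correct: the Euclidean identity on exponents does give $(\lambda^{km}-1)=(\lambda^{kd}-1)$ with $d=\gcd(m,n)$, and the problem legitimately collapses to showing that for $t\ge 2$ some cyclotomic quotient $(\mu^{kt}-1)/(\mu^{k}-1)$ fails to be a unit for at least one $k\le r$. But the proof is explicitly unfinished at the decisive point. You yourself flag the control of the unit-circle contribution $\mathcal{U}(M)=\sum_{|\mu_i|=1}\log|\mu_i^{M}-1|$ as ``the main obstacle'' and then only assert that ``such near-coincidences cannot persist for all $k=1,\ldots,r$''. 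That assertion is exactly what needs proof: bounding how small $|\mu_i^{M}-1|$ can be, and for how many $M$ simultaneously, is a genuine Diophantine-approximation problem (of Baker/linear-forms-in-logarithms type), and nothing in your sketch supplies such a bound. The closing appeal to Lemmas~\ref{RomLem2:lem} and~\ref{RomLem3:lem} does not help here: those are divisibility statements about finitely many shifted values $\alpha-\lambda_j$ in $\OR$, not archimedean size estimates for $|\mu_i^{M}-1|$, and you give no mechanism by which they would control the norm calculation you set up. So the reduction is sound, but the core analytic step is a gap rather than a proof.
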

\begin{lemma}\label{B-def:lemma} There is a non-trivial torsion-free definable subgroup $B$ of $\OM$ such that every non-trivial definable subgroup of $B$ is of finite index in $B$.\end{lemma}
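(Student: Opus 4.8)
The plan is to obtain $B$ not by writing down an explicit generator, but by choosing a definable torsion-free subgroup of $\OM$ of \emph{minimal} torsion-free rank; minimality will then force the finite-index property automatically. First I would record the structural input from Dirichlet's unit theorem: since $\OR$ is the ring of integers of a number field, $\OM \cong \mu \times \Z^r$, where $\mu$ is the finite (cyclic) group of roots of unity in $\OR$ and $r$ is the unit rank, and $r \geq 1$ because $\OM$ is infinite. Put $m = |\mu|$. Then the set of $m$-th powers $(\OM)^m = \{u^m : u \in \OM\}$ is definable (by $\exists y\,(x = y^m)$), is a subgroup, and is torsion-free: any torsion element has the form $u^m$ with $u^{mk}=1$ for some $k$, forcing $u \in \mu$ and hence $u^m = 1$. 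Moreover $(\OM)^m \cong (m\Z)^r \cong \Z^r$, of torsion-free rank $r \geq 1$.

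Next I would set up the minimality argument. Every torsion-free subgroup $C \leq \OM$ meets $\mu$ trivially, so it embeds into $\OM/\mu \cong \Z^r$ and is therefore free abelian of rank at most $r$. Consequently the set of torsion-free ranks of the \emph{definable} torsion-free subgroups of positive rank of $\OM$ is a non-empty subset of $\{1,\ldots,r\}$ (it contains $r$, witnessed by $(\OM)^m$), and so has a least element $k \geq 1$. I would then fix a definable torsion-free subgroup $B \leq \OM$ of rank exactly $k$. By construction $B$ is non-trivial, torsion-free, and definable, so it only remains to verify the finite-index condition.

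Finally I would check that condition. Let $C \leq B$ be any non-trivial definable subgroup. Since $C$ lies inside the torsion-free group $B$ it is itself torsion-free, and being non-trivial it has positive rank; it is also a definable torsion-free subgroup of $\OM$, so $\mathrm{rank}(C) \geq k$ by minimality of $k$. On the other hand $C \leq B$ gives $\mathrm{rank}(C) \leq \mathrm{rank}(B) = k$. Hence $\mathrm{rank}(C) = \mathrm{rank}(B) = k$, and a full-rank subgroup of a finitely generated free abelian group has finite index; thus $[B:C] < \infty$, which is exactly the assertion of the lemma.

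The argument is essentially soft, so there is no single hard computation. The one point that must be handled carefully is the very first step — producing at least one definable torsion-free subgroup of positive rank and guaranteeing that the ranks are bounded — both of which rest on Dirichlet's theorem (finiteness of $\mu$ and of $r$). The only real subtlety I anticipate is making sure the witness $C$ in the last paragraph genuinely falls under the scope of the minimality (i.e. that it is again a definable torsion-free subgroup of $\OM$, not merely of $B$) and that equality of ranks yields finite index rather than only equal rank. (If one instead prefers an explicit $B$, Lemma~\ref{x^Z:inter:lem} already supplies a definable subgroup $\lambda^\Z \cong \Z$, for which the conclusion is immediate since every non-trivial subgroup of $\Z$ has finite index; but the minimal-rank construction above has the advantage of not invoking the interpretation of arithmetic.)
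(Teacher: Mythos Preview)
Your argument is correct and is essentially the paper's own proof: both start from Dirichlet's theorem to get a definable torsion-free subgroup of finite rank (the paper takes $(\OM)^k$, you take $(\OM)^m$ with $m=|\mu|$), and both then descend on rank --- the paper phrases it as an iterated ``if not, pass to a definable subgroup of infinite index, hence strictly smaller rank'' process, while you phrase it as choosing a definable torsion-free subgroup of minimal positive rank and observing that any non-trivial definable subgroup must then have full rank. One small caveat: your parenthetical suggestion at the end to use Lemma~\ref{x^Z:inter:lem} as an alternative source of $B$ is circular, since the proof of that lemma invokes the present lemma.
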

\begin{proof}
By Dirichlet's units theorem $\OM$ is a finitely generated abelian group. So there is natural number $k$ for which $B_1=(\OM)^k$ is torsion-free. Either $B_1$ satisfies the asked condition or there is a non-trivial definable subgroup $B_2$ of infinite index in $B_1$. Since the rank of $B_2$ as a free abelian group has to be strictly less than that of $B_1$ the process has stop at some stage. Then we obtain the desired subgroup $B$.
\end{proof} 
 
\emph{Proof of Lemma~\ref{x^Z:inter:lem}.}  Note that since $\OM$ is definable in $\OR$, the metabelian polycyclic group $G_1= \OR\rtimes\OM$ is interpretable in $\OR$ where the action is just the right multiplication. We work with a definable subgroup $G_2=\OR\rtimes B$ where $B\leq \OM$ is the subgroup obtained in Lemma~\ref{B-def:lemma}. By $\alpha|\beta$ we mean $\beta$ is divisible by $\alpha$ in $\OR$. Here we have the benefit of having the whole expressive power of the first-order theory of rings at our disposal, which makes this proof simpler than the corresponding one in \cite{Romanovskii}. So for a $\lambda$ as in Lemma~\ref{RomLem4:lem} define the predicate $\Psi$ by
\begin{align*}
\Psi(\alpha,\beta,\delta,a) &\Leftrightarrow \alpha,\beta,\delta \in  B \wedge a\in \OR \wedge \alpha,\delta\neq 1\\
&\wedge \alpha\lambda-1, \ldots, \alpha\lambda^s-1|\delta-1 \wedge 1+(\beta-1)\alpha|a.\end{align*}
Clearly this can be transformed to a first-order formula of the language of rings. Let us set 
\[M=\{\lambda^n: n\in \N\}.\] Now we claim that the equivalence
\[\gamma\in M \Leftrightarrow \exists \beta, \delta, a \{\Psi(\lambda,\beta,\delta,a)\wedge \forall \alpha[\Psi(\alpha,\beta,\delta,a)\to \alpha=\gamma\vee \Psi(\alpha\lambda,\beta,\delta,a)]\},\]
holds. In particular $M$ is a definable subset of $G$. So suppose, to get a contradiction, that for certain $\beta$, $\delta$ and $a$ the right hand side of the equivalence is valid but $\gamma\notin M$. Therefore the inductive form of the formula on the right hand side of the equivalence implies that for every $\alpha\in M$ the formula $\Psi(\alpha,\beta,\delta,a)$ holds. Hence for infinitely many distinct $\alpha$, $\alpha\lambda-1, \ldots \alpha\lambda^s-1|\delta-1$, contradicting Lemma~\ref{RomLem2:lem}. So indeed $\gamma\in M$.

Conversely, let $\gamma= \lambda^m$, where $m$ is a natural number. We show that the right-hand side of the equivalence holds. We set $\delta=\lambda^{(m+s)!}$. Let $X$ be the collection of all elements $\alpha\neq 1$ in $B$ such that $\alpha\lambda-1, \ldots, \alpha\lambda^s-1$ divide $\delta-1$. The set $X$ contains $\lambda, \ldots ,\lambda^m$. By Lemma~\ref{RomLem2:lem} $X$ is a finite set. Let $\alpha, \xi \in X$, $\alpha\neq\xi$. Now $B_{\alpha,\xi}=\{\mu\in B:\alpha-\xi|\mu-1\}$ is a non-trivial definable subgroup of $B$. Definability is clear. Obviously if $\mu \in B_{\alpha,\xi}$ then $\alpha-\xi|\mu^{-1}(\mu-1)$, which implies that $\mu^{-1}\in B_{\alpha,\xi}$. To prove that $B_{\alpha,\xi}$ is closed under product one may use the identity $(\mu_1-1)(\mu_2-1)=\mu_1\mu_2-1-(\mu_1-1)-(\mu_2-1)$. By Lemma~\ref{B-def:lemma} $B_{\alpha,\xi}$ is a subgroup of finite index in $B$. Since $X$ is finite the intersection of all $B_{\alpha,\xi}$,$\alpha,\xi\in X$, $\alpha\neq \xi $ is a subgroup of finite index in $B$. Let $\beta_1$ be a nontrivial element of this intersection. By Lemma~\ref{RomLem3:lem} there is a natural number $n$ such that the elements $\beta_1+\alpha^{-1}-1$ $(\alpha \in X)$ are not invertible in $\OR$. Note that $1+(\beta_1^n-1)\alpha=\alpha(\beta_1^n+\alpha^{-1}-1)$. Let $\beta=\beta_1^n$ and
\[a=[1+(\beta-1)\lambda]\cdots [1+(\beta-1)\lambda^m].\]
Let $\alpha\in X$, $\alpha\neq \lambda, \ldots , \lambda^m$. Then for every $i=1, \ldots, m$ the algebraic integers $1+(\beta-1)\lambda^i$ and $1+(\beta-1)\alpha$ are coprime in $\OR$. For if a prime divisor $d$ divides these numbers, then it divides their difference $(\beta-1)(\lambda^i-\alpha)$. Since $\lambda^i-\alpha$ divides $\beta-1$, $d$ divides $\beta-1$. From the condition that $d$ divides $1+(\beta-1)\alpha$ it follows that $d$ divides $1$, which contradicts the fact that $d$ is a prime. This show that $1+(\beta-1)\alpha$ is not invertible in $\OR$ and is prime to the elements $1+(\beta-1)\lambda^i$, $i=1,\ldots,m$. So this element does not divide $a$ and consequently $\Psi(\alpha,\beta,\delta,a)$ does not hold. On the other hand $\Psi(\lambda^i,\beta,\delta,a)$ holds for $i=1, \ldots m$. It is clear now that the right hand side of the equivalence holds for the chosen $\beta$, $\delta$ and $a$. This concludes the definability of $M$. Now one copies the rest of the paragraph 2.3 on Page 131 from~\cite{Romanovskii} to define  an arithmetic structure on $M=\lambda^\N$. Next we can define $M^\times=\lambda^\Z$ in $\OM$ and show that it has a definable structure isomorphic to the ring of integers $\Z$. This finishes the proof of the lemma.\qed

\begin{proposition} \label{extneq0:prop}Assume $\OR$ is the ring of integers of a number field. Then there is a countable non-standard model $R$ of $Th(\OR)$ such that $Ext(\RM,\RM)\neq 1$.
\end{proposition}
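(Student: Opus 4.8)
The plan is to follow the template of the rational case, Lemma~\ref{Z-inter-Q*:lem}: produce the pathology in the \emph{additive} group of a nonstandard model of $Th(\Z)$ and then transport it, through a definable exponential map, into the \emph{multiplicative} group $\RM$. The two ingredients are the interpretation of $\Z$ inside $\OR$ (Lemma~\ref{x^Z:inter:lem}) and Belegradek's model. Concretely, I would fix, as in Lemma~\ref{x^Z:inter:lem}, a unit $\lambda \in \OM$ of infinite order for which $\lambda^\Z$ is definable and carries, uniformly with respect to $Th(\OR)$, a ring structure isomorphic to $\Z$; the additive group of this interpreted ring is exactly $(\lambda^\Z,\cdot)$ via $a \mapsto \lambda^a$.

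First I would build the model. Since $\OR$ is a free $\Z$-module of rank $s$ with multiplication given by $\Z$-polynomials in the coordinates with respect to an integral basis, $\OR$ is interpretable in $\Z$; together with Lemma~\ref{x^Z:inter:lem} this makes $\OR$ and $\Z$ bi-interpretable. Let $R_0$ be Belegradek's countable ring with $R_0 \equiv \Z$ and $R_0^+ = A \oplus D$, where $A \neq 0$ is reduced and $D \neq 0$ is countable torsion-free divisible (\cite{beleg99}, Proposition 2.2.16). Applying the interpretation of $\OR$ in $\Z$ to $R_0$ yields a countable ring $R := \OR(R_0)$ with $R \equiv \OR$, and $R$ is nonstandard because $R_0$ is. By bi-interpretability the copy of $\Z$ recovered inside $R$ via Lemma~\ref{x^Z:inter:lem} is isomorphic to $R_0$, so $a \mapsto \lambda^a$ is a definable isomorphism of $R_0^+$ onto a subgroup $\Lambda \le \RM$. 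In particular the image of $D$ is a nonzero torsion-free divisible subgroup of $\RM$; since $T(\RM)$ is finite (Dirichlet's theorem, transferred along $R \equiv \OR$), the maximal divisible subgroup $\mathrm{div}(\RM)$ is a nonzero $\Q$-vector space.

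It remains to supply a nontrivial reduced part and compute $Ext$. Because $\OM \cong \mu \times \Z^{r}$ is not divisible, the group-sentence $\exists x\,\forall y\,(y^2 \neq x)$ holds in $\OM$, hence (as $\OM$ is definable in $\OR$ and $R \equiv \OR$) also in $\RM$; thus $\RM$ is not divisible and its reduced part $\RM_{\mathrm{red}}$ is nontrivial. Every abelian group splits off its maximal divisible subgroup, so $\RM = \mathrm{div}(\RM) \oplus \RM_{\mathrm{red}}$ with $\mathrm{div}(\RM)$ a nonzero torsion-free divisible group and $\RM_{\mathrm{red}}$ a nonzero reduced group. By (\cite{fox}, 54.5) we obtain $Ext(\mathrm{div}(\RM), \RM_{\mathrm{red}}) \neq 0$, and since this is a direct factor of $Ext(\RM, \RM)$ (as $Ext$ distributes over the finite direct sum in both arguments) we conclude $Ext(\RM,\RM)\neq 1$.

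The step I expect to be the main obstacle is justifying that the integers interpreted inside $R = \OR(R_0)$ genuinely reconstruct $R_0$ rather than merely some model elementarily equivalent to it; this is precisely the bi-interpretability of $\OR$ with $\Z$ (the round trip is definably isomorphic to the identity), and it is essential, because ``having a nontrivial divisible subgroup'' is not preserved under $\equiv$. If one prefers to avoid this bookkeeping, the same divisible subgroup can be produced directly by compactness: the type over $Th(\OR)$ asserting that $a_0 \in \RM$ has infinite order and that there are $a_k \in \RM$ with $a_{k+1}^{k+1} = a_k$ for all $k$ is finitely satisfiable in $\OM$ (realize the fragment up to level $N$ by taking $a_N=\epsilon$ a fundamental unit and $a_0 = \epsilon^{N!}$), so a countable model $R \equiv \OR$ realizes it and $\langle a_k : k \ge 0\rangle \cong \Q \le \RM$ is a nonzero divisible subgroup; the $Ext$ computation of the previous paragraph then applies verbatim.
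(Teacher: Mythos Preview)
Your proposal is correct and follows the same overall strategy as the paper: use Lemma~\ref{x^Z:inter:lem} to locate inside $\RM$ a subgroup carrying a ring structure elementarily equivalent to $\Z$, extract from its additive group a nonzero torsion-free divisible direct summand, and then invoke (\cite{fox}, 54.5) against the (nonzero) reduced part of $\RM$ to conclude $Ext(\RM,\RM)\neq 1$. The final $Ext$ computation you give matches the paper's (which simply points back to the end of Lemma~\ref{Z-inter-Q*:lem}).

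The difference is in how $R$ is produced. The paper takes \emph{any} countable non-standard model $R$ of $Th(\OR)$ and observes that the interpreted copy of $\Z$ inside $R$ (via the formulas of Lemma~\ref{x^Z:inter:lem}) is then itself a countable non-standard model of $Th(\Z)$; since every such model has additive group of the form $A\oplus D$ with $A$ reduced nonzero and $D$ torsion-free divisible nonzero, the argument proceeds without ever needing to control which non-standard model of $\Z$ one lands on. Your route instead fixes Belegradek's specific $R_0$ first, manufactures $R=\OR(R_0)$ from it, and then appeals to bi-interpretability to certify that the round trip returns $R_0$. This works, but the bi-interpretability step you flag as the ``main obstacle'' is avoidable: the paper only needs the one-way interpretation of Lemma~\ref{x^Z:inter:lem}, plus the (easy) observation that if $R\not\cong\OR$ then the interpreted integers cannot be standard either, since $\OR$ is a free module of finite rank over its prime subring with first-order describable structure constants. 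Your compactness alternative in the last paragraph is a clean independent route (not in the paper) that also sidesteps the bi-interpretability bookkeeping; the indexing $a_{k+1}^{k+1}=a_k$ makes the $k=0$ relation vacuous, but the point stands and $\langle a_k:k\ge 0\rangle=\bigcup_N\langle a_N\rangle\cong\Q$ as intended.
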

\begin{proof}Again by Dirichlet units theorem $\OM$ is a finitely generated abelian group. So the maximal torsion subgroup $T(\OM)$ of is definable in $\OM$ as $\{x\in \OM: x^k=1\}$, $k$ the exponent of $B(\OM)$. Let $R$ be any countable non-standard model of $\OR$. Then $T(\RM)\cong T(\OM)$ where $\RM/T(\RM)$ is torsion-free. So by Lemma~\ref{tbytf:lem}, $\RM\cong T(\RM) \times B(\RM)$. Recall that there exists a formula $\Phi(\lambda,x)$ of $L$ that defines $M(\lambda)=\lambda^\Z$ and a ring structure on it in $\OM$. We note that $\lambda$ has to satisfy conditions of Lemma~\ref{RomLem4:lem}. Indeed we only need just pick a sufficiently large power, say a fixed $n$, of any non-trivial element of $B(\OM)$. As mentioned above $B(\OM)$ is interpretable in $\OR$ and the same formulas interpret $B(\RM)$ in $R$. So the formula $\exists \lambda (\lambda\in B(\OM) \wedge \Phi(\lambda,x))$ will interpret a subgroup $M^*$ of $\RM$ in $R$ which also has a ring structure.  As such $M^*$ is a countable non-standard model of $\Z$. The rest of the argument is similar to the final part of the proof of Lemma~\ref{Z-inter-Q*:lem}.     
\end{proof}

\emph{Proof of Theorem~\ref{charthm-ringofinter:thm}, Part 2.} The Proof is similar to the proof of Theorem~\ref{elemnotiso-Q:thm} but now combining and Lemma~\ref{abdef-notiso:lem} and Proposition~\ref{extneq0:prop}.  \qed

\end{document}